\newcommand{\E}{\mathcal{K}}
\newcommand{\mbp}{\mathbf{p}}
\newcommand{\F}{\mathcal{F}}%
\newcommand{\sw}[1]{{\color{blue} #1}}
\begin{document}
\title{Numerical schemes for a multi-species quantum BGK model\thanks{Received date, and accepted date (The correct dates will be entered by the editor).}}


\author{Gi-Chan Bae \thanks{Research institute of Mathematics, Seoul National University, Seoul 08826, Republic of Korea, (gcbae02@snu.ac.kr).} \and Marlies Pirner \thanks{University of M\"{u}nster, Mathematics M\"{u}nster, Einsteinstrasse 62 48149, M\"{u}nster, Germany, (marlies.pirner@uni-muenster.de).} \and Sandra Warnecke \thanks{University of W\"{u}rzburg, Emil-Fischer-Straße 40, 97074 W\"{u}rzburg, Germany, (sandra.warnecke@uni-wuerzburg.de).}}

         \pagestyle{myheadings} \markboth{Numerical schemes for a multi-species quantum BGK model}{Gi-Chan Bae, Marlies Pirner, Sandra Warnecke} \maketitle

          \begin{abstract}
               This work is devoted to the numerical implementation of the quantum Bhatnagar-Gross-Krook (BGK) model for gas mixtures consisting of classical and quantum particles (fermions, bosons). We consider the model proposed by Bae, Klingenberg, Pirner, and Yun in 2021 and implement an Implicit-Explicit (IMEX) scheme due to the stiffness of the collision operator.
               A major obstacle is updating the parameters of quantum local equilibrium, which requires computing by inverting the relation between density and energy at every grid point in space and time. We address this difficulty by using the Lagrange multiplier method to minimize a potential function subject to constraints defined by specific moment equalities.
               Moreover, we analyze the convergence of mean velocity and temperature between the species both analytically and numerically. When a quantum component is included, we observe that the converging quantity is physical temperature, not the kinetic temperature. This differs from the mixture of classical species.
          \end{abstract}
\begin{keywords}  
multi-fluid mixture; kinetic model; BGK approximation; quantum description; entropy minimization; numerical scheme
\end{keywords}

 \begin{AMS} 35Q20; 65M70; 76T17; 76Y05; 82C10; 82C40
\end{AMS}

\section{Introduction}
In a kinetic description, the state of a dilute gas or plasma is given by a distribution function that prescribes the density of particles at each point in position-momentum phase space. The evolution of this distribution function is governed by the quantum Boltzmann equation which describes a balance of particle advection and binary collisions. The Boltzmann collision operator representing the interaction between particles preserves collision invariants (mass, momentum, and energy) and dissipates the mathematical entropy of the system. Unfortunately, the expense of evaluating this operator can be prohibitive. Indeed, its evaluation requires the calculation of a five-dimensional integral at every point in phase-space. Thus even with fast spectral methods \cite{mouhot2006fast,pareschi2000numerical,gamba2009spectral,gamba2017fast}, the collision operator is typically the dominant part of a kinetic calculation. The quantum modification of the celebrated Boltzmann equation was made in \cite{61,62,Nordheim} to incorporate the quantum effect that cannot be neglected for light molecules (such as Helium) at low temperature. Quantum Boltzmann equation is now fruitfully employed not just for low temperature gases, but in various circumstances such as scattering problem in solid \cite{Ashcroft,FJ} and electrons on energy band structure in semiconductor \cite{Jungel}. 

In the classical case, the Bhatnagar-Gross-Krook (BGK) operator is a widely used surrogate for the Boltzmann operator, modeling collisions through a simple relaxation mechanism. This simplification provides significant computational advantages while maintaining the conservation and entropy dissipation properties of the Boltzmann operator. Similarly, quantum BGK models are widely used in place of the quantum Boltzmann equation \cite{Ashcroft,FJ,Jungel,Madelung,RS,MRS}. The mathematical analysis of the mixture or quantum model is developed in various ways \cite{BaeYun,MR4064205,CCmix,velocity-dependent,LuF1,LuB1,velocity-dependent-theory}.


The BGK-type modeling for classical mixtures has been developed in various ways to accurately approximate the moments of the collision operator in \cite{AndriesAokiPerthame2002,Bobylev,Brull_2012,Garzo1989,Greene,gross_krook1956,hamel1965,Sofonea2001}. However, in the quantum case, unlike the classical BGK model for gas mixtures, the equilibrium coefficients of the local equilibrium for quantum multi-species gases are defined through highly nonlinear relations that are not explicitly solvable. A sufficient condition is proven in \cite{Quantum}, guaranteeing the existence of equilibrium coefficients so that the model shares the same conservation laws and H-theorem as the quantum Boltzmann equation.

In this paper, we present a numerical implementation of the quantum multi-species BGK model developed in \cite{Quantum}. When solving the quantum mixture BGK model numerically, a major obstacle is updating the parameters $(a_k,b_k,c_k)$ from the known density, momentum, and energy $(n_k,P_k,E_k)$ in the local quantum equilibrium for the $k$-th species with particle mass $m_k$:
\begin{align}\label{Q-Max}
\mathcal{K}_{k}(t,x,p)=\frac{1}{e^{m_ka_k\big|\frac{p}{m_k}-b_k\big|^2+c_k}+\tau_k},
\end{align}
for $\tau_k=+1,-1,0$ for fermions, bosons, and classical particles, respectively. To compute $c_k$, we need to take the inverse of a specific function (see \eqref{ci}) which becomes even more complicated in the multi-species case (refer to Theorem 2.1 in \cite{Quantum}). Despite this, after discretizing for time, updating the equilibrium from the previous step yields an equation for moments, which serves as the constraint for the Lagrange multiplier method in Section \ref{sec:generaltime}. Thus, we can compute the parameters without inverting the function. \newline
Our next objective is to identify the convergence of mean velocity and temperature between the species in the space-homogeneous case. We denote the kinetic temperature $T_k$ and physical temperature $\vartheta_k$ for the $k$-th species as follows: 
\begin{align*}
T_k = \frac{2}{3}\left(\frac{E_k}{ n_k}- \frac{1}{2} \frac{|P_k|^2}{m_k n_k^2}\right), \quad \vartheta_k = \frac{1}{2a_k},
\end{align*}
where $(n_k,P_k,E_k)$ are the macroscopic fields specified in \eqref{macros} and $a_k$ is specified in \eqref{Q-Max}. We prove that the velocities $\frac{P_k}{N_k}$ converge to each other exponentially fast. 
However, while the temperature $T_k$ also converges exponentially, it does so with some deviation (see Section \ref{sec:macro-equations}).
By numerical simulation in Section \ref{sec:results}, we illustrate that the converging quantity is the physical temperature, not the kinetic temperature, when a quantum component is included.

The remainder of this paper is organized as follows.
In Section \ref{sec:model}, 
we recall the multi-species quantum  BGK model from \cite{Quantum} for fermion-fermion, fermion-boson and boson-boson mixtures and its main important properties. 
In section \ref{sec:macro-equations}, we prove convergence rates of \textcolor{black}{mean velocities to common values and of kinetic temperatures with some deviation, respectively,} in the space-homogeneous case, which we will verify numerically later.
In Section \ref{sec:time}, we present the first- and second-order implicit-explicit time discretizations that are used in the paper.  We also introduce an optimization-based approach for the implicit evaluation of the BGK operator.  In Section \ref{sec:space}, we describe the space discretization.
In Section \ref{sec:discreteproperties}, we verify some structure preserving properties of the semi-discrete scheme.  In Section \ref{sec:velocity}, we introduce the momentum discretization and summarize the numerical implementation of the optimization algorithm introduced in Section \ref{sec:time}.
In Section \ref{sec:results}, we provide an array of numerical results that illustrate the properties of our scheme. 

\section{The multi-species quantum BGK model}
\label{sec:model}
We consider two distribution functions $f_1=f_1(x,p,t) \geq 0$ and $f_2=f_2(x,p,t) \geq 0$ for species with masses $m_1>0$ and $m_2>0$, respectively, with the phase space variables (position and momentum) $x \in \Omega \subset \mathbb{R}^3$ and $p \in \mathbb{R}^3$ and time $t\geq 0$. To be as general as possible, we generalize the quantum mixture BGK model in \cite{Quantum} describing a mixture of bosons and fermions to the mixture of gases including the interaction of quantum-classical particles:
\begin{align}
\begin{split}
\partial_t f_1+ \frac{p}{m_1} \cdot \nabla_x f_1  = \nu_{11}n_1(\mathcal{K}_{11}-f_1) + \nu_{12}n_2(\mathcal{K}_{12}-f_1), \\
    \partial_t f_2 + \frac{p}{m_2} \cdot \nabla_x f_2 = \nu_{22}n_2(\mathcal{K}_{22}-f_2) + \nu_{21}n_1(\mathcal{K}_{21}-f_2),
    \end{split}
    \label{BGK_quantum}
\end{align}
where $\mathcal{K}_{ij}$ is the local equilibrium describing the interactions of $i$-th and $j$-th component and $\nu_{ij} n_j$ for $ i,j=1,2$ are the collision frequencies. In the following, we always assume $\nu_{12} n_2= \nu_{21} n_1$ as it is also done for example in \cite{Pirner}. More explicitly, the local equilibria read 
for fermion $\tau=+1$, for boson $\tau=-1$, and for classical particle $\tau=0$:
\begin{align}
\begin{split}
\mathcal{K}_{11}=\frac{1}{e^{m_1a_1\big|\frac{p}{m_1}-b_1\big|^2+c_1}+\tau},\quad
\mathcal{K}_{12}=\frac{1}{e^{m_1a\big|\frac{p}{m_1}-b\big|^2+c_{12}}+\tau}, \cr
\mathcal{K}_{22}=\frac{1}{e^{m_2a_2\big|\frac{p}{m_2}-b_2\big|^2+c_2}+\tau'}, \quad
\mathcal{K}_{21}=\frac{1}{e^{m_2a\big|\frac{p}{m_2}-b\big|^2+c_{21}}+\tau'}.
\label{parameters}
\end{split}
\end{align}
We denote $\mathcal{K}$ as Fermi-Dirac distribution, Bose-Einstein distribution and Maxwellian for the case $\tau=+1,-1,0$, respectively. The equilibrium parameters $(a_i,b_i,c_i)$ and $(a,b,c_{12},c_{21})$ will be determined uniquely in a way such that the conservation laws and the entropy principle are satisfied.
Note that the model includes the following cases depending on the types of the particles
\begin{align*}
	(\tau,\tau')=\left\{
	\begin{array}{ll}
		(+1,+1)& \text{(fermion-fermion)}\cr
		(-1,-1)&\mbox{(boson-boson)}\cr
		(+1,-1)&\mbox{(fermion-boson)}\cr
		(+1,~0~)& \mbox{(fermion-classical)} \cr
		(~0,-1~)& \mbox{(classical-boson)} \cr
		(~0~,~0~)& \mbox{(classical-classical)}
		\end{array}\right.
\end{align*}
We define the number density of particles $n_i$, momentum $P_i$, energy $E_i$ of each species as
\begin{align}\label{macros}
n_i=\int_{\mathbb{R}^3}f_idp,\quad P_i=\int_{\mathbb{R}^3}f_ip ~dp, \quad E_i=\int_{\mathbb{R}^3}f_i\frac{|p|^2}{2m_i}dp,
\end{align}
and $m_in_i=N_i$.
The parameters $(a_i,b_i,c_i)$ in $\mathcal{K}_{ii}$ 
%
and $(a,b, c_{12}, c_{21})$ of $\mathcal{K}_{ij}$ for $(ij)=(12),(21)$ 
can be chosen to satisfy conservation of mass, momentum and energy in the intra-species interactions
\begin{align}\label{conserv 1}
	\int_{\mathbb{R}^3}\mathcal{K}_{ii}dp=n_i, \quad \int_{\mathbb{R}^3}\mathcal{K}_{ii}p ~dp=P_i, \quad \int_{\mathbb{R}^3}\mathcal{K}_{ii}\frac{|p|^2}{2m_i}dp=E_i, \quad (i=1,2).
\end{align}
and conservation of mass, total momentum and total energy in the inter-species interactions
\begin{align}\label{conserv 2}
\begin{split}
		&\int_{\mathbb{R}^3}\mathcal{K}_{12}dp=n_1, \qquad \int_{\mathbb{R}^3}\mathcal{K}_{21}dp=n_2, \cr
		&\nu_{12}n_2\left(\int_{\mathbb{R}^3}\mathcal{K}_{12}p~dp-P_1\right)+\nu_{21}n_1\left(\int_{\mathbb{R}^3}\mathcal{K}_{21}p ~dp-P_2\right)=0, \cr
		&\nu_{12}n_2\left(\int_{\mathbb{R}^3}\mathcal{K}_{12}\frac{|p|^2}{2m_1}dp-E_1\right)+\nu_{21}n_1\left(\int_{\mathbb{R}^3}\mathcal{K}_{21}\frac{|p|^2}{2m_2}dp-E_2\right)=0,
	\end{split}
\end{align}
see Theorem 2.1 in \cite{Quantum} for quantum-quantum mixture case with unit collision frequencies. The proof can be extended for any choice of mixture between classical particles, fermions and bosons for more general velocity independent collision frequencies in a straight-forward way. For later references, we restate that \textcolor{black}{$c_i$ is determined by inverting the following relation 
\begin{align}\label{ci}
\frac{\int \frac{1}{e^{|p|^2+c_i}+\tau}dp}{\left(\int \frac{|p|^2}{e^{|p|^2+c_i}+\tau}dp\right)^{3/5}}=\frac{n_i^{\frac{2}{5}}}{\left(\frac{2m_iE_i}{n_i}-\frac{|P_i|^2}{n_i^2}\right)^{\frac{3}{5}}},
\end{align}
and $a_i$, $b_i$ determined by
\begin{align}\label{ai}
a_i=\left({\int_{\mathbb{R}^3}\frac{1}{e^{|p|^2+c_i}+\tau}dp}\right)^\frac{2}{3}n_i^{-\frac{2}{3}}, \qquad b_i=\frac{P_i}{n_i},
\end{align}
for $(i=1,2).$ The quantities $a$ and $b$ are given by 
\begin{align}\label{abform}
	a=\left(\frac{m_1^{\frac{3}{2}}\int_{\mathbb{R}^3}\frac{|p|^2}{e^{|p|^2+c_{12}}+\tau}dp+m_2^{\frac{3}{2}}\int_{\mathbb{R}^3}\frac{|p|^2}{e^{|p|^2+c_{21}}+\tau'}dp}{2E_1+2E_2-\frac{|P_1+P_2|^2}{N_1+N_2}}\right)^{\frac{2}{5}},\quad
	b=\frac{P_1+P_2}{N_1+N_2},
\end{align}
and $c_{12}$ and $c_{21}$ are determined by a more complex implicit condition than \eqref{ci}, see Theorem 2.1 in \cite{Quantum} for the detailed expression.} 
Moreover, $c_{12}, c_{21}$ can be proven to satisfy the following relationship
\begin{align}
	\frac{\textcolor{black}{m_1^{\frac{3}{2}}}\eta_{\tau}(c_{12})}{\textcolor{black}{m_2^{\frac{3}{2}}}\eta_{\tau'}(c_{21})}=\frac{n_1}{n_2}, 
 \label{relationships}
\end{align}

In \cite{Quantum}, it is shown that 
 the distribution function in the fermion case remains bounded from above by $1$ for all times  $t\geq 0$ if it holds true for $t=0$. 
The model also satisfies an H-Theorem for the entropy \begin{align*}
H_{\tau,\tau'}(f_1,f_2)&=H_{\tau}(f_1) + H_{\tau'}(f_2)
\end{align*}
with
\begin{align*}
H_\tau(f)&=\begin{cases}\int_{\Omega} \int_{\mathbb{R}^3} f\ln fdp dx \quad \mbox{for} \quad \tau=0 \\ 
\int_{\Omega} \int_{\mathbb{R}^3} f\ln f+\tau^{-1}(1-\tau f)\ln (1-\tau f)dp dx \quad \mbox{for} \quad \tau=\pm1
\end{cases} 
\end{align*}
($\tau=+1$ for fermion, $\tau=-1$, for boson), see Theorem 2.1 in \cite{Quantum}.
For further purposes, we denote the integrand by $h_{\tau}$, \sw{i.e.} 
\begin{align} 
h_{\tau}(z)=z\ln z+\tau^{-1}(1-\tau z)\ln (1-\tau z),
\label{entropy_h}
\end{align}
for $z>0$ if $\tau=0,-1$ and $0<z<1$ if $\tau=+1$. 
Eventually, we highlight that the equilibrium distributions $\mathcal{K}_{ij}$ solve an entropy minimization problem with contraints which ensure
the conservation properties, see \cite{EMV} for the one-species case and the appendix \ref{sec2} for the mixture case. 

We present further properties of the model in the following.

\section{Macroscopic equations and convergence rates for the velocities and kinetic temperatures in the space-homogeneous case} \label{sec:macro-equations}
In this section, we derive the macroscopic Euler equations of the model \eqref{BGK_quantum} and prove convergence rates of the mean velocities and the kinetic temperatures.
\subsection{Macroscopic equations}
We denote the macroscopic fields of the inter-species local equilibrium $\mathcal{K}_{12}$ and $\mathcal{K}_{21}$ by 
\begin{align}\label{def12}
\begin{split}
&P_{12}=\int_{\mathbb{R}^3}\mathcal{K}_{12}~p~dp, \quad P_{21}=\int_{\mathbb{R}^3}\mathcal{K}_{21}~p~dp, \cr &E_{12}=\int_{\mathbb{R}^3}\mathcal{K}_{12}\frac{|p|^2}{2m_1}dp, \quad E_{21}=\int_{\mathbb{R}^3}\mathcal{K}_{12}\frac{|p|^2}{2m_2}dp
\end{split}
\end{align}
and derive the following macroscopic equations for the quantum BGK model \eqref{BGK_quantum}. 
\begin{theorem}
Assume $\nu_{12} n_2 = \nu_{21} n_1$. Let $(f_1,f_2)$ be a solution to \eqref{BGK_quantum}, then we obtain the following formal conservation laws
\begin{align}
\begin{split}
\label{macrosequ1}
&\partial_t n_1 + \frac{1}{m_1} \nabla_x \cdot P_1
=0, \quad
\partial_t n_2 + \frac{1}{m_2} \nabla_x \cdot P_2
=0,
\\
& \partial_t P_1 +\frac{1}{m_1} \nabla_x \cdot \int p \otimes p f_1 dp
 = \nu_{12} n_2 (P_{12} - P_1)
\\
& \partial_t P_2+\frac{1}{m_2} \nabla_x \cdot \int p \otimes p f_1 dp
 = \nu_{21}n_1(P_{21}-P_2)
\\
&\partial_t E_1 + \frac{1}{2m_1^2}\nabla_x \cdot \int |p|^2p f_1 dp
= \nu_{12}n_2(E_{12}-E_1), \cr
&\partial_t E_2 + \frac{1}{2m_2^2}\nabla_x \cdot \int |p|^2p f_2 dp
= \nu_{21}n_1(E_{21}-E_2),
\end{split}
\end{align}
where the exchange terms of momentum can be computed as
\begin{align}
P_{12}-P_1 = - (P_{21}-P_2) = \frac{N_1 N_2}{N_1+N_2} \left(\frac{P_2}{N_2} - \frac{P_1}{N_1} \right).
\label{ex_mom}
\end{align}
Furthermore, we define the functions $\eta_{\tau}(c)=\int \frac{1}{e^{|p|^2+c}+\tau}$ and $\eta^E_{\tau}(c)=\int \frac{|p|^2}{e^{|p|^2+c}+\tau} dp,$
and obtain for the exchange of energy
\begin{align}
\begin{split}
    \label{ex_en}
    &E_{12}-E_1 = -(E_{21}-E_2)\\&=\frac{1}{2} \frac{N_1 |P_1+P_2|^2}{(N_1 + N_2)^2} + \frac{(E_1+E_2)- \frac{1}{2} \frac{|P_1+P_2|^2}{ N_1+N_2}}{m_1^{3/2} \eta^E_{\tau}(c_{12}) + m_2^{3/2} \eta^E_{\tau}(c_{21})} m_1^{3/2} \eta^E_{\tau}(c_{12})- E_1. 
    \end{split}
\end{align}
\end{theorem}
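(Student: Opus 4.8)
The plan is to obtain all three assertions by taking velocity moments of the kinetic system \eqref{BGK_quantum} and then explicitly evaluating the first and second moments of the inter-species equilibria $\mathcal{K}_{12}$ and $\mathcal{K}_{21}$. For the conservation laws \eqref{macrosequ1}, I would multiply each equation by $1$, $p$, and $\frac{|p|^2}{2m_i}$ in turn and integrate over $p\in\mathbb{R}^3$. The advection term produces the divergence of the corresponding flux (for instance $\frac{1}{m_1}\nabla_x\cdot\int p\otimes p\, f_1\,dp$ at the momentum level), while the intra-species relaxation terms vanish identically by the matching conditions \eqref{conserv 1}, since $\int\mathcal{K}_{ii}\,dp=n_i$, $\int\mathcal{K}_{ii}\,p\,dp=P_i$, and $\int\mathcal{K}_{ii}\frac{|p|^2}{2m_i}\,dp=E_i$. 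The surviving inter-species terms reduce to $\nu_{12}n_2(P_{12}-P_1)$ and $\nu_{12}n_2(E_{12}-E_1)$ by the definitions \eqref{def12}, and the mass equations close because $\int\mathcal{K}_{12}\,dp=n_1$ from \eqref{conserv 2}.

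For the momentum exchange \eqref{ex_mom}, the key step is to evaluate $P_{12}=\int\mathcal{K}_{12}\,p\,dp$ directly. The substitution $q=\frac{p}{m_1}-b$ centers the integrand, and the linear-in-$q$ contribution vanishes by odd symmetry, leaving $P_{12}=N_1 b$ once the density normalization $\int\mathcal{K}_{12}\,dp=n_1$ is invoked. Inserting the explicit bulk velocity $b=\frac{P_1+P_2}{N_1+N_2}$ from \eqref{abform} and subtracting $P_1$ yields $P_{12}-P_1=\frac{N_1P_2-N_2P_1}{N_1+N_2}$, which factors as $\frac{N_1N_2}{N_1+N_2}\left(\frac{P_2}{N_2}-\frac{P_1}{N_1}\right)$. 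The identical computation gives $P_{21}=N_2 b$, so the antisymmetry $P_{12}-P_1=-(P_{21}-P_2)$ follows at once; alternatively it is forced by the total-momentum constraint in \eqref{conserv 2} together with $\nu_{12}n_2=\nu_{21}n_1$.

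The energy exchange \eqref{ex_en} is the most involved part, and it is where I expect the main difficulty. Evaluating $E_{12}=\int\mathcal{K}_{12}\frac{|p|^2}{2m_1}\,dp$ with the same shift $q=\frac{p}{m_1}-b$ and expanding $|q+b|^2$ splits the integral into a bulk part $\frac{N_1}{2}|b|^2=\frac{N_1}{2}\frac{|P_1+P_2|^2}{(N_1+N_2)^2}$, which already matches the first term of \eqref{ex_en}, and a thermal part proportional to $\int\frac{|q|^2}{e^{m_1a|q|^2+c_{12}}+\tau}\,dq$. The crux is to convert this thermal integral into the reference function $\eta^E_\tau(c_{12})$: the rescaling $r=\sqrt{m_1a}\,q$ extracts a factor $(m_1a)^{-5/2}$, giving a thermal contribution $\frac{m_1^{3/2}}{2a^{5/2}}\eta^E_\tau(c_{12})$. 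The decisive algebraic move is then to substitute the explicit formula for $a$ from \eqref{abform}, whose $5/2$-power cancels the prefactor and reproduces exactly the quotient $\frac{(E_1+E_2)-\frac{1}{2}\frac{|P_1+P_2|^2}{N_1+N_2}}{m_1^{3/2}\eta^E_\tau(c_{12})+m_2^{3/2}\eta^E_{\tau'}(c_{21})}m_1^{3/2}\eta^E_\tau(c_{12})$ appearing in the statement. Subtracting $E_1$ closes the identity.

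Finally, the antisymmetry $E_{12}-E_1=-(E_{21}-E_2)$ need not be checked term by term from the two explicit formulas; it is immediate from the total-energy matching condition in \eqref{conserv 2} under the hypothesis $\nu_{12}n_2=\nu_{21}n_1$. The only bookkeeping obstacle I anticipate is keeping the mass factors $m_i^{3/2}$ and the powers of $a$ consistent through the rescaling, and correctly tracking which species' statistics ($\tau$ versus $\tau'$) enters each $\eta^E$; everything else reduces to parity arguments and the normalizations already guaranteed by \eqref{conserv 1}--\eqref{conserv 2}.
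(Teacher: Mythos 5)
Your proposal is correct and follows essentially the same route as the paper: moments against $(1,p,\tfrac{|p|^2}{2m_k})$, the identities $P_{12}=N_1b$, $P_{21}=N_2b$ with $b=\frac{P_1+P_2}{N_1+N_2}$, and the thermal identity $E_{12}-\tfrac{1}{2}\frac{|P_{12}|^2}{N_1}=\tfrac{1}{2}a^{-5/2}m_1^{3/2}\eta^E_{\tau}(c_{12})$ followed by substituting $a$ from \eqref{abform}. The only difference is cosmetic: you derive that thermal identity explicitly via the shift and rescaling, where the paper cites the analogous computation in \cite{BaeYun}; your remark that the antisymmetries follow directly from \eqref{conserv 2} with $\nu_{12}n_2=\nu_{21}n_1$ is also exactly what the paper uses.
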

\begin{proof}
We multiply the  first  equation  of  \eqref{BGK_quantum}  by  $(1,p, \frac{|p|^2}{2 m_1})$,  and the second one by $(1, p, \frac{|p|^2}{2 m_2}).$  Then we integrate them with respect to the momentum $p$ to obtain \eqref{macrosequ1} after a straight-forward computation on the left-hand side. 

The exchange of momentum can be computed as follows. 
Computing the integral in the definition of $P_{12}$ and $P_{21}$ in \eqref{def12}, we observe that $P_{12}=bN_1$ and $P_{21}=bN_2$. Substituting the quantity of \textcolor{black}{$b$ in \eqref{abform}}, we obtain
\begin{align*}
\frac{P_{12}}{N_1} - \frac{P_1}{N_1}&=   \frac{P_1+P_2}{N_1+N_2}-\frac{P_1}{N_1} = \frac{N_2}{N_1+N_2}\left(\frac{P_2}{N_2} - \frac{P_1}{N_1}\right)
\end{align*}
and
\begin{align*}
\frac{P_{21}}{N_2} - \frac{P_2}{N_2}&=  \frac{P_1+P_2}{N_1+N_2}= \frac{N_1}{N_1+N_2}\left(\frac{P_1}{N_1} - \frac{P_2}{N_2}\right).
\end{align*}
Similar to section 2 in \cite{BaeYun}, we get 
\begin{align*}
E_{12}- \frac{1}{2} \frac{|P_{12}|^2}{ N_1} = \frac{1}{2} a^{-5/2} m_1^{3/2} \eta^E_{\tau}(c_{12}).
\end{align*}
We can replace \textcolor{black}{$a^{-5/2}$ with the formula \eqref{abform}}
and obtain
\begin{align*}
&E_{12}-E_1 =  -(E_{21}-E_2) \\ &= \frac{1}{2} \textcolor{black}{\frac{N_1 |P_1+P_2|^2}{(N_1 +N_2)^2}} + \frac{(E_1+E_2)- \frac{1}{2} \frac{|P_1+P_2|^2}{N_1+N_2}}{m_1^{3/2} \eta^E_{\tau}(c_{12}) + m_2^{3/2} \eta^E_{\tau'}(c_{21})} m_1^{3/2} \eta^E_{\tau}(c_{12})- E_1. 
\end{align*}
\end{proof}

\begin{remark}
For later purposes, we remark that in the space-homogeneous case the system of equations reduces to
\begin{align}
\label{macrosequ2}
\begin{split}
\partial_t n_1 &=0, \quad \hspace{2.5cm}
\partial_t n_2 =0, \\
\partial_t P_1&= \nu_{12}n_2(P_{12} - P_1), \quad
\partial_t P_2= \nu_{21}n_1(P_{21}-P_2), \\
\partial_t E_1&= \nu_{12}n_2(E_{12}-E_1), \quad
\partial_t E_2 = \nu_{21}n_1(E_{21}-E_2).
\end{split}
\end{align}
\end{remark}

\begin{remark}
\label{remark:classic-classic}
In the classical case $(\tau=\tau'=0)$ we get 
\begin{align}\label{nnrelation}
 \frac{n_1}{n_2}= \frac{m_1^{3/2}\eta_{0}(c_{12})}{m_2^{3/2}\eta_{0}(c_{21})}=\frac{m_1^{3/2}\int_{\mathbb{R}^3}\frac{1}{e^{|p|^2+c_{12}}}dp}{m_2^{3/2}\int_{\mathbb{R}^3}\frac{1}{e^{|p|^2+c_{21}}}dp}=\frac{m_1^{3/2}}{m_2^{3/2}} \frac{e^{-c_{12}}}{e^{-c_{21}}} 
\end{align}
by computing the integrals explicitly in the relationship \eqref{relationships}.
Using this, we can calculate
\begin{align*}
    \frac{m_1^{3/2} \eta_0^E(c_{12})}{m_1^{3/2} \eta_0^E(c_{12}) + m_2^{3/2} \eta^E_0(c_{21})} = \frac{m_1^{3/2} e^{-c_{12}}}{m_1^{3/2} e^{-c_{12}} + m_2^{3/2} e^{-c_{21}}} 
    =\frac{n_1}{n_1+n_2}
\end{align*}
and obtain
\begin{align*}
    E_{12}-&E_1 
    = \frac{n_1 n_2}{n_1+n_2} \left( \frac{E_2}{n_2} - \frac{E_1}{n_1} + \frac{m_1-m_2}{(N_1+N_2)^2} \frac{1}{2} |P_1+P_2|^2 \right) \\ &= \frac{n_1 n_2}{n_1+n_2} \Bigg( \frac{E_2}{n_2} - \frac{1}{2} \frac{|P_2|^2}{n_2 N_2}- \frac{E_1}{n_1} + \frac{|P_1|^2}{n_1 N_1} + m_1 m_2 \frac{n_1 N_1 + 2 n_1 N_2 + n_2 N_2}{( N_1 +  N_2)^2} \frac{1}{2} \frac{|P_2|^2}{ N_2^2} \\&- m_1 m_2 \frac{n_1 N_1 + 2  N_1 n_2 + n_2 N_2}{( N_1 + N_2)^2} \frac{1}{2} \frac{|P_1|^2}{ N_1^2} + m_1 m_2 \frac{(m_1-m_2) n_1 n_2}{( N_1 + N_2)^2} \frac{P_1}{ N_1} \cdot \frac{P_2}{ N_2} \Bigg)\\ &= \frac{n_1 n_2}{n_1+n_2} \Bigg( \frac{E_2}{n_2} - \frac{1}{2} \frac{|P_2|^2}{n_2 N_2}- \frac{E_1}{n_1} + \frac{|P_1|^2}{n_1 N_1} + m_1 m_2 \frac{n_1 N_1+n_2 N_2}{( N_1 + N_2)^2} \frac{1}{2} \left( \frac{|P_2|^2}{ N_2^2} - \frac{|P_1|^2}{ N_1^2} \right)\\& + \left( \frac{P_2}{ N_2} - \frac{P_1}{ N_1} \right) \cdot \left( \frac{P_1}{ n_1}  + \frac{P_2}{n_2} \right)\Bigg).
\end{align*}
\end{remark}
\subsection{Convergence rate for the velocities and kinetic temperatures in the space-homo\-ge\-neous case}\label{app:velocities}
In equilibrium, both distribution functions $f_1$ and $f_2$ will finally share the same velocity and the same temperature. For the momentum, we can prove the following exponential convergence rate.
\begin{theorem}\label{th:estimate_vel}
Assume $\nu_{12} n_2= \nu_{21} n_1$ and $ \nu_{12}, \nu_{21}$ independent of $t$. In the space-homogeneous case of \eqref{BGK_quantum}, we have the following convergence rate for the momentum 
\begin{equation} \label{eq:quantum-decay-momentum}
\frac{P_1(t)}{N_1}-\frac{P_2(t)}{ N_2} = e^{-\frac{\textcolor{black}{\nu_{12} n_2 N_2 + \nu_{21} n_1 N_1} }{N_1+N_2}t} \left(\frac{P_1(0)}{ N_1} - \frac{P_2(0)}{ N_2}\right).
\end{equation}
\end{theorem}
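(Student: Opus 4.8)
The plan is to collapse the space-homogeneous momentum dynamics to a single linear scalar ODE for the velocity difference and integrate it directly. The crucial preliminary observation is that in the space-homogeneous setting the densities are conserved: the first line of \eqref{macrosequ2} gives $\partial_t n_1 = \partial_t n_2 = 0$, so that $n_i$ and therefore the masses $N_i = m_i n_i$ are constant in time. Together with the standing hypothesis that $\nu_{12}, \nu_{21}$ are independent of $t$ (and $\nu_{12} n_2 = \nu_{21} n_1$), this means the relaxation coefficients appearing below are genuine constants, which is precisely what makes the exponential solution available.

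Next I would differentiate the mean velocities $\frac{P_1}{N_1}$ and $\frac{P_2}{N_2}$, using the momentum equations in \eqref{macrosequ2} and the exchange identity \eqref{ex_mom} already established in the previous theorem. Dividing the $P_1$-equation by the constant $N_1$ and inserting $P_{12}-P_1 = \frac{N_1 N_2}{N_1+N_2}\bigl(\frac{P_2}{N_2}-\frac{P_1}{N_1}\bigr)$ gives
\begin{align*}
\partial_t \frac{P_1}{N_1} = \frac{\nu_{12} n_2 N_2}{N_1+N_2}\left(\frac{P_2}{N_2}-\frac{P_1}{N_1}\right).
\end{align*}
The analogous computation for $P_2$, combined with $P_{21}-P_2 = -(P_{12}-P_1)$, yields
\begin{align*}
\partial_t \frac{P_2}{N_2} = \frac{\nu_{21} n_1 N_1}{N_1+N_2}\left(\frac{P_1}{N_1}-\frac{P_2}{N_2}\right).
\end{align*}

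Subtracting these and abbreviating $w := \frac{P_1}{N_1}-\frac{P_2}{N_2}$ then collapses the system to
\begin{align*}
\partial_t w = -\,\frac{\nu_{12} n_2 N_2 + \nu_{21} n_1 N_1}{N_1+N_2}\, w,
\end{align*}
which is a decoupled set of three identical scalar linear ODEs (one per spatial component) sharing the same constant rate. Integrating this from time $0$ produces exactly the exponential \eqref{eq:quantum-decay-momentum}.

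I do not expect a serious obstacle: the analytical content has effectively been front-loaded into the exchange-term identity \eqref{ex_mom}, and the remaining work is an elementary linear ODE. The one point that genuinely requires care is the justification that the relaxation rate is constant, which is where the space-homogeneous conservation of each $n_i$ enters together with the time-independence of the collision frequencies; if the densities were allowed to vary one would instead need an integrating factor and the exponent would become a time integral rather than a linear function of $t$.
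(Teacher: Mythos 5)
Your proof is correct and follows essentially the same route as the paper: differentiate the two mean velocities using \eqref{macrosequ2} and the exchange identity \eqref{ex_mom}, subtract to obtain a constant-coefficient linear ODE for the velocity difference, and integrate. Your explicit remark that the constancy of $n_i$ (hence $N_i$) and of the collision frequencies is what makes the rate a genuine constant is the same justification the paper gives, just stated slightly more prominently.
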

\begin{proof}
We start with calculating 
\begin{align*}
\partial_t \left(\frac{P_1}{ N_1} \right)  = \frac{1}{ N_1} \partial_t P_1 
= \textcolor{black}{\nu_{12} n_2}\frac{1}{ N_1}(P_{12} - P_1). 
\end{align*}
The first equality holds true because $N_1= m_1 n_1$ is constant in the space-homogeneous case \eqref{macrosequ2}. Then, we inserted the macroscopic equation for the time evolution of $P_1$ from \eqref{macrosequ2}. 
Using the expression \eqref{ex_mom} for the exchange of momentum leads to
\begin{align}
\partial_t \left( \frac{P_1}{ N_1} \right) = \textcolor{black}{\nu_{12} n_2} \frac{ N_2}{ N_1+ N_2} \left( \frac{P_2}{ N_2} - \frac{P_1}{ N_1} \right).
\label{momentum}
\end{align}
In a similar way, we can compute
\begin{align*}
\partial_t \left( \frac{P_2}{ N_2} \right) = \textcolor{black}{\nu_{21} n_1} \frac{N_1}{N_1+ N_2} \left( \frac{P_1}{ N_1} - \frac{P_2}{ N_2} \right).
\end{align*}
Substracting both equations yields 
\begin{align*}
\partial_t \left( \frac{P_1}{N_1} - \frac{P_2}{ N_2} \right)= - \frac{\textcolor{black}{\nu_{12} n_2 N_2 + \nu_{21} n_1 N_1} }{N_1+N_2}\left(\frac{P_1}{ N_1} - \frac{P_2}{ N_2} \right),
\end{align*}
and it gives the result \eqref{eq:quantum-decay-momentum}.
\end{proof}

\begin{remark}
Using the relationship $P_j= N_j b_j$ from \eqref{ai}, one can equivalently write
\begin{align*}
b_1 -b_2 = e^{-\frac{\nu_{12} n_1 N_2 + \nu_{21} n_2 N_1}{N_1+N_2}t} (b_1(0) - b_2(0)).
\end{align*}
\end{remark}
We continue with deriving the convergence rates of the quantities $\frac{E_1}{ n_1}- \frac{1}{2} \frac{|P_1|^2}{n_1 N_1}$ and $\frac{E_2}{ n_2}- \frac{1}{2} \frac{|P_2|^2}{n_2 N_2}$. Only in the classical case, these quantities correspond to the (physical) temperature. Whereas in the quantum case, we need to specify them as kinetic temperatures.

  \begin{theorem}\label{th:estimate_temp}
Let $\nu_{12} n_2 =\nu_{21} n_1 =: \tilde{\nu}$ and $\tilde{\nu}$ be independent of $t$.
In the space-homogeneous case of \eqref{BGK_quantum}, we have 
\begin{align} \label{eq:quantum-decay-kinetic-T}
\begin{split}
&\left( \frac{E_1(t)}{n_1}- \frac{1}{2} \frac{|P_1(t)|^2}{n_1 N_1} \right) - \left(\frac{E_2(t)}{ n_2} - \frac{1}{2} \frac{|P_2(t)|^2}{\textcolor{black}{n_2 N_2} }\right) \\&= e^{-\tilde{\nu} t} \left( \left( \frac{E_1(0)}{ n_1} - \frac{1}{2} \frac{|P_1(0)|^2}{n_1 N_1}\right) - \left( \frac{E_2(0)}{ n_2} - \frac{1}{2} \frac{|P_2(0)|^2}{n_2 N_2} \right) \right) \\
&+ \frac{1}{2} m_1 m_2 \frac{n_2 N_2- n_1 N_1}{(N_1+N_2)^2} e^{-\tilde{\nu}t} (1- e^{-\tilde{\nu}t}) \bigg| \frac{P_2(0)}{ N_2} - \frac{P_1(0)}{ N_1} \bigg|^2  \\
&+\tilde{\nu}\left(E_1(0)+E_2(0)- \frac{1}{2} \frac{|P_1(0)+P_2(0)|^2}{ N_1+ N_2} \right) e^{-\tilde{\nu} t}  \int_0^t e^{\tilde{\nu}s }\left[\frac{\frac{m_1^{3/2} \eta^E_{\tau}(c_{12}(s))}{ n_1}- \frac{m_2^{3/2} \eta^E_{\tau'}(c_{21}(s))}{ n_2}}{m_1^{3/2} \eta^E_{\tau}(c_{12}) + m_2^{3/2} \eta^E_{\tau'}(c_{21})} \right] ds. 
\end{split}
\end{align}

\end{theorem}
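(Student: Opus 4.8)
The plan is to derive an ODE for the difference of kinetic temperatures and solve it explicitly, mirroring the structure of the momentum proof in Theorem~\ref{th:estimate_vel}. First I would compute $\partial_t\big(\tfrac{E_1}{n_1}-\tfrac12\tfrac{|P_1|^2}{n_1N_1}\big)$. Since $n_1,N_1$ are constant in the space-homogeneous case, this equals $\tfrac{1}{n_1}\partial_t E_1 - \tfrac{1}{n_1N_1}P_1\cdot\partial_t P_1$, into which I substitute the reduced macroscopic equations \eqref{macrosequ2}, writing $\partial_t E_1=\tilde\nu(E_{12}-E_1)$ and $\partial_t P_1=\tilde\nu(P_{12}-P_1)$. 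I would then insert the explicit exchange expressions \eqref{ex_mom} and \eqref{ex_en}, together with $P_{12}=bN_1$ and $P_{21}=bN_2$, and likewise for the second species. The goal is to arrange the resulting right-hand side into the form $-\tilde\nu\,\Delta + (\text{forcing})$, where $\Delta$ denotes the kinetic-temperature difference on the left of \eqref{eq:quantum-decay-kinetic-T}.

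The key bookkeeping step is to track which terms cancel and which survive when I subtract the two species' equations. From \eqref{ex_en} the energy exchange splits into three pieces: a quadratic momentum term $\tfrac12\tfrac{N_1|P_1+P_2|^2}{(N_1+N_2)^2}$, a fraction of the total peculiar energy weighted by $m_1^{3/2}\eta^E_\tau(c_{12})$, and the $-E_1$ term. I expect the $-\tilde\nu\Delta$ diagonal part to emerge from combining the $-E_1$ contributions with the momentum corrections, producing the leading exponential $e^{-\tilde\nu t}$ factor. The quadratic momentum piece, once the already-solved decay \eqref{eq:quantum-decay-momentum} for $\tfrac{P_1}{N_1}-\tfrac{P_2}{N_2}$ is substituted, should produce the second line of \eqref{eq:quantum-decay-kinetic-T}: note the prefactor $e^{-\tilde\nu t}(1-e^{-\tilde\nu t})$ is exactly what arises from integrating $e^{-2\tilde\nu s}$ against the integrating factor $e^{\tilde\nu s}$ in a Duhamel representation, which signals that this contribution is a forcing term rather than part of the homogeneous solution.

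Concretely, after reducing to an equation of the form $\partial_t\Delta = -\tilde\nu\,\Delta + F(t)$, I would solve by the integrating factor $e^{\tilde\nu t}$, giving
\begin{align*}
\Delta(t) = e^{-\tilde\nu t}\Delta(0) + e^{-\tilde\nu t}\int_0^t e^{\tilde\nu s} F(s)\,ds.
\end{align*}
The forcing $F$ collects the momentum-quadratic term and the $c_{12},c_{21}$-dependent energy-partition term. For the former, substituting \eqref{eq:quantum-decay-momentum} makes $\big|\tfrac{P_2}{N_2}-\tfrac{P_1}{N_1}\big|^2$ decay like $e^{-2\tilde\nu s}$, and the remaining $s$-integral is elementary, yielding the $\tfrac12 m_1m_2\tfrac{n_2N_2-n_1N_1}{(N_1+N_2)^2}e^{-\tilde\nu t}(1-e^{-\tilde\nu t})$ line. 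For the latter, because the $\eta^E$ functions depend on the time-varying parameters $c_{12}(s),c_{21}(s)$ with no closed form, the integral cannot be evaluated and is simply left as the final integral term in \eqref{eq:quantum-decay-kinetic-T}, carrying the total conserved peculiar energy $E_1(0)+E_2(0)-\tfrac12\tfrac{|P_1(0)+P_2(0)|^2}{N_1+N_2}$ as its prefactor.

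The main obstacle is the algebraic reduction of $F(t)$ to precisely the stated form: correctly combining the three pieces of \eqref{ex_en} with the momentum derivative $P_1\cdot\partial_t P_1$ term and verifying that all pieces except the two displayed forcing terms either cancel against the diagonal $-\tilde\nu\Delta$ or recombine into the conserved-energy prefactor. I would expect the antisymmetry $E_{12}-E_1 = -(E_{21}-E_2)$ and the relation \eqref{relationships} to be essential in showing that the energy-partition term depends only on the single ratio in brackets rather than on $c_{12},c_{21}$ separately in an unmanageable way; the delicate sign accounting in the $m_1m_2(n_2N_2-n_1N_1)$ coefficient is where I anticipate the greatest risk of error.
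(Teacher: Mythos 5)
Your proposal is correct and follows essentially the same route as the paper's proof: differentiate each species' kinetic temperature using the space-homogeneous equations \eqref{macrosequ2} and the exchange terms \eqref{ex_mom}, \eqref{ex_en}, subtract to obtain $\partial_t\Delta=-\tilde\nu\,\Delta+F(t)$ with $F$ consisting of the quadratic momentum term and the $\eta^E$-partition term, then apply Duhamel with the momentum decay \eqref{eq:quantum-decay-momentum} feeding the $e^{-\tilde\nu t}(1-e^{-\tilde\nu t})$ forcing contribution and the conserved total peculiar energy pulled out as the prefactor of the remaining integral. The only cosmetic difference is that you invoke \eqref{relationships} as possibly essential, whereas the paper's algebra for the diagonal term never needs it; otherwise your plan reproduces the paper's argument step for step.
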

\begin{proof}
We note that $n_1$ and $N_1$ are constants in the space-homogeneous case. Using \eqref{macrosequ2} and \eqref{momentum} gives 
\begin{align*}
\partial_t \left( \frac{E_1}{ n_1}- \frac{1}{2} \frac{|P_1|^2}{n_1 N_1} \right) &=\frac{1}{n_1} \partial_t \left( E_1 \right) - \frac{P_1}{ n_1} \partial_t \left( \frac{P_1}{ N_1} \right) \\&=\tilde{\nu} \left(\frac{E_{12}}{ n_1}- \frac{E_1}{ n_1} - \frac{P_1}{ n_1} \frac{ N_2}{N_1 +N_2} \left( \frac{P_2}{N_2} - \frac{P_1}{ N_1} \right) \right).
\end{align*}
Applying \eqref{ex_en}, it follows
\begin{align*}
\begin{split}
\partial_t \left(\frac{E_1}{ n_1}- \frac{1}{2} \frac{|P_1|^2}{n_1 N_1} \right) =\tilde{\nu}\textcolor{black}{\bigg[}\frac{1}{2} \frac{m_1 |P_1+P_2|^2}{( N_1 +N_2)^2} + \frac{(E_1+E_2)- \frac{1}{2} \frac{|P_1+P_2|^2}{N_1+N_2}}{m_1^{3/2} \eta^E_{\tau}(c_{12}) + m_2^{3/2} \eta^E_{\tau'}(c_{21})} \frac{m_1^{3/2} \eta^E_{\tau}(c_{12})}{ n_1}\\- \frac{E_1}{ n_1} -  \frac{P_1}{ n_1} \frac{ N_2}{N_1 + N_2} \left( \frac{P_2}{N_2} - \frac{P_1}{N_1} \right)\textcolor{black}{\bigg]}.
\end{split}
\end{align*}
Analogously for species 2 we obtain
\begin{align*}
\partial_t \left(\frac{E_2}{ n_2}- \frac{1}{2} \frac{|P_2|^2}{n_2 N_2} \right) =\tilde{\nu} \textcolor{black}{\bigg[}\frac{1}{2} \frac{m_2|P_1+P_2|^2 }{(N_1 +N_2)^2} + \frac{(E_1+E_2)- \frac{1}{2} \frac{ |P_1+P_2|^2}{N_1+N_2}}{m_1^{3/2} \eta^E_{\tau}(c_{12}) + m_2^{3/2} \eta^E_{\tau'}(c_{21})} \frac{m_2^{3/2} \eta^E_{\tau}(c_{21})}{ n_2}\\- \frac{E_2}{ n_2} -  \frac{P_2}{n_2} \frac{N_1}{ N_1+ N_2} \left(\frac{P_1}{N_1} - \frac{P_2}{N_2}\right) \textcolor{black}{\bigg]}.
\end{align*}
Subtracting both equations, we get
\begin{align}\label{I+II}
\begin{split}
\partial_t \left( \left(\frac{E_1}{ n_1}- \frac{1}{2} \frac{|P_1|^2}{n_1 N_1}\right) -\left(\frac{E_2}{ n_2}-  \frac{|P_2|^2}{n_2 N_2} \right) \right)  &=
\tilde{\nu}(I+II),
\end{split}
\end{align}
where
\begin{align*}
\begin{split}
I &:= \frac{E_2}{ n_2}-\frac{E_1}{ n_1}+
\frac{\frac{1}{2}(m_1-m_2) n_1 N_1 +  N_2 (N_1+N_2)}{(N_1+N_2)^2} \frac{|P_1|^2}{n_1 N_1} \cr 
&+  \frac{\frac{1}{2}(m_1-m_2) n_2 N_2-  N_1 (N_1+N_2)}{(N_1+N_2)^2} \frac{|P_2|^2}{n_2 N_2} +  \frac{ (n_1 N_1-n_2 N_2)}{(N_1 +N_2)^2 n_1 n_2} P_1 \cdot P_2, \cr 
&= -\left(\left(\frac{E_1}{ n_1} -\frac{|P_1|^2}{n_1 N_1}\right) - \left(\frac{E_2}{ n_2} -  \frac{|P_2|^2}{n_2 N_2}\right)\right) +  \frac{1}{2} m_1 m_2 \frac{n_2 N_2-n_1 N_1}{(N_1 + N_2)^2} \bigg|\frac{P_1}{N_1}- \frac{P_2}{N_2}\bigg|^2,
\end{split}
\end{align*}
and
\begin{align*}
\begin{split}
II &:= \frac{(E_1+E_2)- \frac{1}{2} \frac{|P_1+P_2|^2}{ N_1+N_2}}{m_1^{3/2} \eta^E_{\tau}(c_{12})+m_2^{3/2}\eta^E_{\tau'}(c_{21})} \left[\frac{m_1^{3/2} \eta^E_{\tau}(c_{12})}{ n_1}-\frac{m_2^{3/2} \eta^E_{\tau'}(c_{21})}{ n_2} \right].
\end{split}
\end{align*}
\textcolor{black}{
Then, Duhamel's formula to \eqref{I+II} gives the result. }

\end{proof}
\begin{remark} \label{remark:classic-classic-2}
In the classical case, using the relation \eqref{nnrelation} in remark \ref{remark:classic-classic}, once we compute the following quantity
\begin{align*}
    \left[\frac{\frac{m_1^{3/2} \eta^E_{\tau}(c_{12})}{ n_1}- \frac{m_2^{3/2} \eta^E_{\tau'}(c_{21})}{ n_2}}{m_1^{3/2} \eta^E_{\tau}(c_{12}) + m_2^{3/2} \eta^E_{\tau'}(c_{21})} \right],
\end{align*}
for $\tau=\tau'=0$, then by an explicit computation of $\eta^E_{\tau}$, we obtain 
\begin{align*}
    \frac{\frac{m_1^{3/2}}{n_1} \frac{3}{2} e^{-c_{12}} \pi^{3/2} - \frac{m_2^{3/2}}{n_2} \frac{3}{2} e^{-c_{21}} \pi^{3/2}}{m_1^{3/2} \frac{3}{2} e^{-c_{12}} \pi^{3/2} + m_2^{3/2} \frac{3}{2} e^{-c_{21}} \pi^{3/2}} = \frac{\frac{m_1^{3/2}}{n_1} e^{-c_{12}} - \frac{m_2^{3/2}}{n_2} e^{-c_{21}} }{ m_1^{3/2} e^{-c_{12}} + m_2^{3/2} e^{-c_{21}}}=0.
\end{align*}
So in the classical-classical case, the last term of \eqref{eq:quantum-decay-kinetic-T} vanishes, and we obtain the temperature convergence rate
\begin{align*}
   &\left( \frac{E_1\textcolor{black}{(t)}}{ n_1}- \frac{1}{2} \frac{|P_1\textcolor{black}{(t)}|^2}{n_1 N_1} \right) - \left(\frac{E_2\textcolor{black}{(t)}}{ n_2} - \frac{1}{2} \frac{|P_2\textcolor{black}{(t)}|^2}{n_2 N_2 }\right) \\&= e^{-\tilde{\nu}t} \left( \left( \frac{E_1(0)}{ n_1} - \frac{1}{2} \frac{|P_1(0)|^2}{n_1 N_1}\right) - \left( \frac{E_2(0)}{ n_2} - \frac{1}{2} \frac{|P_2(0)|^2}{n_2 N_2} \right) \right) \\&+ \frac{1}{2} m_1 m_2 \frac{n_2 N_2- n_1 N_1}{(N_1+N_2)^2} e^{-\tilde{\nu}t} (1- e^{-\tilde{\nu}t}) \bigg| \frac{P_2(0)}{N_2} - \frac{P_1(0)}{ N_1} \bigg|^2.  
\end{align*}
\end{remark}
In the remainder of the paper, we establish a numerical scheme which fulfills above physical properties on a discrete level. 

\section{Numerical scheme}


\subsection{Time discretization}
\label{sec:time}

Let $k,j=1,2$ and $k\neq j$. We write \eqref{BGK_quantum} as 
\begin{align}\label{QBGKdt}
    \partial_t f_k + \mathcal{T}_k(f_k) = \mathcal{R}_{k}(f_k,f_j)
\end{align}
with the combined relaxation operator  
\begin{align*}
\mathcal{R}_{k}(f_k,f_j) = \mathcal{R}_{kk} + \mathcal{R}_{kj} =  \nu_{kk} n_k\left(\E_{kk}-f_k\right) + \nu_{kj}n_j\left(\E_{kj}-f_k\right)
\end{align*}
and the transport operator 
\begin{align*}
\mathcal{T}_k(f_k) = \frac{p}{m_k}\cdot \nabla_x f_k. 
\end{align*}
In the following, for simplicity, we assume that the collision frequencies $\tilde{\nu}_{kk}:=\nu_{kk} n_k$ and $\tilde{\nu}_{kj}:=\nu_{kj} n_k$ are constant in $x$ and $t$. But an extension to an $x$ and $t$ dependence of the collision frequency would also be possible. 
Large collision frequencies result in a stiff relaxation operator such that an implicit time discretization for the relaxation part is a convenient choice. We pursue implicit-explicit (IMEX) schemes where $\mathcal{R}_{k}$ is treated implicitly and $\mathcal{T}_i$ is treated explicitly.  

Given $t_\ell = \ell \Delta t$ for $\ell\in \mathbb{N}_0$, a simple update of $f_k^\ell \approx f_k(x,p,t_\ell)$ from $t_{\ell}$ to $t_{\ell+1}$ uses the approximation 
\begin{align*}
\mathcal{R}_{k}(f_k^{\ell+1},f_j^{\ell+1}) \approx  \tilde{\nu}_{kk} \left(\E_{kk}^{\ell+1}-f_k^{\ell+1}\right) + \tilde{\nu}_{kj} \left(\E_{kj}^{\ell+1}-f_k^{\ell+1}\right),
\end{align*}
where $\E_{kk}^{\ell+1}$ and $\E_{kj}^{\ell+1}$ are discrete target functions depending on $f_k^{\ell+1}$ and $f_j^{\ell+1}$ via the solution of a convex minimization problem that is inspired by the work in \cite{velocity-dependent}. We discuss it in Section \ref{sec:generaltime}. By this procedure, $\E_{kk}$ and $\E_{kj}$ are evaluated exactly at the next time step  (up to numerical tolerances) which results in the preservation of conservation properties, and the first-order version inherits additional properties from the continuum model.  

\subsubsection{First-order splitting} \label{subsec:firstordersplit}
We split the relaxation and transport operators in \textcolor{black}{\eqref{QBGKdt}}. 
\paragraph{Relaxation.} We perform the relaxation step in each spatial cell by a backward Euler method
\begin{align} \label{eq:relax}
\frac{f_k^{\ast} -f_k^\ell}{\Delta t} = \mathcal{R}_{k}(f_k^{\ast},f_j^{\ast}),
\end{align}
which can be rewritten into the convex combination
\begin{align} \label{eq:update_split}
    f_k^{\ast} = d_k f_k^{\ell} + d_k \Delta t ( \tilde{\nu}_{kk}\E_{kk}^{\ast} + \tilde{\nu}_{kj} \E_{kj}^{\ast}),
\end{align}
with 
\begin{align*} 
    d_k = \frac{1}{1+ \Delta t (\tilde{\nu}_{kk} + \tilde{\nu}_{kj}) }.
\end{align*}
The equation \eqref{eq:update_split} represents an explicit update formula for $f_k^{\ast}$
provided that $\E_{kk}^{\ast}$ and $\E_{kj}^{\ast}$ can be expressed as functions of $f_k^\ell$. In Section \ref{sec:generaltime} we show how to determine $\E_{kk}^{\ast}$ and $\E_{kj}^{\ast}$ in a structure-preserving way.
\paragraph{Transport.} We compute the transport in $x$ for $f_k^{\ell+1}$ by a forward Euler method with initial data $f_k^{\ast}$:
\begin{align} \label{eq:transport_x}
\frac{f_k^{\ell+1} -f_k^{\ast}}{\Delta t}+ \mathcal{T}_k(f_k^{\ast}) = 0.
\end{align}
Details on the numerical approximation of $\mathcal{T}_k$ are presented in section \ref{sec:space}.

\subsubsection{Second-order IMEX Runge-Kutta} \label{subsec:secondorderIMEX}

We use the following Butcher tableaux \cite{ARS97} for a second-order approach
\begin{align*}
\renewcommand\arraystretch{1.2}
\begin{array}
{c|ccc}
0\\
\gamma &0& \gamma \\
1 &0&1-\gamma&\gamma \\
\hline
& 0&1-\gamma&\gamma
\end{array}
\hspace*{2cm}
\renewcommand\arraystretch{1.2}
\begin{array}
{c|ccc}
0\\
\gamma & \gamma \\
1 &\delta &1-\delta&0 \\
\hline
& \delta &1-\delta&0
\end{array}
\end{align*}
with 
\begin{equation*}
    \gamma = 1- \frac{\sqrt{2}}{2} 
    \quad \text{and} \quad
    \delta = 1-\frac{1}{2 \gamma}.
\end{equation*}
The left table applies to the relaxation part, and the right table applies to the transport terms. 
This IMEX Runge-Kutta scheme is L-stable and globally stiffly accurate.
\\
Applying this method to \textcolor{black}{\eqref{eq:relax} and \eqref{eq:transport_x}} and using the constants 
\begin{align}
    d_k = \frac{1}{1+ \gamma \Delta t (\tilde{\nu}_{kk} + \tilde{\nu}_{kj}) },
    \label{di}
\end{align}
we can write the stages in the scheme as convex combination of three terms
\begin{subequations}  \label{eq:update_ARS}
\begin{alignat}{3}
f_k^{(1)} &= d_k G_k^{(1)}  &&+ d_k\gamma \Delta t \,  \tilde{\nu}_{kk} \E_{kk}^{(1)} &&+  d_k  \gamma \Delta t \, \tilde{\nu}_{kj}\E_{kj}^{(1)}  \\
f_k^{(2)} &= d_k G_k^{(2)} &&+ d_k \gamma \Delta t \,  \tilde{\nu}_{kk}\E_{kk}^{(2)} &&+ d_k \gamma \Delta t \,  \tilde{\nu}_{kj}\E_{kj}^{(2)}, \\
f_k^{\ell+1} &= f_k^{(2)}
\end{alignat}
\end{subequations}
where
\begin{subequations} 
\begin{align}
G_k^{(1)}  &= f_k^\ell - \Delta t \, \gamma\, \mathcal{T}_k(f_k^\ell) \\
G_k^{(2)} &=  f_k^\ell - \Delta t \, \delta\, \mathcal{T}_k(f_k^\ell)-\Delta t \,(1-\delta) \mathcal{T}_k(f_k^{(1)}) + \Delta t \,(1-\gamma) \mathcal{R}_{k}(f_k^{(1)},f_j^{(1)})
\end{align}
\end{subequations}
depend on known data.  For each stage, we have to determine the corresponding values of the target functions in order to update the distribution functions. In the following section, we explain how this can be achieved.

\subsubsection{General implicit solver} \label{sec:generaltime}
We write the implicit updates in \eqref{eq:update_split} and \eqref{eq:update_ARS} in a generic steady state form
\begin{align} \label{eq:update_general}
\psi_k  = d_k G_k  + d_k \gamma\Delta t  (\tilde{\nu}_{kk} \E_{kk}  + \tilde{\nu}_{kj} \E_{kj} ). 
\end{align}
The functions $\E_{kk}$ and $\E_{kj}$ are the unique target functions associated to $\psi_k$, 
\begin{equation*}
    d_k = \frac{1}{1+\gamma\Delta t (\tilde{\nu}_{kk}+\tilde{\nu}_{kj})},
\end{equation*}
and $G_k$ is a known function.  We want to express $\E_{kk} $ and $\E_{kj} $ as functions of $G_k$ and $G_j$ so that \eqref{eq:update_general} is an explicit update formula for $\psi_k$.  
In Section \ref{sec:model}, the existence and uniqueness of $\E_{kk}$ and $\E_{kj}$ are \textcolor{black}{presented} by algebraic considerations. In order to determine their values numerically, we follow a different approach \textcolor{black}{in Appendix} \ref{sec2}. We introduce the notation
\begin{align*}
    \mathbf{p}_k(p) := (1,p, \frac{|p|^2}{2m_k})^\top, \quad k=1,2.
\end{align*}
Applying the conservation properties \eqref{conserv 1} and \eqref{conserv 2} to \eqref{eq:update_general} leads to 
\begin{align*}
\begin{split}
\int \tilde{\nu}_{11} \E_{11}  \,\mbp_1 dp +
\int \tilde{\nu}_{22} \E_{22}  \,\mbp_2 dp +
\int \tilde{\nu}_{12} \E_{12}  \,\mbp_1 dp + 
\int \tilde{\nu}_{21} \E_{21}  \,\mbp_2 dp \end{split} \nonumber \\ 
\begin{split} \overset{\eqref{conserv 1},\eqref{conserv 2}}{=}
\int  \tilde{\nu}_{11} \psi_1  \,\mbp_1 dp +
\int  \tilde{\nu}_{22} \psi_2  \,\mbp_2 dp +
\int  \tilde{\nu}_{12} \psi_1  \,\mbp_1 dp +
\int  \tilde{\nu}_{21} \psi_2  \,\mbp_2 dp 
\end{split} \nonumber \\ 
\begin{split}
&\hspace{0.25cm}\overset{(\ref{eq:update_general})}{=}
  \int  \tilde{\nu}_{11}   d_1 \left[ G_1  +  \Delta t \, \gamma  \tilde{\nu}_{11} \E_{11}  + \Delta t \, \gamma  \tilde{\nu}_{12} \E_{12}  \right] \,\mbp_1 dp  
\\& \hspace*{1.2cm}+  \int \tilde{\nu}_{22}  d_2 \left[ G_2  + \Delta t  \, \gamma  \tilde{\nu}_{22}  \E_{22}  + \Delta t \, \gamma \tilde{\nu}_{21}  \E_{21}  \right]   \,\mbp_2 dp  \\
&\hspace*{1.2cm}  
 + \int  \tilde{\nu}_{12}   d_1 \left[ G_1  +  \Delta t \, \gamma  \tilde{\nu}_{11} \E_{11}  + \Delta t \, \gamma  \tilde{\nu}_{12} \E_{12}  \right] \,\mbp_1 dp  
\\&\hspace*{1.2cm}+  \int \tilde{\nu}_{21}  d_2 \left[ G_2  + \Delta t  \, \gamma  \tilde{\nu}_{22}  \E_{22}  + \Delta t \, \gamma \tilde{\nu}_{21}  \E_{21}  \right]   \,\mbp_2 dp  
\end{split}
\end{align*}
Sorting terms yields the following moment equations
\begin{align*} 
\begin{split}
\int d_1  \left( \tilde{\nu}_{11}\E_{11}+ \tilde{\nu}_{12}\E_{12}\right) \mbp_1 dp
+  \int d_2   \left( \tilde{\nu}_{21}\E_{21}+ \tilde{\nu}_{22}\E_{22}\right) \mbp_2 dp \\
= \int  d_1(\tilde{\nu}_{11}+\tilde{\nu}_{12})  G_1  \mbp_1 + \int  d_2 (\tilde{\nu}_{21}+\tilde{\nu}_{22})   G_{2}  \mbp_2 dp
\end{split}
\end{align*}
which provide a set of constraints to determine  $\E_{kk}$ and $\E_{kj}$ from the given data $G_k$ and $G_j$. These  represent first-order optimality conditions associated to the minimization of the convex  potential function 
\begin{align}\label{eq:potentialfunction}
\begin{split}
 \varphi_{\rm tot}(\alpha_1,\alpha_2,\alpha) &=
\int  \left[ d_1 \tilde{\nu}_{11} w(\E_{11}) + d_2 \tilde{\nu}_{22} w(\E_{22}) +  d_1 \tilde{\nu}_{12} w(\E_{12}) + d_2 \tilde{\nu}_{21} w(\E_{21}) \right] dp \\ 
&+ \quad \mu_1 \cdot \alpha_1 + \mu_2 \cdot \alpha_2 + \mu \cdot \alpha, 
\end{split}
\end{align} 
where $\alpha_k=(\alpha^0_k, \alpha^1_k, \alpha^2_k)$ and $\alpha = (\alpha_{12}^0,\alpha_{21}^0,\alpha^1,\alpha^2)^\top$;
the auxiliary function reads
\textcolor{black}{
\begin{align*}
w(\E_{kj,\tau_{k}}) 
=
\begin{cases}
	-\E_{kj,0} \qquad  &\text{for } \tau_{k} = 0, \\
	 \log(1-\E_{kj,+1})\quad &\text{for } \tau_{k} =  +1, \\
	 -\log(1+\E_{kj,-1})\quad &\text{for } \tau_{k} =  -1.
\end{cases}
\end{align*}}
the given moments are
\begin{align} \label{eq:input_mu_i}
\mu_k = 
\begin{pmatrix}
\mu_{k}^0 \\ \mu_{k}^1 \\ \mu_{k}^2
\end{pmatrix} =
\int d_k \tilde{\nu}_{kk}  G_k \mbp_k dp,
\end{align}
for $k=1,2$; and
\begin{align} \label{eq:input_mu}
\mu = 
\begin{pmatrix}
\mu_{12}^0 \\ \mu_{21}^0 \\ \mu^1 \\ \mu^2
\end{pmatrix} &= \int \left[ 
\begin{pmatrix}
1 \\ 0 \\ p\\ \frac{|p|^2}{2m_1} 
\end{pmatrix}
  d_1 \tilde{\nu}_{12}  G_1 +
\begin{pmatrix}
0 \\ 1 \\ p \\ \frac{|p|^2}{2m_2} 
\end{pmatrix}
 d_2 \tilde{\nu}_{21}  G_2  \right] dp.
\end{align}
The minimization problem can be decoupled as follows:
\begin{proposition}
The components of the minimizer of (\ref{eq:potentialfunction}) can be found by minimizing the following three convex potential functions independently:
  \begin{align}
     \varphi_k(\alpha_k) &= \int d_k \tilde{\nu}_{kk} \, w(\E_{kk}) dp + \mu_k\cdot \alpha_k \quad \text{for} \quad k=1,2 \quad \text{and} \label{eq:potentialfunction_single} \\
     \varphi(\alpha) &= \int  \left[ d_1 \tilde{\nu}_{12} w(\E_{12}) + d_2 \tilde{\nu}_{21} w(\E_{21}) \right] dp  + \mu \cdot \alpha \label{eq:potentialfunction_mixed}
 \end{align}
and the minimum of \eqref{eq:potentialfunction} is the sum of their minima.
\end{proposition}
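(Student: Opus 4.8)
The plan is to prove the proposition by showing that $\varphi_{\rm tot}$ is \emph{separable}, i.e.\ that it equals the exact sum $\varphi_{\rm tot}(\alpha_1,\alpha_2,\alpha)=\varphi_1(\alpha_1)+\varphi_2(\alpha_2)+\varphi(\alpha)$ in which the three summands depend on \emph{disjoint} blocks of the multiplier variables. Once this is established, the statement follows from the elementary fact that minimizing a sum of functions over disjoint variable blocks reduces to minimizing each summand separately, and that the minimal value is then the sum of the three minima.

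First I would make precise the dependence of each target function on the multipliers. From the entropy-minimization/duality setup (the single-species case in \cite{EMV} and the mixture case in Appendix \ref{sec2}), the pointwise first-order optimality condition expresses each equilibrium $\E_{kj}$ as $(h_{\tau}')^{-1}$ evaluated at a linear form in $p$ whose coefficients are exactly the Lagrange multipliers of the constraints in which $\E_{kj}$ participates. The intra-species constraints \eqref{conserv 1} for species $k$ involve only the block $\alpha_k=(\alpha^0_k,\alpha^1_k,\alpha^2_k)$, so $\E_{kk}=\E_{kk}(\alpha_k)$; the inter-species constraints \eqref{conserv 2}, namely the two separate density equalities together with the two \emph{joint} momentum and energy equalities, involve only $\alpha=(\alpha_{12}^0,\alpha_{21}^0,\alpha^1,\alpha^2)$, with $\alpha_{12}^0,\alpha_{21}^0$ attached to the individual densities and $\alpha^1,\alpha^2$ shared, so that $\E_{12}=\E_{12}(\alpha)$ and $\E_{21}=\E_{21}(\alpha)$. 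The crucial structural point is that no multiplier is shared across the three blocks $\{$intra-$1$, intra-$2$, inter$\}$.

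Given this, I would substitute into \eqref{eq:potentialfunction} and use linearity of the integral to group the integrand as $[d_1\tilde{\nu}_{11}w(\E_{11})]+[d_2\tilde{\nu}_{22}w(\E_{22})]+[d_1\tilde{\nu}_{12}w(\E_{12})+d_2\tilde{\nu}_{21}w(\E_{21})]$, the three brackets depending respectively on $\alpha_1$, $\alpha_2$, and $\alpha$ alone. The linear term $\mu_1\cdot\alpha_1+\mu_2\cdot\alpha_2+\mu\cdot\alpha$ splits in the same way, and the data $\mu_1,\mu_2,\mu$ from \eqref{eq:input_mu_i}--\eqref{eq:input_mu} are fixed and introduce no coupling. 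This yields exactly $\varphi_{\rm tot}=\varphi_1+\varphi_2+\varphi$ with the three functions as in \eqref{eq:potentialfunction_single}--\eqref{eq:potentialfunction_mixed}. Because the gradient of $\varphi_{\rm tot}$ then decomposes as $(\nabla_{\alpha_1}\varphi_1,\nabla_{\alpha_2}\varphi_2,\nabla_{\alpha}\varphi)$, the stationarity system decouples into the three independent systems $\nabla_{\alpha_1}\varphi_1=0$, $\nabla_{\alpha_2}\varphi_2=0$, $\nabla_{\alpha}\varphi=0$. Invoking the convexity of each $\varphi_k$ and $\varphi$, inherited from the convexity of $w$ (equivalently, strict convexity of the primal entropy), upgrades each stationary point to the unique global minimizer; concatenating them gives the minimizer of $\varphi_{\rm tot}$, and summing the three minimal values gives its minimum.

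I expect the only genuine obstacle to be the bookkeeping in the structural step: verifying that the shared momentum and energy multipliers $\alpha^1,\alpha^2$ enter both $\E_{12}$ and $\E_{21}$ while the blocks $\alpha_1,\alpha_2$ remain entirely uninvolved, i.e.\ that the constraint-to-multiplier correspondence really is block-diagonal across the three groups. After that, the decoupling of the minimization is the standard property of separable convex functions, with the required convexity cited from the single-species treatment \cite{EMV} and Appendix \ref{sec2}.
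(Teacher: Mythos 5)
Your proposal is correct and follows essentially the same route as the paper, whose entire proof is the one-line observation that $\varphi_{\rm tot}(\alpha_1,\alpha_2,\alpha) = \varphi_1(\alpha_1) + \varphi_2(\alpha_2) + \varphi(\alpha)$ with the three summands depending on disjoint multiplier blocks; you simply spell out the bookkeeping (that $\E_{kk}$ depends only on $\alpha_k$ and $\E_{12},\E_{21}$ only on $\alpha$) that the paper declares trivial. Your extra step invoking convexity to pass through stationarity conditions is harmless but unnecessary, since the identity $\inf$ of a sum over disjoint variable blocks equals the sum of the $\inf$'s holds with no convexity assumption.
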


\begin{proof}
The statement is trivial because  $\varphi_{\rm tot}(\alpha_1,\alpha_2,\alpha) = \varphi_1(\alpha_1) + \varphi_2(\alpha_2) + \varphi_1(\alpha)$.
\end{proof}

The minimum of each potential function in \eqref{eq:potentialfunction_single} and \eqref{eq:potentialfunction_mixed} is found using Newton's method for convex optimization. More details are given in Section \ref{sec:velocity}. 

Actually, we can link these potential functions to dual problems when we reformulate the modelling problem by using Lagrange functionals. For intra-species interactions, the Lagrange functional reads 
\begin{align} \label{eq:Lagrange-single}
    L_{k} (g,\lambda) = \int h_{\tau_k}(g) dp - \lambda \cdot \int \mbp_k (g-f_k) dp
\end{align}
using
$h_{\tau_k}(g)$ given by \eqref{entropy_h}. 
The first integral in \eqref{eq:Lagrange-single} is the entropy functional; the other integrals describe the conservation properties as constraints. \textcolor{black}{Substituting $g=\frac{1}{e^{-\lambda \cdot \mbp_k} +\tau_k}$ to \eqref{eq:Lagrange-single},} the Lagrange multipliers $\lambda$ solve the dual problem
\begin{align} \label{eq:dual-intra}
    \alpha_k = \operatorname*{argmin}_{\lambda \in \Lambda_k} \int w(\E_{kk}(\lambda)) dp + \lambda \cdot \int \mbp_k f_k dp
\end{align}
where $\Lambda_k=\{\lambda \in \mathbb{R}^5\,|\, \int \E_{kk}(\lambda) (1+|p|^2)dp < \infty \}$.
Analogously, we can formulate the dual problem for inter-species interactions:
\begin{align} \label{eq:dual-inter}
\begin{split}
    (\alpha_{12},\alpha_{21})= \operatorname*{argmin}_{(\lambda_{12},\lambda_{21}) \in \Lambda_{12}} \Bigg\{ \int& w(\E_{12}(\lambda)) +  w(\E_{21}(\lambda)) dp + \lambda_{12}^0  \int  f_1 dp + \lambda_{21}^0  \int  f_2 dp \\
    &+ \lambda^1 \cdot  \int p (f_1+f_2) dp + \lambda^2 \int |p|^2 \left( \frac{1}{2m_1} f_1 + \frac{1}{2m_2}f_2\right) dp\Bigg\}
\end{split}
\end{align}
for \textcolor{black}{$\alpha_{kj}=(\alpha_{kj}^0,\alpha^1,\alpha^2)$ and where $\Lambda_{12}=\{(\lambda_{12}^0,\lambda_{21}^0,\lambda^1,\lambda^2) \in \mathbb{R}^6\,|\, \int \E_{kj}(\lambda_{kj}) (1+|p|^2)dp < \infty \text{ for } k,j=1,2;k\neq j\}$.}
We recognize the close relationship of \eqref{eq:potentialfunction_single} with \eqref{eq:dual-intra}, respective of \eqref{eq:potentialfunction_mixed} with \eqref{eq:dual-inter}. The dual problems have unique solutions according to Appendix \ref{sec2}. This is inherited to the potential functions because $d_k\tilde{\nu}_{kj}$ is independent of $p$. 

\subsection{Space discretization} \label{sec:space}

We assume a slab geometry, i.e. $\partial_{x^2} f_k = \partial_{x^3} f_k = 0$. So we reduce the physical space dimension to one dimension and set $x := x^1$ while the momentum domain remains three dimensional $(p=(p^1,p^2,p^3))$.  We divide the spatial domain $[x_{\rm min},x_{\rm max}]$ into uniform cells $I_i = [x_i-\frac{\Delta x}{2}, x_i+\frac{\Delta x}{2}]$ for $i \in \{0,\dots,I\}$. 

We employ a second-order finite volume framework using approximate cell-averaged quantities 
\begin{equation*}
    f_{k,i}^\ell \approx \frac{1}{\Delta x}\int_{I_i} f_k(x,p,t^{\ell}) dx.
\end{equation*}
The relaxation operators are approximated to second order by
\begin{equation*}
    \mathcal{R}_{k,i}^\ell 
        = \mathcal{R}_{k}(f_{k,i}^\ell,f_{j,i}^\ell) 
        \approx \frac{1}{\Delta x} \int_{I_i} \mathcal{R}\left(\,f_k(x,p,t^{\ell}),f_j(x,p,t^{\ell}) \,\right) dx.
\end{equation*}
Whereas the transport operator $\mathcal{T}_k$ is discretized with numerical fluxes $\mathscr{F}_{i+\frac{1}{2}}$ by 
\begin{align*} 
\mathcal{T}_k\left( g \right) \approx \mathcal{T}_{i;k}(g) = \frac{1}{\Delta x} \left( \textcolor{black}{\mathscr{F}_{i+\frac{1}{2}}(g)} - \mathscr{F}_{i-\frac{1}{2}}(g) \right)
\end{align*}
for any grid function $g = \{g_{i}\}$. We follow \cite{MieussensStruchtrup2004} and use
\begin{align*} 
    \mathscr{F}_{i+\frac{1}{2}}(g) = \frac{p^1}{2m} \left( g_{i+1} + g_{i} \right) - \frac{\vert p^1 \vert }{2m} \left( g_{i+1} - g_{i} - \phi_{i+\frac{1}{2}}(g) \right)
\end{align*}
where $\phi_{i+\frac{1}{2}}$ is a flux limiter. The choice $\phi_{i+\frac{1}{2}}=0$ leads to a first-order approximation, and a second-order method is provided by 
\begin{align*} 
\phi_{i+\frac{1}{2}}(g) = \operatorname{minmod}\left( (g_{i}-g_{i-1}), (g_{i+1}-g_{i}), (g_{i+2}-g_{i+1})\right)
\end{align*}
where
\begin{align*}
    \operatorname{minmod}(a,b,c) = 
    \begin{cases}
     s \min(|a|,|b|,|c|), \quad & \mathrm{sign}(a) = \mathrm{sign}(b)=\mathrm{sign}(c) =:s,\\
     0, \quad &\text{otherwise}.
    \end{cases}
\end{align*}
We guarantee positivity during a simple forward Euler update of \eqref{eq:transport_x} by enforcing the CFL condition 
\begin{align*} 
   \Delta t < \beta \frac{m\Delta x}{\max |p^1|}
\end{align*}
with $\beta=1$ for the first-order flux and $\beta = \frac{2}{3}$ for the second-order flux.  (See Proposition \ref{prop:positivity_1st_order}.)

\subsection{Properties of the semi-discrete scheme}
\label{sec:discreteproperties}

In this section, we review the positivity preservation, conservation properties, and the entropy behavior of the semi-discrete scheme.

\subsubsection{Positivity of distribution functions}

The first-order time stepping scheme in Section \ref{subsec:firstordersplit} preserves positivity for both first- and second-order numerical fluxes in space; see Proposition \ref{prop:positivity_1st_order}. 
We discuss the positivity for the second-order scheme \ref{subsec:secondorderIMEX} in Proposition \ref{prop:positivity_2nd_order}, and give a sufficient criterion for the space homogeneous case.
Additionally, we show that the upper bound for distribution functions of fermions is preserved by our scheme; see Proposition \ref{prop:fermion-smaller-1}.

\begin{proposition}\label{prop:positivity_1st_order}
The first-order time discretization in Section \ref{subsec:firstordersplit} together with the space discretization described in Section \ref{sec:space} is positivity preserving, provided that
\begin{align*} 
     \Delta t \leq \beta \frac{m_k\Delta x}{\max |p^1|},
 \end{align*}
 with $\beta=1$ and $\beta = \frac{2}{3}$ for the first-order and second-order fluxes, respectively. 
\end{proposition}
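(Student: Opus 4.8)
The plan is to exploit the splitting structure of the first-order scheme in Section~\ref{subsec:firstordersplit}: the relaxation update \eqref{eq:update_split} and the transport update \eqref{eq:transport_x} are applied in sequence, so it suffices to show that each half-step maps nonnegative data to nonnegative data. Since the composition of positivity-preserving maps is positivity preserving, this yields the claim, and the CFL restriction will enter only through the explicit transport step.

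For the relaxation step no CFL condition is needed: formula \eqref{eq:update_split} already exhibits $f_k^{\ast}$ as a convex combination of $f_k^\ell$, $\E_{kk}^{\ast}$ and $\E_{kj}^{\ast}$. Indeed, the weights $d_k$, $d_k\Delta t\,\tilde\nu_{kk}$ and $d_k\Delta t\,\tilde\nu_{kj}$ are nonnegative and, by the definition of $d_k$, sum to $d_k\big(1+\Delta t(\tilde\nu_{kk}+\tilde\nu_{kj})\big)=1$. The target functions $\E_{kk}^{\ast},\E_{kj}^{\ast}$ are of the form $1/(e^{-\lambda\cdot\mbp_k}+\tau_k)\ge 0$, so together with $f_k^\ell\ge 0$ we obtain $f_k^{\ast}\ge 0$ in every cell.

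For the transport step I would write the forward Euler update $f_{k,i}^{\ell+1}=f_{k,i}^{\ast}-\tfrac{\Delta t}{\Delta x}\big(\mathscr{F}_{i+\frac12}(f_k^{\ast})-\mathscr{F}_{i-\frac12}(f_k^{\ast})\big)$ cell-by-cell and split according to the sign of $p^1$; set $g:=f_k^{\ast}$ and $\lambda:=\Delta t\,p^1/(m_k\Delta x)$ and treat $p^1\ge 0$, the case $p^1<0$ being symmetric by upwinding from the right. For the first-order flux ($\phi\equiv 0$) the numerical flux reduces to the upwind value $\mathscr{F}_{i+\frac12}=\tfrac{p^1}{m_k}g_i$, and the update becomes $f_{k,i}^{\ell+1}=(1-\lambda)g_i+\lambda g_{i-1}$, which is a convex combination of nonnegative values precisely when $0\le\lambda\le1$, i.e. $\Delta t\le m_k\Delta x/\max|p^1|$; this is the case $\beta=1$.

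The main obstacle is the second-order flux, where the minmod limiter couples four cells. Writing $\Delta_{i+\frac12}:=g_{i+1}-g_i$, for $p^1\ge0$ one has $\mathscr{F}_{i+\frac12}=\tfrac{p^1}{m_k}g_i+\tfrac{p^1}{2m_k}\phi_{i+\frac12}$, so the update can be put in the incremental form $f_{k,i}^{\ell+1}=g_i-C_{i-\frac12}\,\Delta_{i-\frac12}$ with
\begin{equation*}
C_{i-\frac12}=\lambda\left(1+\tfrac12\,\frac{\phi_{i+\frac12}}{\Delta_{i-\frac12}}-\tfrac12\,\frac{\phi_{i-\frac12}}{\Delta_{i-\frac12}}\right),
\end{equation*}
where the two limiter terms vanish (and $f_{k,i}^{\ell+1}=g_i$) whenever $\Delta_{i-\frac12}=0$. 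The key step is to bound $C_{i-\frac12}$ using the defining properties of $\operatorname{minmod}$: since $\Delta_{i-\frac12}$ is one of the arguments of both $\phi_{i+\frac12}$ and $\phi_{i-\frac12}$, each limiter shares the sign of $\Delta_{i-\frac12}$ and is bounded in magnitude by $|\Delta_{i-\frac12}|$, so both ratios lie in $[0,1]$. Hence the parenthesis lies in $[\tfrac12,\tfrac32]$, giving $0\le C_{i-\frac12}\le\tfrac32\lambda$, and $f_{k,i}^{\ell+1}=(1-C_{i-\frac12})g_i+C_{i-\frac12}g_{i-1}$ is a convex combination of nonnegative values as soon as $\tfrac32\lambda\le1$, i.e. $\Delta t\le\tfrac23\,m_k\Delta x/\max|p^1|$; this is $\beta=\tfrac23$. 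Combining the two half-steps, $f_k^{\ell+1}\ge0$ whenever $f_k^\ell,f_j^\ell\ge0$, which is the assertion.
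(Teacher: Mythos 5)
Your proof is correct and is essentially the argument the paper relies on: the paper's own "proof" merely cites Proposition 5.1 of \cite{velocity-dependent}, whose strategy is exactly your splitting into a convex-combination relaxation step (weights summing to one by the definition of $d_k$) plus an upwind transport step under the CFL condition. Your limiter bound $0 \le \phi_{i\pm\frac12}/\Delta_{i-\frac12} \le 1$, which gives $C_{i-\frac12} \in [\tfrac{\lambda}{2}, \tfrac{3\lambda}{2}]$ and hence the constant $\beta = \tfrac{2}{3}$, is precisely how that constant arises, so you have in effect supplied the details the paper leaves to the reference.
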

\begin{proof}
The proof can be performed analogously to the proof of Proposition 5.1 in \cite{velocity-dependent}.
\end{proof}

Second-order time-stepping makes it more difficult to guarantee positivity. 
Nevertheless, we derive some sufficient conditions on $ \Delta t$ in order to preserve positivity  in the second-order scheme presented in Section \ref{subsec:secondorderIMEX}.

\begin{proposition} \label{prop:positivity_2nd_order}
For the space homogeneous case, the second-order IMEX scheme presented in Section \ref{subsec:secondorderIMEX} is positivity preserving provided that
\begin{align} \label{eq:timerestriction-IMEX}
\Delta t \leq \frac{1}{(1-2\gamma)(\tilde{\nu}_{kk}+\tilde{\nu}_{kj})}
\end{align}
 for $k,j=1,2.$
\end{proposition}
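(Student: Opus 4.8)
The plan is to use that each stage in \eqref{eq:update_ARS} is a convex combination whose ``known data'' term carries all the potential negativity, while the equilibria $\E_{kk},\E_{kj}$ are Fermi-Dirac, Bose-Einstein, or Maxwellian distributions and hence non-negative by construction. I argue by induction on $\ell$, assuming $f_k^\ell \geq 0$. In the space-homogeneous case the transport operator $\mathcal{T}_k$ vanishes, so $G_k^{(1)} = f_k^\ell$ and $G_k^{(2)} = f_k^\ell + \Delta t(1-\gamma)\mathcal{R}_{k}(f_k^{(1)},f_j^{(1)})$. From \eqref{di} one has $d_k + d_k\gamma\Delta t(\tilde{\nu}_{kk}+\tilde{\nu}_{kj}) = 1$, so the first stage exhibits $f_k^{(1)}$ as a convex combination of $f_k^\ell \geq 0$ and the non-negative equilibria; positivity of $f_k^{(1)}$ is then immediate.

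The core of the argument is to show $G_k^{(2)} \geq 0$ under the stated restriction. Writing $\tilde{\nu}_k := \tilde{\nu}_{kk}+\tilde{\nu}_{kj}$ and the non-negative quantity $S^{(1)} := \tilde{\nu}_{kk}\E_{kk}^{(1)} + \tilde{\nu}_{kj}\E_{kj}^{(1)}$, the relaxation operator reads $\mathcal{R}_{k}(f_k^{(1)},f_j^{(1)}) = S^{(1)} - \tilde{\nu}_k f_k^{(1)}$, while the first stage reads $f_k^{(1)} = d_k f_k^\ell + d_k\gamma\Delta t\,S^{(1)}$. Substituting the first into $G_k^{(2)}$ and eliminating $f_k^{(1)}$ with the second, I collect everything into the form $G_k^{(2)} = A\,f_k^\ell + B\,S^{(1)}$. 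The key algebraic simplification is the identity $1 - \tilde{\nu}_k d_k\gamma\Delta t = d_k$, immediate from \eqref{di}, which forces $B = \Delta t(1-\gamma)d_k > 0$ with no condition on $\Delta t$; thus only the coefficient of $f_k^\ell$ can cause trouble.

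That coefficient simplifies to
\begin{equation*}
A = \frac{1 - \Delta t\,\tilde{\nu}_k(1-2\gamma)}{1 + \gamma\Delta t\,\tilde{\nu}_k}.
\end{equation*}
Since $1-2\gamma = \sqrt{2}-1 > 0$ for the chosen $\gamma = 1-\tfrac{\sqrt{2}}{2}$, requiring $A \geq 0$ is exactly $\Delta t \leq 1/[(1-2\gamma)\tilde{\nu}_k]$, which is the restriction \eqref{eq:timerestriction-IMEX}. Under this bound $G_k^{(2)} \geq 0$, and then $f_k^{(2)} = d_k G_k^{(2)} + d_k\gamma\Delta t(\tilde{\nu}_{kk}\E_{kk}^{(2)} + \tilde{\nu}_{kj}\E_{kj}^{(2)})$ is once more a convex combination of non-negative terms, so $f_k^{\ell+1} = f_k^{(2)} \geq 0$. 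The only real obstacle is the bookkeeping in this reduction; the cancellation $1-\tilde{\nu}_k d_k\gamma\Delta t = d_k$ is what collapses the analysis to the single sign condition matching \eqref{eq:timerestriction-IMEX}, and the positivity of the explicit correction $(1-\gamma)\mathcal{R}_k$ is precisely what dictates the restriction on $\Delta t$.
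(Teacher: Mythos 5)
Your proof is correct and follows essentially the same route as the paper: the paper delegates the details to Proposition 5.2 of \cite{velocity-dependent}, whose argument is exactly this stage-wise computation, and the paper itself performs the identical algebra (expanding $G_k^{(2)}$, eliminating $f_k^{(1)}$, and using $1-\tilde{\nu}_k d_k\gamma\Delta t = d_k$) in the proof of Proposition \ref{prop:fermion-smaller-1-ARS} for the fermion upper bound. Your coefficient $A = \bigl(1-\Delta t\,\tilde{\nu}_k(1-2\gamma)\bigr)/\bigl(1+\gamma\Delta t\,\tilde{\nu}_k\bigr)$ and the resulting sign condition reproduce precisely the restriction \eqref{eq:timerestriction-IMEX}, so the proposal fills in the cited proof faithfully.
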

\begin{proof}
The proof can be performed analogously to the proof of proposition 5.2 in \cite{velocity-dependent}.
\end{proof}

For large collision frequencies $\nu_{kj}$, the time step condition \eqref{eq:timerestriction-IMEX} can be restrictive. So one might be interested in enforcing the milder (but still sufficient) local condition
\begin{align}\label{eq:timestep-restriction-local-IMEX}
\Delta t \leq \frac{f_k^{\ell}}{(1-\gamma)  \left[(\tilde{\nu}_{kk}+\tilde{\nu}_{kj})f_k^{(1)} - (\tilde{\nu}_{kk}\E_{kk}^{(1)} + \tilde{\nu}_{kj}\E_{kj}^{(1)}) \right]}.
\end{align}
Large collision frequencies push the numerical kinetic distribution to the corresponding target function. Hence, the denominator in \eqref{eq:timestep-restriction-local-IMEX} becomes large, and the condition is not restrictive.\\
\subsubsection{Boundedness of the distribution function for fermions}
A distribution function of a fermion  has the additional upper bound $f<1$. Our scheme preserves this property which is shown in the following propositions.

\begin{proposition} \label{prop:fermion-smaller-1}
If $f_k$ represents the distribution function of a fermion with $f_k^\ell < 1$, the time discretization in Section \ref{subsec:firstordersplit} together with the space discretization described in Section \ref{sec:space} leads to $f_k^{\ell+1}<1$.
\end{proposition}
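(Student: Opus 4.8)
The plan is to track the evolution of $f_k^{\ell+1}$ through the two substeps of the first-order splitting—relaxation followed by transport—and show that the fermion upper bound $f_k<1$ is preserved at each stage separately. Since the composition of two bound-preserving maps is bound-preserving, it suffices to handle the relaxation update \eqref{eq:update_split} and the transport update \eqref{eq:transport_x} in turn.

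\textbf{Relaxation step.} First I would examine the convex-combination form \eqref{eq:update_split},
\begin{align*}
f_k^{\ast} = d_k f_k^{\ell} + d_k \Delta t\,(\tilde{\nu}_{kk}\E_{kk}^{\ast} + \tilde{\nu}_{kj}\E_{kj}^{\ast}),
\end{align*}
and observe that, by the definition of $d_k$, the coefficients $d_k$, $d_k\Delta t\,\tilde{\nu}_{kk}$, and $d_k\Delta t\,\tilde{\nu}_{kj}$ are nonnegative and sum to exactly $1$; hence $f_k^{\ast}$ is a genuine convex combination of $f_k^{\ell}$, $\E_{kk}^{\ast}$, and $\E_{kj}^{\ast}$. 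Because each of these three quantities is a fermion distribution (the target functions $\E_{kk}^{\ast},\E_{kj}^{\ast}$ are Fermi--Dirac distributions of the form \eqref{parameters} with $\tau=+1$, which are pointwise strictly less than $1$ by inspection, and $f_k^{\ell}<1$ by hypothesis), the convex combination $f_k^{\ast}$ is strictly bounded above by $1$. The key point to record here is that the Fermi--Dirac target $\mathcal{K}_{kj}=1/(e^{\,\cdots}+1)<1$ automatically, so no extra assumption on the equilibrium parameters is needed.

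\textbf{Transport step.} Next I would treat the forward-Euler transport update \eqref{eq:transport_x} with the flux form of Section \ref{sec:space}. Rather than reprove boundedness from scratch, I would invoke the positivity argument of Proposition \ref{prop:positivity_1st_order} applied to the auxiliary function $1-f_k^{\ast}$: since the transport operator is linear in $f_k$ and the numerical flux $\mathscr{F}_{i+1/2}$ is a linear upwind-type flux, the quantity $1-f_k$ satisfies the same transport update, and the CFL condition that guarantees $f_k\geq 0$ equally guarantees $1-f_k\geq 0$, i.e. $f_k\leq 1$. Concretely, under the CFL restriction stated in Proposition \ref{prop:positivity_1st_order}, the cell-updated value $f_{k,i}^{\ell+1}$ is expressible as a convex combination of neighbouring cell values $f_{k,i-1}^{\ast},f_{k,i}^{\ast},f_{k,i+1}^{\ast}$ (and, for the second-order flux, with the limiter $\phi_{i+1/2}$ handled exactly as in the positivity proof), each of which is $<1$ from the relaxation step; hence $f_{k,i}^{\ell+1}<1$.

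\textbf{Main obstacle.} The routine part is the relaxation step, where the convex-combination structure makes boundedness immediate. The delicate part is the transport step with the second-order limited flux: one must verify that the minmod-limited update still admits a convex-combination (or monotone) representation for $1-f_k$, so that the bound transfers. I expect this to be where the $\beta=\tfrac{2}{3}$ factor in the CFL condition is needed, exactly as in the positivity analysis; the cleanest route is to appeal to the symmetry of the scheme under $f_k\mapsto 1-f_k$ and cite the corresponding step of Proposition \ref{prop:positivity_1st_order} rather than redo the limiter bookkeeping.
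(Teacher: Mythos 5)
Your proposal is correct, and for the relaxation step and the first-order transport flux it coincides with the paper's own argument: the relaxation update \eqref{eq:update_split} is a convex combination of $f_k^{\ell}$, $\E_{kk}^{\ast}$, $\E_{kj}^{\ast}$ whose coefficients sum to one by the definition of $d_k$, each entry being $<1$ since the Fermi--Dirac targets satisfy $0<\F<1$; and the first-order transport update is a convex combination of neighbouring cell values under the CFL condition. Where you genuinely diverge is the second-order limited flux. The paper handles it by explicit limiter bookkeeping: setting $\sigma=\operatorname{sign}(f_{k,i}^\ast-f_{k,i-1}^\ast)$, it bounds $\phi_{i+\frac{1}{2}}$ and $-\phi_{i-\frac{1}{2}}$ by one-sided differences and dominates the update by a combination with coefficients $1-\tfrac{3}{2}\tfrac{\Delta t}{m_k\Delta x}|p^1|$, $\tfrac{\Delta t}{m_k\Delta x}|p^1|$, $\tfrac{\Delta t}{m_k\Delta x}\tfrac{|p^1|}{2}$, which is exactly where the $\beta=\tfrac{2}{3}$ factor you anticipated enters. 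You instead reduce the upper bound to positivity via the substitution $g\mapsto 1-g$. That reduction is valid, but not for the reason you give: the second-order flux is \emph{not} linear, since the minmod limiter is nonlinear. What actually saves the argument is that $\operatorname{minmod}$ is odd and vanishes on constant data, so $\phi_{i+\frac{1}{2}}(1-g)=-\phi_{i+\frac{1}{2}}(g)$ and hence $\mathscr{F}_{i+\frac{1}{2}}(1-g)=\tfrac{p^1}{m_k}-\mathscr{F}_{i+\frac{1}{2}}(g)$; the constant $\tfrac{p^1}{m_k}$ cancels in the flux difference, so transporting $1-f_k^{\ast}$ produces exactly $1-f_k^{\ell+1}$, and the convex-combination structure underlying Proposition \ref{prop:positivity_1st_order} (which preserves \emph{strict} positivity, giving the strict inequality $f_k^{\ell+1}<1$ rather than merely $\leq 1$) finishes the proof. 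Once this equivariance is recorded, your route is arguably cleaner, as it recycles the positivity result instead of repeating the limiter case analysis; the paper's direct computation, on the other hand, is self-contained and makes the CFL constant visible.
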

\begin{proof}
Let $f_k^\ell < 1$. The local equilibrium of a fermion is a Fermi-Dirac distribution function $\F$ for which $0<\F<1$ by definition. Hence, for the relaxation step it holds 
\begin{align} \label{eq:relax-smaller-1}
f_k^{*} = d_k f_k^\ell + d_k \Delta t\,(\tilde{\nu}_{kk}\F_{kk}^{*} + \tilde{\nu}_{kj} \F_{kj}^{*}) < d_k + d_k \Delta t (\tilde{\nu}_{kk} + \tilde{\nu}_{kj}) = 1.
\end{align}
Here, we used the definition of $d_k$ given by \eqref{di}. For the transport step \eqref{eq:transport_x}, the first-order fluxes lead with \eqref{eq:relax-smaller-1} to
\begin{align*}
    f_{k,i}^{\ell+1} &= ( 1-\frac{\Delta t}{m_k\Delta x}|p^1|) f_{k,i}^{\ast} + \frac{\Delta t}{m_k\Delta x}|p^1| f_{k,i-\operatorname{sign}(p^1)}^{\ast}< ( 1-\frac{\Delta t}{m_k\Delta x}|p^1|) + \frac{\Delta t}{m_k\Delta x}|p^1| =1
\end{align*}
For the second-order fluxes, define $\sigma:=\operatorname{sign}(f_{k,i}^\ast-f_{k,i-1}^\ast)$. We conclude that 
\begin{align*}
    \phi_{i+\frac{1}{2}}(f_k^\ast) \leq 
    \begin{cases}
        0 \quad &\text{if} \quad \sigma=-1 \\
        f_{k,i+1}^\ast-f_{k,i}^\ast \quad &\text{if} \quad \sigma=+1
    \end{cases},\\
    -\phi_{i-\frac{1}{2}}(f_k^\ast) \leq 
    \begin{cases}
        f_{k,i-1}^\ast-f_{k,i}^\ast \quad &\text{if} \quad \sigma=-1\\
        0 \quad &\text{if} \quad \sigma=+1 
    \end{cases}.
\end{align*}
With \eqref{eq:transport_x}, it follows that
\begin{equation*}
\begin{aligned}
   & f_{k,i}^{\ell+1} 
    = ( 1-\frac{\Delta t}{m_k\Delta x}|p^1|) f_{k,i}^{\ast} + \frac{\Delta t}{m_k\Delta x}|p^1| f_{k,i-\operatorname{sign}(p^1)}^{\ast} + \frac{\Delta t}{m_k\Delta x} \frac{|p^1|}{2} (\phi_{i+\frac{1}{2}}(f_k^\ast)-\phi_{i-\frac{1}{2}}(f_k^\ast)) \\
    &\leq ( 1-\frac{\Delta t}{m_k\Delta x}|p^1|) f_{k,i}^{\ast} + \frac{\Delta t}{m_k\Delta x}|p^1| f_{k,i-\operatorname{sign}(p^1)}^{\ast} + \frac{\Delta t}{m_k\Delta x} \frac{|p^1|}{2}
    \begin{cases}
        (f_{k,i-1}^\ast-f_{k,i}^\ast) ~ \text{if} ~ \sigma=-1 \\
        (f_{k,i+1}^\ast-f_{k,i}^\ast) ~ \text{if} ~ \sigma=+1
    \end{cases} \\
    &= ( 1-\frac{3}{2}\frac{\Delta t}{m_k\Delta x}|p^1|) f_{k,i}^{\ast} + \frac{\Delta t}{m_k\Delta x}|p^1| f_{k,i-\operatorname{sign}(p^1)}^{\ast} + \frac{\Delta t}{m_k\Delta x} \frac{|p^1|}{2}
    \begin{cases}
        f_{k,i-1}^\ast \; &\text{if} \quad \sigma=-1 \\
        f_{k,i+1}^\ast \; &\text{if} \quad \sigma=+1
    \end{cases} \\
    &\overset{\eqref{eq:relax-smaller-1}}< 1.
\end{aligned}
\end{equation*}
\end{proof}

\begin{proposition} \label{prop:fermion-smaller-1-ARS}
If $f_k$ represents the distribution function of a fermion with $f_k^\ell < 1$, the time discretization in Section \ref{subsec:secondorderIMEX} leads to $f_k^{\ell+1}<1$ for the space homogeneous case.
\end{proposition}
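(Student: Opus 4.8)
The plan is to follow the first-order argument of Proposition~\ref{prop:fermion-smaller-1} but to push the bound $\F<1$ through both implicit stages of the scheme of Section~\ref{subsec:secondorderIMEX}. In the space-homogeneous case the transport operator $\mathcal{T}_k$ vanishes, so by \eqref{eq:update_ARS} the two stages collapse to
\begin{align*}
f_k^{(1)} &= d_k f_k^\ell + d_k\gamma\Delta t\,(\tilde{\nu}_{kk}\F_{kk}^{(1)} + \tilde{\nu}_{kj}\F_{kj}^{(1)}), \\
f_k^{(2)} &= d_k\Big(f_k^\ell + (1-\gamma)\Delta t\,\mathcal{R}_k(f_k^{(1)},f_j^{(1)})\Big) + d_k\gamma\Delta t\,(\tilde{\nu}_{kk}\F_{kk}^{(2)} + \tilde{\nu}_{kj}\F_{kj}^{(2)}),
\end{align*}
where each target $\F_{kk}^{(s)},\F_{kj}^{(s)}$ is a Fermi-Dirac distribution and thus satisfies $0<\F<1$. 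The first stage is immediate: replacing every $\F_{\cdot}^{(1)}$ by $1$ and using \eqref{di} gives $f_k^{(1)} < d_k(1+\gamma\Delta t(\tilde{\nu}_{kk}+\tilde{\nu}_{kj})) = 1$, exactly as in \eqref{eq:relax-smaller-1}.

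First I would eliminate $f_k^{(1)}$ from the second stage. Writing $\mathcal{R}_k(f_k^{(1)},f_j^{(1)}) = \tilde{\nu}_{kk}(\F_{kk}^{(1)}-f_k^{(1)}) + \tilde{\nu}_{kj}(\F_{kj}^{(1)}-f_k^{(1)})$ and inserting the first-stage formula for $f_k^{(1)}$, the combined relaxation term reduces (using $d_k\gamma\Delta t(\tilde{\nu}_{kk}+\tilde{\nu}_{kj}) = 1-d_k$, immediate from \eqref{di}) to $d_k\big[\tilde{\nu}_{kk}\F_{kk}^{(1)}+\tilde{\nu}_{kj}\F_{kj}^{(1)} - (\tilde{\nu}_{kk}+\tilde{\nu}_{kj})f_k^\ell\big]$. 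Substituting back expresses $f_k^{(2)}$ as a single linear combination of $f_k^\ell$ and the four targets $\F_{kk}^{(1)},\F_{kj}^{(1)},\F_{kk}^{(2)},\F_{kj}^{(2)}$, in which the target coefficients are the positive numbers $d_k^2(1-\gamma)\Delta t\,\tilde{\nu}$ and $d_k\gamma\Delta t\,\tilde{\nu}$, while the coefficient of $f_k^\ell$ collapses to $C = d_k - d_k^2(1-\gamma)\Delta t\,(\tilde{\nu}_{kk}+\tilde{\nu}_{kj})$.

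The computational core is then the identity $C + d_k^2(1-\gamma)\Delta t\,(\tilde{\nu}_{kk}+\tilde{\nu}_{kj}) + d_k\gamma\Delta t\,(\tilde{\nu}_{kk}+\tilde{\nu}_{kj}) = 1$, which follows at once from the form of $C$ together with $d_k\gamma\Delta t(\tilde{\nu}_{kk}+\tilde{\nu}_{kj}) = 1-d_k$. Subtracting the linear combination for $f_k^{(2)}$ from this identity telescopes everything into
\begin{align*}
1 - f_k^{(2)} &= C\,(1-f_k^\ell) + d_k^2(1-\gamma)\Delta t\big[\tilde{\nu}_{kk}(1-\F_{kk}^{(1)}) + \tilde{\nu}_{kj}(1-\F_{kj}^{(1)})\big] \\
&\quad + d_k\gamma\Delta t\big[\tilde{\nu}_{kk}(1-\F_{kk}^{(2)}) + \tilde{\nu}_{kj}(1-\F_{kj}^{(2)})\big].
\end{align*}
Since each $\F<1$, the two bracketed terms are strictly positive, and $1-f_k^\ell>0$ by hypothesis, so it only remains to secure $C\ge 0$.

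The hard part is exactly this sign: because $\gamma = 1-\frac{\sqrt{2}}{2}$ gives $2\gamma-1<0$, the coefficient $C$ is not automatically nonnegative and the update fails to be a convex combination of quantities bounded by $1$ — this is why the first-order proof does not transfer verbatim. I would resolve it by noting that $C\ge 0$ is equivalent to $d_k\ge (1-2\gamma)/(1-\gamma)$, which, after unwinding the definition \eqref{di} of $d_k$, is precisely the time-step restriction \eqref{eq:timerestriction-IMEX} already in force for positivity in Proposition~\ref{prop:positivity_2nd_order}. Under that restriction all three terms above are nonnegative and the last two strictly positive, whence $1-f_k^{(2)}>0$ and $f_k^{\ell+1}=f_k^{(2)}<1$. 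I expect the only genuinely delicate steps to be the bookkeeping that produces the closed form of $C$ and the verification that $C\ge 0$ coincides with \eqref{eq:timerestriction-IMEX}; both are elementary once $d_k\gamma\Delta t(\tilde{\nu}_{kk}+\tilde{\nu}_{kj})=1-d_k$ is exploited.
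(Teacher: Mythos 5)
Your proposal is correct and follows essentially the same route as the paper's proof: in the space-homogeneous case, eliminate $f_k^{(1)}$ from the second stage using $d_k\gamma\Delta t(\tilde{\nu}_{kk}+\tilde{\nu}_{kj})=1-d_k$, write $f_k^{\ell+1}=f_k^{(2)}$ as a linear combination of $f_k^\ell$ and the four Fermi--Dirac targets whose coefficients sum to one, and bound each target by $1$. The genuinely valuable point is your treatment of the coefficient $C=d_k-d_k^2(1-\gamma)\Delta t(\tilde{\nu}_{kk}+\tilde{\nu}_{kj})$ of $f_k^\ell$. The paper's proof passes from $f_k^\ell\,(1-2\Delta t(1-\gamma)d_k)$ to the bound $1-2\Delta t(1-\gamma)d_k$ without comment (the ``$2$'' there is evidently shorthand for $\tilde{\nu}_{kk}+\tilde{\nu}_{kj}$), a step that is valid only when this coefficient is nonnegative --- exactly the sign condition you isolate. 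Since $\gamma=1-\tfrac{\sqrt{2}}{2}$ gives $1-2\gamma>0$, this is a real restriction: as you verify, $C\geq 0$ is equivalent to the time-step condition \eqref{eq:timerestriction-IMEX} of Proposition~\ref{prop:positivity_2nd_order}, which the statement of Proposition~\ref{prop:fermion-smaller-1-ARS} does not mention. So your argument proves the proposition under the explicit hypothesis $\Delta t\leq \frac{1}{(1-2\gamma)(\tilde{\nu}_{kk}+\tilde{\nu}_{kj})}$ (which is anyway in force whenever one insists on positivity), whereas the paper's proof silently assumes the same condition; your version makes the hidden assumption explicit and is the more rigorous of the two.
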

\begin{proof}
Let $f_k^\ell < 1$. The local equilibrium of a fermion is a Fermi-Dirac distribution function $\F$ for which $0<\F<1$ by definition. Hence,
{\small
\begin{align*}
    \begin{split}
        &f_k^{\ell+1} =  d_k ( f_k^\ell + \Delta t (1-\gamma) (\tilde{\nu}_{kk}\F_{kk}^{(1)}+\tilde{\nu}_{kj}\F_{kj}^{(1)}-(\tilde{\nu}_{kk}+\tilde{\nu}_{kj})f_k^{(1)}) ) + \gamma \Delta t d_k( \tilde{\nu}_{kk} \F_{kk}^{(2)} + \tilde{\nu}_{kj} \F_{kj}^{(2)} )\\
        &= d_k ( f_k^\ell (1-2\Delta t (1-\gamma)d_k) + \Delta t (1-\gamma)d_k (\tilde{\nu}_{kk}\F_{kk}^{(1)}+\tilde{\nu}_{kj}\F_{kj}^{(1)}) )  + \gamma \Delta t d_k( \tilde{\nu}_{kk} \F_{kk}^{(2)} + \tilde{\nu}_{kj} \F_{kj}^{(2)} )\\
        &< d_k \left[ 1-2\Delta t (1-\gamma)d_k + 2\Delta t (1-\gamma)d_k +  \gamma \Delta t (\tilde{\nu}_{kk}+\tilde{\nu}_{kj})\right] = 1.
    \end{split}
\end{align*}}
\end{proof}

\subsubsection{Conservation of mass, total momentum and total energy} \label{subsec:discreteconservation}

In this section, we concern the conservation of mass, total momentum, and total energy for the semi-discrete scheme. The proofs of the following propositions work analogously as and can be found in the proofs of Proposition 5.3 and 5.4 in \cite{velocity-dependent}.

\begin{proposition} \label{theo:conservation_relaxation}
The relaxation step in the first-order splitting scheme presented in Section \ref{subsec:firstordersplit} satisfies the conservation laws
\begin{gather*}
\int m_1 f_1^{\ast} dp = \int m_1 f_1^{\ell} dp , \quad \int  m_2 f_2^{\ast} dp = \int  m_2 f_2^{\ell} dp, \\
\int  \left( m_1 p f_1^{\ast} +  m_2 p f_2^{\ast} \right) dp
= \int \left(  m_1 p f_1^{\ell} + m_2 p f_2^{\ell}   \right) dp,\\
\int  \left(  \frac{|p|^2}{2m_1} f_1^{\ast} +  \frac{|p|^2}{2m_2}  f_2^{\ast} \right) dp
= \int  \left(  \frac{|p|^2}{2m_1}  f_1^{\ell} + \frac{|p|^2}{2m_2}  f_2^{\ell}   \right) dp.
\end{gather*}
\end{proposition}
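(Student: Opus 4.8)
The plan is to test the semi-discrete relaxation step against the collision invariants assembled in $\mbp_k=(1,p,\frac{|p|^2}{2m_k})^\top$ and to use that the discrete targets $\E_{kk}^{\ast}$ and $\E_{kj}^{\ast}$ reproduce the moments of $f_k^{\ast}$ by construction. Indeed, the moment equations behind the potential functions \eqref{eq:potentialfunction_single}--\eqref{eq:potentialfunction_mixed} in Section \ref{sec:generaltime} (equivalently, the dual problems \eqref{eq:dual-intra}--\eqref{eq:dual-inter}) are exactly the discrete analogues of the constraints \eqref{conserv 1} and \eqref{conserv 2} evaluated at $f_k^{\ast}$; so $\int \mbp_k(\E_{kk}^{\ast}-f_k^{\ast})\,dp=0$ together with the inter-species constraints hold exactly at the semi-discrete, momentum-continuous level. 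I would work from the differential form \eqref{eq:relax} rather than the convex-combination form \eqref{eq:update_split}, since it makes the cancellations most transparent.

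For the per-species mass I would integrate \eqref{eq:relax} in $p$. The right-hand side becomes $\tilde{\nu}_{kk}\big(\int\E_{kk}^{\ast}\,dp-\int f_k^{\ast}\,dp\big)+\tilde{\nu}_{kj}\big(\int\E_{kj}^{\ast}\,dp-\int f_k^{\ast}\,dp\big)$; the first bracket vanishes by the density constraint in \eqref{conserv 1} and the second by the density constraint in \eqref{conserv 2}. Hence $\int f_k^{\ast}\,dp=\int f_k^{\ell}\,dp$, and multiplication by the constant $m_k$ gives the stated mass law for each species.

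For the total momentum and total energy I would test the species-$1$ equation against the second and third components of $\mbp_1$, namely $p$ and $\frac{|p|^2}{2m_1}$, and the species-$2$ equation against the corresponding components of $\mbp_2$, then integrate and add the two equations. The intra-species contributions drop out individually because $\E_{kk}^{\ast}$ shares the momentum and energy of $f_k^{\ast}$ via \eqref{conserv 1}. What remains is exactly the inter-species exchange sum $\tilde{\nu}_{12}\big(\int\E_{12}^{\ast}\,p\,dp-P_1^{\ast}\big)+\tilde{\nu}_{21}\big(\int\E_{21}^{\ast}\,p\,dp-P_2^{\ast}\big)$ with $\tilde{\nu}_{12}=\nu_{12}n_2$, $\tilde{\nu}_{21}=\nu_{21}n_1$ for the momentum, and the analogous combination of energy moments; each is the left-hand side of the corresponding constraint in \eqref{conserv 2} and therefore vanishes under the standing assumption $\nu_{12}n_2=\nu_{21}n_1$. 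Summing then gives conservation of $P_1^{\ast}+P_2^{\ast}$ and of $E_1^{\ast}+E_2^{\ast}$.

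The only genuine content, and hence the step to state carefully, is the claim that $\E_{kk}^{\ast}$ and $\E_{kj}^{\ast}$ satisfy the discrete constraints \eqref{conserv 1}--\eqref{conserv 2} at the implicit time level. This is not an additional hypothesis but precisely how the targets are defined in Section \ref{sec:generaltime}; once it is invoked, everything reduces to a direct integration and summation, identical in structure to the proof of Proposition 5.3 in \cite{velocity-dependent}. I do not anticipate any obstacle beyond carefully bookkeeping the collision frequencies and the species-dependent energy weight.
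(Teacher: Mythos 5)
Your instinct to test the relaxation step against the collision invariants is right, and the proposition itself is true, but the central premise of your argument is not what the scheme of Section \ref{sec:generaltime} actually provides. You claim that the optimality conditions behind \eqref{eq:potentialfunction_single}--\eqref{eq:potentialfunction_mixed} are the constraints \eqref{conserv 1}--\eqref{conserv 2} \emph{evaluated at} $f_k^{\ast}$, so that in particular $\int \mbp_k(\E_{kk}^{\ast}-f_k^{\ast})\,dp=0$. They are not: the minimization uses the inputs \eqref{eq:input_mu_i}--\eqref{eq:input_mu}, which are moments of the \emph{known data} $G_k=f_k^{\ell}$, and its first-order conditions read
\begin{equation*}
\int \E_{kk}^{\ast}\,\mbp_k\,dp=\int f_k^{\ell}\,\mbp_k\,dp,\qquad
\int \E_{kj}^{\ast}\,dp=\int f_k^{\ell}\,dp,
\end{equation*}
together with the $d_k\tilde{\nu}_{kj}$-weighted \emph{combined} momentum/energy identity
\begin{equation*}
d_1\tilde{\nu}_{12}\!\int \E_{12}^{\ast}\begin{pmatrix}p\\ \tfrac{|p|^2}{2m_1}\end{pmatrix}dp
+d_2\tilde{\nu}_{21}\!\int \E_{21}^{\ast}\begin{pmatrix}p\\ \tfrac{|p|^2}{2m_2}\end{pmatrix}dp
=d_1\tilde{\nu}_{12}\!\int f_1^{\ell}\begin{pmatrix}p\\ \tfrac{|p|^2}{2m_1}\end{pmatrix}dp
+d_2\tilde{\nu}_{21}\!\int f_2^{\ell}\begin{pmatrix}p\\ \tfrac{|p|^2}{2m_2}\end{pmatrix}dp.
\end{equation*}
Combining these with \eqref{eq:update_split} gives
$\int p\,(f_1^{\ast}-\E_{11}^{\ast})\,dp = d_1\Delta t\,\tilde{\nu}_{12}\bigl(\int p\,\E_{12}^{\ast}\,dp-\int p\,f_1^{\ell}\,dp\bigr)$,
which is nonzero whenever the species have different mean velocities. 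So the intra-species momentum and energy terms in your argument do \emph{not} drop out individually, and the inter-species pair does not cancel on its own either: its value is $\Delta t\,d_1\tilde{\nu}_{12}(\tilde{\nu}_{11}-\tilde{\nu}_{22})\bigl(\int p\,\E_{12}^{\ast}dp-\int p f_1^{\ell}dp\bigr)$, nonzero unless $\tilde{\nu}_{11}=\tilde{\nu}_{22}$; only the sum of all four contributions vanishes. For the mass law your premise is true but circular, since $\int(\E_{kk}^{\ast}-f_k^{\ast})\,dp=0$ is equivalent to the mass conservation being proved, not a property one may quote ``by construction.''

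The proof the paper relies on (Propositions 5.3--5.4 of \cite{velocity-dependent}) never refers to moments of $f^{\ast}$ at all: take moments of the convex-combination form \eqref{eq:update_split}, insert the $f^{\ell}$-based optimality conditions above, and use $d_k\bigl(1+\Delta t(\tilde{\nu}_{kk}+\tilde{\nu}_{kj})\bigr)=1$; species mass, total momentum and total energy then follow in two lines, and, notably, without ever invoking the assumption $\nu_{12}n_2=\nu_{21}n_1$ (the needed weighting is built into the constraint). In fairness, your reading is invited by the paper's own wording (``the unique target functions associated to $\psi_k$'') and by the identical step asserted in the entropy proof of Proposition \ref{theo:entropy_inequality_relaxation}; but as a proof of this proposition for the scheme as defined, the argument must run through the $G_k$-based optimality conditions, otherwise it attributes to $\E_{kk}^{\ast}$, $\E_{kj}^{\ast}$ conservation properties relative to $f_k^{\ast}$ that the minimization problem does not deliver.
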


\begin{proposition} \label{theo:conservation_transport}
For each $i=1,2$, the transport step in the first-order splitting scheme in Section \ref{subsec:firstordersplit}, combined with the space discretization presented in Section \ref{sec:space} satisfies the conservation laws
\begin{align*}
    \sum_{i=0}^I \int \mbp_k f^{\ell+1}_{k,i} dp \Delta x = \sum_{i=0}^I \int \mbp_k f^{\ast}_{k,i} dp \Delta x
\end{align*}
for periodic or zero boundary conditions. 
\end{proposition}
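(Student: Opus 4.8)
The plan is to exploit the conservative (flux-difference) form of the finite-volume transport update and reduce the claim to a telescoping sum in the spatial index, with the surviving boundary terms controlled by the boundary conditions. This is the discrete analogue of the fact that transport merely moves mass, momentum and energy between cells without creating or destroying them, and it is carried out separately for each species $k=1,2$ because each $f_k$ obeys its own transport operator $\mathcal{T}_k$.

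First I would write the transport step \eqref{eq:transport_x} with the space discretization of Section \ref{sec:space} explicitly at the cell level,
\begin{align*}
f_{k,i}^{\ell+1} = f_{k,i}^{\ast} - \frac{\Delta t}{\Delta x}\left(\mathscr{F}_{i+\frac{1}{2}}(f_k^{\ast}) - \mathscr{F}_{i-\frac{1}{2}}(f_k^{\ast})\right),
\end{align*}
and record that, for both the first- and the second-order fluxes of Section \ref{sec:space}, the interface flux $\mathscr{F}_{i+\frac{1}{2}}$ is a single-valued function of $p$ shared by the two adjacent cells $I_i$ and $I_{i+1}$; in particular the limiter $\phi_{i+\frac{1}{2}}$ is itself single-valued at each interface. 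This conservative structure is the only property of the flux that the argument uses.

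Next I would multiply the update by the collision invariant $\mbp_k(p)=(1,p,\frac{|p|^2}{2m_k})^\top$, integrate in $p$, multiply by $\Delta x$, and sum over the cells $i=0,\dots,I$. The contribution of the first term on the right reproduces exactly the right-hand side of the claimed identity, while the flux-difference contribution becomes
\begin{align*}
-\Delta t \sum_{i=0}^I \int \mbp_k\left(\mathscr{F}_{i+\frac{1}{2}}(f_k^{\ast}) - \mathscr{F}_{i-\frac{1}{2}}(f_k^{\ast})\right)dp
= -\Delta t \int \mbp_k\left(\mathscr{F}_{I+\frac{1}{2}}(f_k^{\ast}) - \mathscr{F}_{-\frac{1}{2}}(f_k^{\ast})\right)dp,
\end{align*}
where the sum telescopes precisely because every interior interface flux is single-valued and hence cancels. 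It then remains to show that the surviving boundary contribution vanishes: for periodic boundary conditions the left and right boundary fluxes coincide, $\mathscr{F}_{I+\frac{1}{2}} = \mathscr{F}_{-\frac{1}{2}}$, so their difference is zero; for zero boundary conditions the ghost values entering $\mathscr{F}_{-\frac{1}{2}}$ and $\mathscr{F}_{I+\frac{1}{2}}$ vanish, so both boundary fluxes are zero. In either case the flux-difference term drops out and the stated identity follows.

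I do not expect a genuine obstacle here, since this is the standard telescoping / discrete-divergence argument for conservative finite-volume schemes. The only points requiring care are verifying that the second-order flux (including the minmod limiter) is genuinely written in single-valued conservative form so that the interior fluxes cancel, and checking the boundary-flux cancellation separately in the periodic and zero-boundary cases. The momentum integration against $\mbp_k$ plays no role in the cancellation and may be performed either before or after the telescoping, which cleanly decouples the spatial conservation from the choice of velocity grid.
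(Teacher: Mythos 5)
Your proposal is correct and is essentially the argument the paper relies on: the paper does not write out this proof but defers to Propositions 5.3 and 5.4 of \cite{velocity-dependent}, and the proof there is exactly this conservative flux-difference/telescoping argument with boundary terms killed by the boundary conditions. One small imprecision: for zero boundary conditions, vanishing ghost values alone do \emph{not} make $\mathscr{F}_{-\frac{1}{2}}$ and $\mathscr{F}_{I+\frac{1}{2}}$ zero, since each boundary flux also involves the adjacent interior values $f^{\ast}_{k,0}$, $f^{\ast}_{k,I}$ (the outflow part $\frac{p^1-|p^1|}{2m_k}f^{\ast}_{k,0}$ survives); the intended reading is that the distribution functions themselves vanish in a neighborhood of the boundary, which makes both boundary fluxes zero and completes your argument.
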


Since the second-order time-stepping scheme in Section \ref{subsec:secondorderIMEX} can be broken into relaxation and transport parts, each of which preserves the conservation of mass, total momentum, and total energy, we can state the following:
\begin{cor}
For periodic or zero boundary conditions, any combination of temporal and space discretization presented to Sections \ref{sec:time} and \ref{sec:space}, respectively, conserves mass, total momentum and total energy. 
\end{cor}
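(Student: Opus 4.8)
The corollary asserts that any combination of the first- or second-order time discretization with the first- or second-order space discretization conserves mass, total momentum, and total energy under periodic or zero boundary conditions. The plan is to reduce this claim entirely to the two preceding propositions by invoking the operator-splitting structure of both time schemes, so that no new estimate is required. The key observation is that every update in Sections~\ref{subsec:firstordersplit} and~\ref{subsec:secondorderIMEX} is built from exactly two elementary building blocks: a relaxation substep of the backward-Euler form~\eqref{eq:relax} and a transport substep of the forward-Euler form~\eqref{eq:transport_x}. Proposition~\ref{theo:conservation_relaxation} certifies that the relaxation building block conserves the collision invariants against $\mbp_k$, and Proposition~\ref{theo:conservation_transport} certifies that the transport building block conserves them after summation over the spatial cells. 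Since the composition of conservative maps is conservative, the corollary follows once we exhibit each full scheme as such a composition.

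First I would treat the first-order case. The first-order scheme is literally the composition of one relaxation step~\eqref{eq:update_split} followed by one transport step~\eqref{eq:transport_x}, so I would simply chain the two propositions: apply Proposition~\ref{theo:conservation_relaxation} to pass from $f_k^\ell$ to $f_k^\ast$, then apply Proposition~\ref{theo:conservation_transport} to pass from $f_k^\ast$ to $f_k^{\ell+1}$, and conclude that the sum $\sum_i \int \mbp_k f_{k,i}^{\ell+1}\,dp\,\Delta x$ (summed over species for the total momentum and energy equalities) equals its value at $t_\ell$. Next I would handle the second-order IMEX Runge--Kutta scheme~\eqref{eq:update_ARS}. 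The crucial point, already stated in the paragraph preceding the corollary, is that this scheme ``can be broken into relaxation and transport parts.'' I would make this precise by noting that each stage $f_k^{(1)}, f_k^{(2)}$ is an affine combination of terms of the two admissible types: the implicit relaxation contributions $\tilde{\nu}_{kk}\E_{kk}^{(s)} + \tilde{\nu}_{kj}\E_{kj}^{(s)}$ and the explicit transport contributions $\mathcal{T}_k(f_k^{(s)})$. Because the defining constraints~\eqref{conserv 1} and~\eqref{conserv 2} make the relaxation contributions conservative against $\mbp_k$ by exactly the same moment argument used for Proposition~\ref{theo:conservation_relaxation}, and the flux-difference form of $\mathcal{T}_k$ telescopes under the cell sum for periodic or zero boundary data exactly as in Proposition~\ref{theo:conservation_transport}, taking the $\mbp_k$-moment and the spatial sum of the stage equations shows that the conserved quantities are unchanged stage-by-stage, hence from $t_\ell$ to $t_{\ell+1}$.

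I would emphasise two structural facts that make the splitting argument go through cleanly. For the inter-species terms one must remember that momentum and energy are conserved only in the \emph{total} (summed over $k=1,2$) sense, as encoded in~\eqref{conserv 2}; the individual $\E_{12}, \E_{21}$ moments are not separately equal to $P_1, P_2$ or $E_1, E_2$. Thus when verifying conservation of total momentum and total energy I would test against $\mbp_1$ and $\mbp_2$ simultaneously and add the two species' equations, so that the inter-species exchange terms cancel by~\eqref{conserv 2}; mass is conserved species-wise by the first lines of~\eqref{conserv 1} and~\eqref{conserv 2}. For the transport terms I would rely on the fact that the numerical flux $\mathscr{F}_{i+\frac12}$ is written as a single cell-interface function shared by adjacent cells, so that $\sum_i(\mathscr{F}_{i+\frac12}-\mathscr{F}_{i-\frac12})$ telescopes to the boundary fluxes, which vanish for zero boundary data and cancel in pairs for periodic data.

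The argument is essentially bookkeeping, so there is no deep obstacle; the only delicate point is confirming that the second-order scheme genuinely decomposes into the two conservative primitives without introducing a non-conservative coupling. Concretely, the potentially worrisome step is $G_k^{(2)}$, which mixes a transport term $\mathcal{T}_k(f_k^{(1)})$ with an explicit relaxation term $\mathcal{R}_k(f_k^{(1)},f_j^{(1)})$; I would check that when this is carried through the $\mbp_k$-moment and the spatial sum, the transport contribution telescopes and the relaxation contribution is annihilated by~\eqref{conserv 1}--\eqref{conserv 2} in the summed sense, so that no residual inter-species or inter-cell term survives. Once this single verification is in place, the corollary is immediate, and I would state it as a direct consequence of Propositions~\ref{theo:conservation_relaxation} and~\ref{theo:conservation_transport} together with the globally stiffly accurate, conservative Butcher structure of the chosen IMEX pair.
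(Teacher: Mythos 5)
Your proposal is correct and follows essentially the same route as the paper: the paper justifies the corollary solely by the remark that the second-order IMEX scheme ``can be broken into relaxation and transport parts,'' each conservative by Propositions \ref{theo:conservation_relaxation} and \ref{theo:conservation_transport}. Your stage-by-stage verification (in particular that the explicit relaxation term in $G_k^{(2)}$ is annihilated in the summed $\mbp_k$-moments because the stage-one targets satisfy \eqref{conserv 1}--\eqref{conserv 2} relative to $f^{(1)}$) simply makes explicit the bookkeeping the paper leaves implicit.
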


\subsubsection{Entropy inequality}

We study the entropy behavior for the first-order scheme in Section \ref{subsec:firstordersplit}. Both the relaxation and the transport step dissipate entropy; see Propositions \ref{theo:entropy_inequality_relaxation} and \ref{theo:entropy_inequality_transport}. Moreover, the minimal entropy is reached for the relaxation step if the distribution functions coincide with the corresponding target functions; see Proposition \ref{theo:entropy_equilibrium}. 

\begin{proposition}\label{theo:entropy_inequality_relaxation}
 Let $h_{\tau}$ be given by \eqref{entropy_h}. 
 The relaxation step in the first-order splitting scheme in Section \ref{subsec:firstordersplit} fulfills the discrete entropy inequality
 \begin{align*}
     \int  h_{\tau}(f_1^{\ast}) + h_{\tau'}(f_2^{\ast})dp \leq  \int h_{\tau}(f_1^{\ell}) + h_{\tau'}(f_2^{\ell}) dp.
 \end{align*}
\end{proposition}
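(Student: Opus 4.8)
The plan is to exploit the convexity of $h_\tau$ together with the convex-combination structure of the relaxation update \eqref{eq:update_split}. Recall that $f_k^{\ast}$ is written as
\begin{align*}
f_k^{\ast} = d_k f_k^{\ell} + d_k \Delta t\, \tilde{\nu}_{kk}\,\E_{kk}^{\ast} + d_k \Delta t\, \tilde{\nu}_{kj}\,\E_{kj}^{\ast},
\end{align*}
and that the three coefficients $d_k$, $d_k\Delta t\,\tilde{\nu}_{kk}$, and $d_k\Delta t\,\tilde{\nu}_{kj}$ are nonnegative and sum to $1$ by the definition of $d_k$. Since $h_\tau$ is convex on its domain, Jensen's inequality applied pointwise in $p$ yields
\begin{align*}
h_\tau(f_k^{\ast}) \leq d_k\, h_\tau(f_k^{\ell}) + d_k\Delta t\,\tilde{\nu}_{kk}\, h_\tau(\E_{kk}^{\ast}) + d_k\Delta t\,\tilde{\nu}_{kj}\, h_\tau(\E_{kj}^{\ast}).
\end{align*}
The first step is therefore to integrate this inequality in $p$, sum over both species $k=1,2$, and collect the terms according to whether they involve the kinetic distributions or the target functions.

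The second step is to control the contribution of the target functions by the corresponding kinetic distributions. Here I would use the defining moment-matching property of the equilibria together with the entropy-minimization characterization highlighted at the end of Section \ref{sec:model}: each $\E_{kj}$ minimizes the entropy functional subject to the moment constraints \eqref{conserv 1}, \eqref{conserv 2}. Concretely, since $\E_{kk}^{\ast}$ shares the moments $(n_k, P_k, E_k)$ with $f_k^{\ast}$ (and the inter-species equilibria share the combined momentum and energy moments), the Gibbs-type inequality for $h_\tau$ gives $\int h_\tau(\E_{kk}^{\ast})\,dp \leq \int h_\tau(f_k^{\ast})\,dp$, and analogously the sum $\int h_\tau(\E_{12}^{\ast}) + h_{\tau'}(\E_{21}^{\ast})\,dp$ is bounded by the corresponding sum for $f_1^{\ast}, f_2^{\ast}$. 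This is precisely the statement that the minimizer of the constrained entropy problem cannot have larger entropy than any competitor with the same moments.

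Combining these bounds, I would substitute the equilibrium entropy estimates back into the integrated Jensen inequality. Writing $S^{\ast} := \int h_\tau(f_1^{\ast}) + h_{\tau'}(f_2^{\ast})\,dp$ and $S^{\ell}$ for the analogous quantity at time level $\ell$, the summed inequality takes the schematic form $S^{\ast} \leq d \, S^{\ell} + (1-d)\, S^{\ast}$ with $d = d_k$ the convex weight on the old data; rearranging gives $S^{\ast} \leq S^{\ell}$, which is the claim. The main obstacle I anticipate is the careful bookkeeping of the inter-species terms: the targets $\E_{12}$ and $\E_{21}$ do not individually conserve momentum and energy — only the weighted sums in \eqref{conserv 2} do — so the Gibbs inequality must be applied to the \emph{combined} functional $\int h_\tau(\E_{12}) + h_{\tau'}(\E_{21})\,dp$ rather than to each species separately, and one must verify that the weights $d_k\tilde{\nu}_{kj}$ align correctly with the coupling structure of \eqref{conserv 2}. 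Once the convex weights and the shared moments are matched consistently across the two species, the rearrangement is routine.
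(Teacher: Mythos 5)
Your route is genuinely different from the paper's proof, which never compares entropy \emph{values} of the target functions with those of the distribution functions: the paper combines the tangent-line inequality for $h_{\tau_k}$ at $f_k^{\ast}$ with the fact that $h_{\tau_k}'(\E_{kk}^{\ast})$ and $h_{\tau_k}'(\E_{kj}^{\ast})$ are linear combinations of the conserved moments, so that the multiplier terms \eqref{eq:zero-intra}, \eqref{eq:zero-inter} vanish, and then concludes by the monotonicity \eqref{eq:inequality} of $h_{\tau}'$. Your Jensen-plus-Gibbs argument replaces the last two ingredients by the entropy-minimization property of the equilibria; the Gibbs inequalities you invoke are indeed available (they are Theorem \ref{thm2.2} and its intra-species analogue, applied with $f_k^{\ast}$ as competitor), and they rest on exactly the same moment-matching ``by construction of the scheme'' that the paper uses for \eqref{eq:zero-intra} and \eqref{eq:zero-inter}. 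So the foundation is sound; the problem is the bookkeeping you deferred.

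That bookkeeping is precisely where the proof as written fails. After integrating your Jensen inequality and summing over $k$, the inter-species entropies enter as
\begin{align*}
\Delta t\left[\,d_1\tilde{\nu}_{12}\int h_{\tau}(\E_{12}^{\ast})\,dp \;+\; d_2\tilde{\nu}_{21}\int h_{\tau'}(\E_{21}^{\ast})\,dp\,\right].
\end{align*}
The standing assumption $\nu_{12}n_2=\nu_{21}n_1$ makes the cross frequencies equal, $\tilde{\nu}_{12}=\tilde{\nu}_{21}$, but $d_1\neq d_2$ whenever $\tilde{\nu}_{11}\neq\tilde{\nu}_{22}$, so the two coefficients above differ in general. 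The inter-species Gibbs inequality controls only the \emph{unweighted} sum $\int h_{\tau}(\E_{12}^{\ast})+h_{\tau'}(\E_{21}^{\ast})\,dp$, because $\E_{12}^{\ast}$ and $\E_{21}^{\ast}$ are tied together by the single shared momentum and energy multipliers; since $h_{\tau}$ has no definite sign, a mismatched weighted combination cannot be estimated by it. Correspondingly, your closing step ``$S^{\ast}\leq d\,S^{\ell}+(1-d)\,S^{\ast}$'' silently assumes a single common weight $d=d_1=d_2$, i.e.\ equal total collision frequencies of the two species; only in that special case is your argument complete.

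The gap is fixable by one modification: multiply the species-$k$ Jensen inequality by $1/d_k=1+\Delta t(\tilde{\nu}_{kk}+\tilde{\nu}_{kj})$ \emph{before} summing, i.e.\ use
\begin{align*}
\bigl(1+\Delta t(\tilde{\nu}_{kk}+\tilde{\nu}_{kj})\bigr)\int h_{\tau_k}(f_k^{\ast})\,dp
\leq \int h_{\tau_k}(f_k^{\ell})\,dp
+ \Delta t\,\tilde{\nu}_{kk}\int h_{\tau_k}(\E_{kk}^{\ast})\,dp
+ \Delta t\,\tilde{\nu}_{kj}\int h_{\tau_k}(\E_{kj}^{\ast})\,dp.
\end{align*}
Summing over $k$, the inter-species terms now carry the equal weights $\Delta t\tilde{\nu}_{12}=\Delta t\tilde{\nu}_{21}$, so the combined Gibbs inequality applies; the intra-species terms are absorbed by the single-species Gibbs inequality; and after collecting terms the coefficient of $\int h_{\tau_k}(f_k^{\ast})\,dp$ is $1+\Delta t(\tilde{\nu}_{kk}+\tilde{\nu}_{kj})-\Delta t\tilde{\nu}_{kk}-\Delta t\tilde{\nu}_{kj}=1$, which yields exactly the claimed unweighted inequality. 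With this rescaling, your approach becomes a valid alternative to the paper's proof.
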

\begin{proof}
By convexity
\begin{equation*}
h_{\tau_k}(f_k^{\ell}) \geq h_{\tau_k}(f_k^{\ast}) + h_{\tau_k}'(f_k^{\ast})(f_k^{\ell}- f_k^{\ast}).
\end{equation*}
For $f\geq 0$ ($\tau \in \{-1,0\})$, respective $0\leq f < 1$ ($\tau=+1$), the derivative
\begin{align*}
    h_{\tau}'(f) = \log \frac{f}{1-\tau f}
\end{align*}
is monotonically increasing such that
\begin{align} \label{eq:inequality}
(h_{\tau}'(x)-h_{\tau}'(y))(y-x)\leq 0 
\end{align}
for all  $x,y\geq 0$ ($\tau \in \{-1,0\})$ and  $0\leq x,y < 1$ ($\tau=+1$), respectively. Moreover, since
\begin{align*}
    h_{\tau_k}'(\E_{kk}^{\ast}) =  \alpha_k \cdot \mbp_k,
\end{align*}
it holds
\begin{align} \label{eq:zero-intra}
\int  h_{\tau_k}'(\E_{kk}^{\ast})\tilde{\nu}_{kk} (\E_{kk}^{\ast} -f_k^{\ast}) dp 
    =\int \alpha_k \cdot  \mbp_k \,\tilde{\nu}_{kk} (\E_{kk}^{\ast} -f_k^{\ast}) dp
    =0 
\end{align}
which vanishes as the conservation properties are satisfied at the semi-discrete level as well by
construction of the scheme.
Analogously for the inter-species terms,
\begin{equation} \label{eq:zero-inter}
\begin{aligned}
    \int &h_{\tau}'(\E_{12}^{\ast})\tilde{\nu}_{12}(\E_{12}^{\ast} -f_1^{\ast}) dp + \int  h_{\tau'}'(\E_{21}^{\ast})\tilde{\nu}_{21} (\E_{21}^{\ast} -f_2^{\ast}) dp\\
    &=\alpha_{12}^0 \int  \tilde{\nu}_{12} (\E_{12}^{\ast} -f_1^{\ast}) dp + \alpha_{21}^0 \int  \tilde{\nu}_{21}  (\E_{21}^{\ast} -f_2^{\ast}) dp \\ &\quad + \begin{pmatrix} \alpha^1 \\ \alpha^2 \end{pmatrix} \cdot \int \textcolor{black}{\left[  \tilde{\nu}_{12} (\E_{12}^{\ast} -f_1^{\ast})\begin{pmatrix} p \\ \frac{|p|^2}{2m_1} \end{pmatrix} +   \tilde{\nu}_{21}(\E_{21}^{\ast} -f_2^{\ast})  \begin{pmatrix} p \\ \frac{|p|^2}{2m_2} \end{pmatrix} \right]}dp  \\
    &=0.
\end{aligned}
\end{equation}
The implicit step \eqref{eq:update_split} is 
\begin{equation}
\label{eq:implicit_step_restated}
f_k^{\ast} - f_k^{\ell} = \Delta t\, \tilde{\nu}_{kk}(\E_{kk}^{\ast} -f_k^{\ast}) + \Delta t\, \tilde{\nu}_{kj} (\E_{kj}^{\ast} -f_k^{\ast}).
\end{equation}
 Using \eqref{eq:implicit_step_restated} and the convexity of $h_{\tau}$ leads to
\begin{equation} \label{eq:h_convex}
\begin{aligned}
h_{\tau_k}(f_k^{\ast}) - h(f_k^{\ell}) &\leq \textcolor{black}{h'_{\tau_k}}(f_k^{\ast})(f_k^{\ast}- f_k^{\ell}) \\
    &\overset{\eqref{eq:implicit_step_restated}}{=}  \Delta t \, h_{\tau_k}'(f_k^{\ast})\tilde{\nu}_{kk}(\E_{kk}^{\ast} -f_k^{\ast}) + \Delta t \, \textcolor{black}{h_{\tau_k}'}(f_k^{\ast})\tilde{\nu}_{kj}(\E_{kj}^{\ast} -f_k^{\ast}).
\end{aligned}
\end{equation}
Thus after integrating \eqref{eq:h_convex} with respect to $p$ and making use of \eqref{eq:zero-intra} and \eqref{eq:zero-inter}, we obtain
\begin{equation*} 
\begin{aligned}
&\int  h_{\tau}(f_1^{\ast}) dp - \int h_{\tau}(f_1^{\ell}) dp + \int h_{\tau'}(f_2^{\ast}) dp - \int h_{\tau'}(f_2^{\ell}) dp \\
    &\leq \Delta t  \int (h_{\tau}'(f_1^{\ast})-h_{\tau}'(\E_{11}^{\ast}))\tilde{\nu}_{11}(\E_{11}^{\ast} -f_1^{\ast}) dp + \Delta t  \int (h_{\tau'}'(f_2^{\ast})-h_{\tau'}'(\E_{22}^{\ast}))\tilde{\nu}_{22}(\E_{22}^{\ast} -f_2^{\ast}) dp \\
    &\, + \Delta t  \int (h_{\tau}'(f_1^{\ast})-h_{\tau}'(\E_{12}^{\ast}))\tilde{\nu}_{12}(\E_{12}^{\ast} -f_1^{\ast}) dp + \Delta t  \int (h_{\tau'}'(f_2^{\ast})-h_{\tau'}'(\E_{21}^{\ast}))\tilde{\nu}_{21}(\E_{21}^{\ast} -f_2^{\ast}) dp \\
    &\leq 0.
\end{aligned}
\end{equation*}
The last inequality comes by \eqref{eq:inequality}.  
\end{proof}

\begin{proposition} \label{theo:entropy_equilibrium}
 The inequality in Proposition \ref{theo:entropy_inequality_relaxation} is an equality if and only if $f_1^{\ell} = \E_{12}^{\ell}$ and  $f_2^{\ell} = \E_{21}^{\ell}$.  In such cases $f_1^{\ast} = \E_{12}^{\ast}$ and  $f_2^{\ast} = \E_{21}^{\ast}$.
\end{proposition}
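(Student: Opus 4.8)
The plan is to establish the two directions of the equivalence separately, in both cases exploiting that the bound in Proposition \ref{theo:entropy_inequality_relaxation} was obtained as a chain of two inequalities, so that equality forces each link to become tight. The only analytic ingredient beyond that proof is the \emph{strict} convexity of $h_\tau$: from $h_\tau'(z) = \log\frac{z}{1-\tau z}$ one computes $h_\tau''(z) = \frac{1}{z} + \frac{\tau}{1-\tau z}$, which is positive on the admissible domain in each of the three cases $\tau = 0,+1,-1$. Hence $h_\tau'$ is strictly increasing and the pointwise inequality \eqref{eq:inequality} is strict whenever its two arguments differ.

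For the forward direction, I would start from the closing estimate of Proposition \ref{theo:entropy_inequality_relaxation}, where the entropy defect is bounded above by $\Delta t$ times a sum of four integrals of the form $\int (h_\tau'(f_k^\ast) - h_\tau'(\E_{kj}^\ast))\,\tilde{\nu}_{kj}(\E_{kj}^\ast - f_k^\ast)\,dp$, each nonpositive by \eqref{eq:inequality}. If the entropy defect vanishes, then (since $\Delta t>0$) this sum must vanish; as each summand is nonpositive, every integrand vanishes almost everywhere, and strict monotonicity of $h_\tau'$ forces $f_k^\ast = \E_{kk}^\ast = \E_{kj}^\ast$ pointwise for $k=1,2$. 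Substituting $\tilde{\nu}_{kk}\E_{kk}^\ast + \tilde{\nu}_{kj}\E_{kj}^\ast = (\tilde{\nu}_{kk}+\tilde{\nu}_{kj}) f_k^\ast$ into the implicit update \eqref{eq:update_split} and using the definition \eqref{di} of $d_k$ collapses the relaxation step to $f_k^\ast = f_k^\ell$. Finally, because $\E_{kj}$ is determined solely by the conserved moments $(n_k,P_k,E_k)$, which are unchanged in the relaxation step by Proposition \ref{theo:conservation_relaxation}, the identity $f_k^\ast = f_k^\ell$ transfers to $\E_{kj}^\ast = \E_{kj}^\ell$, whence $f_1^\ell = \E_{12}^\ell$ and $f_2^\ell = \E_{21}^\ell$.

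For the converse, suppose $f_1^\ell = \E_{12}^\ell$ and $f_2^\ell = \E_{21}^\ell$. Since $f_k^\ell$ is then itself an equilibrium of the family \eqref{parameters}, and $\E_{kk}^\ell$ is the unique equilibrium of that family sharing the moments $(n_k,P_k,E_k)$ of $f_k^\ell$ (Theorem 2.1 in \cite{Quantum}, Appendix \ref{sec2}), uniqueness gives $\E_{kk}^\ell = f_k^\ell$ as well. I would then check that the stationary profile $f_k^\ast = f_k^\ell$ solves the relaxation step: as the moments are preserved, $\E_{kk}^\ast = \E_{kk}^\ell = f_k^\ell$ and $\E_{kj}^\ast = \E_{kj}^\ell = f_k^\ell$, so the right-hand side of \eqref{eq:update_split} equals $d_k f_k^\ell \big(1 + \Delta t(\tilde{\nu}_{kk}+\tilde{\nu}_{kj})\big) = f_k^\ell$ by \eqref{di}. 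By uniqueness of the relaxation solution (equivalently, of the strictly convex minimization problem of Section \ref{sec:generaltime}), $f_k^\ast = f_k^\ell$, so the entropy is unchanged and equality holds; moreover $f_1^\ast = f_1^\ell = \E_{12}^\ell = \E_{12}^\ast$ and likewise $f_2^\ast = \E_{21}^\ast$, which is the final assertion.

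The step I expect to require the most care is the transfer from $f_k^\ast = f_k^\ell$ to $\E_{kj}^\ast = \E_{kj}^\ell$, used in both directions, since it rests on the target equilibria being functions of the conserved moments alone, together with uniqueness of the moment-to-parameter map; making this rigorous means invoking the well-posedness of Theorem 2.1 in \cite{Quantum} and the entropy-minimization characterization in Appendix \ref{sec2}. The passage from vanishing integrals to pointwise equalities is then routine once strict convexity of $h_\tau$ is recorded, provided the collision frequencies $\tilde{\nu}_{kj}$ are strictly positive.
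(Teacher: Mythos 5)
Your proof is correct and takes essentially the same route as the paper's: the paper gives no details itself, deferring to \cite{velocity-dependent}, and the argument there is precisely yours — equality forces the four nonpositive dissipation integrals in the proof of Proposition \ref{theo:entropy_inequality_relaxation} to vanish, strict monotonicity of $h_\tau'$ yields $f_k^\ast=\E_{kk}^\ast=\E_{kj}^\ast$ pointwise, the implicit update \eqref{eq:update_split} then collapses to $f_k^\ast=f_k^\ell$, and uniqueness of the moment-to-target map transfers the identities to level $\ell$, with the converse handled by direct verification plus uniqueness of the convex minimization. One phrasing slip worth fixing: the inter-species target $\E_{kj}$ is determined by $(n_1,n_2,P_1+P_2,E_1+E_2)$ rather than by the individual triples $(n_k,P_k,E_k)$ (the latter are in general \emph{not} conserved by the relaxation step, and Proposition \ref{theo:conservation_relaxation} does not claim they are); in your forward direction this is harmless, since at that point you have already established $f_k^\ast=f_k^\ell$ for both species, so all moments coincide and uniqueness of the targets applies directly.
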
 
\begin{proof}
The proof works analogously as and can be found in \cite{velocity-dependent}. 
\end{proof}

\begin{proposition}\label{theo:entropy_inequality_transport}
 Let $h_{\tau}$ \textcolor{black}{be} given by \eqref{entropy_h} .
 The transport step in the first-order splitting scheme in Section \ref{subsec:firstordersplit} combined with the first-order spatial discretization in Section \ref{sec:space} fulfills the discrete entropy inequality
\begin{align*}
    \sum_{i=0}^I \left\{ \int  h_{\tau}(f_{1,i}^{\ell+1}) + h_{\tau'}(f_{2,i}^{\ell+1})dp \right\} \Delta x
        \leq \sum_{i=0}^I  \left\{   \int \textcolor{black}{h_{\tau}}(f_{1,i}^{\ast}) + h_{\tau'}(f_{2,i}^{\ast}) dp \right\} \Delta x ,
 \end{align*}
 for periodic or zero boundary conditions, provided that 
  \begin{align}\label{CFL2}
     \Delta t \leq \frac{m_k\Delta x}{\max |p^1|}.
 \end{align}
\end{proposition}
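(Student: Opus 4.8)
The plan is to exploit the convex-combination structure of the first-order upwind update together with the convexity of $h_\tau$ that was already used in Proposition~\ref{theo:entropy_inequality_relaxation}. First I would make the transport update explicit for a fixed momentum $p$. Setting $\phi_{i+\frac12}=0$ in the flux and inserting it into \eqref{eq:transport_x} yields, with $s:=\operatorname{sign}(p^1)$ and $\lambda := \frac{\Delta t}{m_k\Delta x}|p^1|$,
\begin{align*}
f_{k,i}^{\ell+1} = (1-\lambda)\, f_{k,i}^{\ast} + \lambda\, f_{k,i-s}^{\ast}.
\end{align*}
The CFL bound \eqref{CFL2} guarantees $0\le\lambda\le1$, so $f_{k,i}^{\ell+1}$ is a genuine convex combination of two admissible states; positivity (Proposition~\ref{prop:positivity_1st_order}) and, for fermions, the upper bound (Proposition~\ref{prop:fermion-smaller-1}) keep all arguments in the domain on which $h_\tau$ is convex.

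Next I would apply Jensen's inequality cellwise, for each fixed $p$,
\begin{align*}
h_\tau(f_{k,i}^{\ell+1}) \le (1-\lambda)\, h_\tau(f_{k,i}^{\ast}) + \lambda\, h_\tau(f_{k,i-s}^{\ast}),
\end{align*}
and sum it over $i=0,\dots,I$. The first term on the right is then $(1-\lambda)\sum_i h_\tau(f_{k,i}^{\ast})$, while the second is the same sum shifted by one index and weighted by $\lambda$. If the shifted sum can be shown to equal the unshifted one, the $\lambda$-dependence cancels and I obtain $\sum_i h_\tau(f_{k,i}^{\ell+1}) \le \sum_i h_\tau(f_{k,i}^{\ast})$ for each $p$.

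The main obstacle is precisely this reindexing at the boundary. For periodic data the shift is a bijection of the cell set, so $\sum_i h_\tau(f_{k,i-s}^{\ast}) = \sum_i h_\tau(f_{k,i}^{\ast})$ exactly and the cancellation is immediate. For zero boundary conditions the shift exchanges one interior term for a ghost value; since the ghost distribution vanishes and $h_\tau(0)=0$, the incoming contribution is zero, and one must additionally use that the distribution vanishes at the outflow boundary cell so that the residual term $-\lambda\, h_\tau(f_{k,\mathrm{out}}^{\ast})$ also disappears. This last point is genuinely necessary because $h_\tau$ is not sign-definite (it is non-positive for fermions and bosons and can change sign for classical particles), so the boundary entropy flux cannot be discarded by a crude sign estimate. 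Once the per-$p$ inequality is established, I would integrate it over $p\in\mathbb{R}^3$, multiply by $\Delta x$, and add the $k=1$ (with $\tau$) and $k=2$ (with $\tau'$) contributions to recover the stated inequality; this final step is routine since the per-$p$ bound holds uniformly in $p$.
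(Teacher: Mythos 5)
Your proof is correct and follows essentially the same route as the paper, whose own proof is a one-line appeal to the convexity of $h_\tau$ and the argument of \cite{velocity-dependent} --- precisely the convex-combination form of the first-order upwind update under the CFL condition \eqref{CFL2}, Jensen's inequality cellwise, and reindexing of the shifted sum. Your explicit accounting of the boundary terms under zero boundary conditions (noting that the outflow cell must be empty because $h_\tau$ is not sign-definite, so the residual term $-\lambda\, h_\tau(f^\ast_{\mathrm{out}})$ cannot be discarded by a sign argument) spells out a point the paper leaves implicit.
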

\begin{proof}
We can apply the same proof as in \cite{velocity-dependent} because $h_{\tau}$ is convex.
\end{proof}

\noindent We combine the two propositions above and obtain the following: 
\begin{cor}
  For periodic or zero boundary conditions, the first-order splitting scheme \ref{subsec:firstordersplit} combined with the first-order numerical fluxes in Section \ref{sec:space} fulfills the discrete entropy inequality
 \begin{align*}
    \sum_{i=0}^I \left\{ \int  h_{\tau}(f_{1,i}^{\ell+1}) + h_{\tau'}(f_{2,i}^{\ell+1})dp \right\} \Delta x
        \leq \sum_{i=0}^I  \left\{   \int \textcolor{black}{h_{\tau}}(f_{1,i}^{\ast}) + \textcolor{black}{h_{\tau'}}(f_{2,i}^{\ast}) dp \right\} \Delta x
 \end{align*}
provided that \textcolor{black}{\eqref{CFL2}.}
\end{cor}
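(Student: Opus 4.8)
The plan is to obtain the claimed inequality by concatenating the two entropy estimates already proved for the constituent substeps of the first-order splitting scheme of Section~\ref{subsec:firstordersplit}. Recall that one full step advances the cell averages in two stages: the local backward-Euler relaxation $(f_{1,i}^\ell, f_{2,i}^\ell) \mapsto (f_{1,i}^\ast, f_{2,i}^\ast)$ governed by \eqref{eq:relax}, followed by the explicit transport $(f_{1,i}^\ast, f_{2,i}^\ast) \mapsto (f_{1,i}^{\ell+1}, f_{2,i}^{\ell+1})$. The idea is simply to apply each of the two propositions to its own stage and then chain the resulting bounds.

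First I would apply the transport estimate of Proposition~\ref{theo:entropy_inequality_transport}. Under the CFL condition \eqref{CFL2} and for periodic or zero boundary conditions, it gives directly
\begin{align*}
\sum_{i=0}^I \left\{ \int h_\tau(f_{1,i}^{\ell+1}) + h_{\tau'}(f_{2,i}^{\ell+1})\, dp \right\} \Delta x \leq \sum_{i=0}^I \left\{ \int h_\tau(f_{1,i}^\ast) + h_{\tau'}(f_{2,i}^\ast)\, dp \right\} \Delta x,
\end{align*}
which is precisely the asserted inequality. To see that the sharper full-step dissipation (with the right-hand side evaluated at level $\ell$) also holds, I would next invoke the relaxation estimate. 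Because the relaxation step acts independently in each spatial cell $I_i$, Proposition~\ref{theo:entropy_inequality_relaxation} applies verbatim to the data in each cell, so that
\begin{align*}
\int h_\tau(f_{1,i}^\ast) + h_{\tau'}(f_{2,i}^\ast)\, dp \leq \int h_\tau(f_{1,i}^\ell) + h_{\tau'}(f_{2,i}^\ell)\, dp
\end{align*}
for every $i$; multiplying by $\Delta x$ and summing over $i$ yields the corresponding summed bound. Chaining the two inequalities then shows that the total discrete entropy is nonincreasing across a full time step.

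Since both constituent propositions are established, there is no genuine obstacle, and the argument is pure bookkeeping. The two points demanding care are, first, that the relaxation estimate be read cell-by-cell (it is a statement about the local update $f_k^\ell \mapsto f_k^\ast$, which in the split scheme is carried out independently in each $I_i$), so that summation over cells with weight $\Delta x$ is legitimate; and second, that the hypotheses line up, namely that the CFL restriction \eqref{CFL2} and the boundary condition are needed only for the transport half, while the relaxation half is unconditionally entropy-dissipating thanks to the convexity of $h_\tau$ in \eqref{entropy_h} together with the conservation identities \eqref{eq:zero-intra}--\eqref{eq:zero-inter}. Thus the CFL condition appearing in the corollary is inherited entirely from Proposition~\ref{theo:entropy_inequality_transport}, and no additional time-step restriction enters through the relaxation stage.
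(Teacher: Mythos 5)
Your proposal is correct and follows essentially the same route as the paper, which obtains the corollary simply by combining Proposition \ref{theo:entropy_inequality_relaxation} (applied cell-by-cell and summed with weight $\Delta x$) with Proposition \ref{theo:entropy_inequality_transport} under the CFL condition \eqref{CFL2}. You also rightly observe that the inequality as literally printed (right-hand side evaluated at the stage $\ast$) is already the content of Proposition \ref{theo:entropy_inequality_transport} alone, the chaining with the relaxation estimate being needed only for the intended full-step bound with right-hand side at level $\ell$.
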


\subsection{Momentum discretization} \label{sec:velocity}
 Eventually, we discretize the momentum variable. We center the discrete momenta
  $p_q = (p_{q_1}^1, p_{q_2}^2,p_{q_3}^{3})^\top$, with $q=(q_1,q_2,q_3) \in \mathbb{N}^3_0$, around $ u_{\rm mix}$ with the mixture mean velocity 
\begin{align*}
    u_{\rm mix} = \frac{p_1+ p_2}{N_1+N_2 },
\end{align*}
and restrict them to a finite cube. This means, for each component $r \in \{1,2,3\}$,
\begin{align*}
p^{r} \in [m_k u_{{\rm mix}}^{r} - 6 m_k v_{{\rm{th}},k} ,\, m_k u_{{\rm mix}}^{r} + 6m_k v_{{\rm{th}},k} ],
\end{align*}
where $v_{{\rm{th}},k} = \sqrt{\frac{T_{\rm mix}}{m_k}}$ is the thermal velocity of species $i$ and
\begin{align} \label{eq:T_mix}
     T_{\rm mix} = \frac{n_1 T_1 + n_2 T_2}{n_1+n_2} + \frac{1}{3}\frac{N_1 N_2}{N_1+N_2}  \frac{|\frac{P_1}{N_1}-\frac{P_2}{N_2}|^2}{n_1 + n_2},
\end{align}
is the mixture temperature. An adequate resolution is ensured by the momentum mesh size $\Delta p_k = 0.25 m_k v_{{\rm{th}},k}$ in each direction, as in \cite{Mieussens2000}.  

We emphasize the advantage of the multi-species BGK model that it is possible to use different grids for each species/equation. This feature becomes beneficial when the species masses, and hence the thermal speeds, differ significantly.\\
\\
All momentum integrals are replaced by discrete sums using the trapezoidal rule, i.e.
\begin{align*}
   \int (\cdot) dp \approx \sum_q \omega_q (\cdot)_q (\Delta p_k)^3 ,
\end{align*}
where $\omega_q=\omega_{q_1}\omega_{q_2}\omega_{q_3}$ are the weights and
\begin{align*}
\omega_{q_p} = \begin{cases} 1 \quad \text{if } \min(q_p) < q_p < \max(q_p),\\
\frac{1}{2} \quad \text{else}.
\end{cases}
\end{align*}
We need to distinguish between discrete and continuous moments, especially when determining the discrete local equilibria $\E_{kk,q}$ and $\E_{kj,q}$.  Since the minimization of \eqref{eq:potentialfunction_single} and \eqref{eq:potentialfunction_mixed} is solved using a discrete momentum grid and discrete moments $\bar{\mu}_{k},\bar{\mu}$ as input, the parameters $\alpha_{kk}$ and $\alpha_{kj}$ are determined such that $\E_{kk,q}$ and $\E_{kj,q}$ have the desired discrete moments. Thus, the conservation and entropy properties are fulfilled at the discrete level. (A similar approach for the standard single-species BGK equation is given in \cite{Mieussens2000}.)

\begin{theorem} \label{theo:properties_v_discrete}
Propositions \ref{prop:positivity_1st_order}, \ref{prop:positivity_2nd_order}, and \ref{theo:entropy_inequality_relaxation}-\ref{theo:entropy_inequality_transport} all hold true after replacing continuous integrals by their respective quadratures. 
Additionally, the scheme in Section \ref{sec:generaltime} satisfies the following conservation properties for $\ell\geq 0$ 
{\small
\begin{align*}
    \sum_{i,q}  \omega_q\left(f_{1,iq}^\ell \mbp_{1,q} (\Delta p_1)^3+ f_{2,iq}^\ell \mbp_{2,q}(\Delta p_2)^3\right)  \Delta x = \sum_{i,q}  \omega_q \left(f_{1,iq}^0 \mbp_{1,q} (\Delta p_1)^3+ f_{2,iq}^0 \mbp_{2,q}(\Delta p_2)^3\right)  \Delta x,
\end{align*}}
with $\mbp_{k,q} = (1,p_q,\frac{|p_q|^2}{2m_k})^\top$ and $f_{k,iq}^\ell \approx f_{k,i}^\ell(p_q)$.
\end{theorem}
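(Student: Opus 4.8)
The plan is to observe that every proof cited in the statement was carried out in a way that is insensitive to the momentum quadrature, so that replacing $\int(\cdot)\,dp$ by $\sum_q \omega_q (\cdot)_q (\Delta p_k)^3$ leaves each argument intact. The crucial structural input---supplied by the construction in Section \ref{sec:velocity}---is that the discrete equilibria $\E_{kk,q}$ and $\E_{kj,q}$ are obtained by minimizing the potential functions \eqref{eq:potentialfunction_single} and \eqref{eq:potentialfunction_mixed} with the \emph{discrete} moments $\bar{\mu}_k,\bar{\mu}$ as input. The discrete first-order optimality conditions therefore coincide with the discrete analogues of the conservation constraints \eqref{conserv 1}--\eqref{conserv 2}, i.e. the quadrature moments of the target functions match those of the data exactly. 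This single fact is what makes the remaining arguments transfer.

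First, for the positivity statements (Propositions \ref{prop:positivity_1st_order} and \ref{prop:positivity_2nd_order}) I would re-run the respective convex-combination estimates at each fixed grid node $p_q$. The relaxation update writes $f_{k,q}^{\ast}$ (and, for the IMEX method, each stage) as a convex combination of $f_{k,q}^\ell$ and the target values $\E_{kk,q}^{\ast},\E_{kj,q}^{\ast}$, which are positive because they are Fermi--Dirac, Bose--Einstein, or Maxwellian profiles evaluated at $p_q$; the same convex structure simultaneously carries over the fermionic upper bound. The transport update (for Proposition \ref{prop:positivity_1st_order}) is again a convex combination of neighbouring nodal values under the stated CFL condition. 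No momentum integral enters these inequalities, so they hold verbatim on the grid.

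Next, for the entropy inequalities (Propositions \ref{theo:entropy_inequality_relaxation} and \ref{theo:entropy_inequality_transport}) the two ingredients are the pointwise convexity of $h_\tau$ (inequality \eqref{eq:inequality}), which is a nodal statement, and the vanishing of the coupling terms \eqref{eq:zero-intra}--\eqref{eq:zero-inter}. The latter are precisely the discrete moment identities guaranteed by the construction above: with $\int\to\sum_q\omega_q(\cdot)_q(\Delta p_k)^3$ and $h'_{\tau_k}(\E_{kk,q}^{\ast})=\alpha_k\cdot\mbp_{k,q}$, the weighted sums telescope to zero exactly as in the semi-discrete case. Since the quadrature weights satisfy $\omega_q\ge 0$, summing the pointwise convexity inequality against them preserves its direction, and the relaxation entropy inequality follows; the transport entropy inequality is a purely spatial convexity and flux argument, untouched by the momentum discretization.

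Finally, for the global conservation identity I would combine the discrete relaxation conservation (the analogue of Proposition \ref{theo:conservation_relaxation}, immediate from exact discrete moment-matching) with the discrete transport conservation (the analogue of Proposition \ref{theo:conservation_transport}, which follows from telescoping of the numerical fluxes under periodic or zero boundary conditions), treating the second-order IMEX scheme by decomposing each stage into its relaxation and transport substeps. Chaining these single-step identities from step $\ell$ back to the initial data $\ell=0$ yields the stated equality. The main obstacle---and the only point requiring genuine care---is this first ingredient: verifying that the discrete optimality conditions for \eqref{eq:potentialfunction_single}--\eqref{eq:potentialfunction_mixed} enforce exact equality of the quadrature moments, rather than merely up to the quadrature error of the continuous constraints. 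Once this discrete moment-matching is established, every subsequent estimate is a nodewise repetition of the semi-discrete proofs.
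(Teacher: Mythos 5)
Your proposal is correct and follows essentially the same route as the paper: the paper gives no separate formal proof of this theorem, but justifies it by precisely the observation you identify as the crucial ingredient — the discrete equilibria are computed by minimizing \eqref{eq:potentialfunction_single}--\eqref{eq:potentialfunction_mixed} with the \emph{discrete} moments $\bar{\mu}_k,\bar{\mu}$ as input, so the discrete first-order optimality conditions (the vanishing of the quadrature-based gradients in Section \ref{sec:velocity}) enforce exact discrete moment matching, after which the positivity, entropy, and conservation arguments carry over nodewise and by telescoping. Your elaboration of the individual propositions and the chaining of relaxation and transport conservation back to $\ell=0$ fills in exactly what the paper leaves implicit.
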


\paragraph{Optimization algorithm} The minimization of \eqref{eq:potentialfunction_single} and \eqref{eq:potentialfunction_mixed} is solved by Newton's method which requires the evaluation of the gradients
\begin{align*} 
    \nabla_{\alpha_k} \varphi_k &\approx -\sum_q \omega_q d_{k,q} \tilde{\nu}_{kk,q}\,\E_{kk,q}\, \mbp_{k,q} (\Delta p_k)^3 +  \bar{\mu}_{k}, \\
    \nabla_{\alpha} \varphi &\approx   -\sum_q \omega_q d_{1,q}\nu_{12,q} \, \E_{12,q} \,\mbp_{12,q} (\Delta p_1)^3 -\sum_q \omega_q d_{2,q} \tilde{\nu}_{21,q}\, \E_{21,q}\, \mbp_{21,q}(\Delta p_2)^3
    + \bar{\mu},   
 \end{align*}   
and the Hessians
 \begin{align*}
    \nabla_{\alpha_k}^2 \varphi_k &\approx   \sum_q \omega_q d_{k,q} \tilde{\nu}_{kk,q} \, \zeta(\E_{kk,q},\tau_k)\, \mbp_{k,q} \otimes \mbp_{k}(\Delta p_k)^3, \\
    \nabla_{\alpha}^2 \varphi &\approx  \sum_q \omega_q d_{1,q} \tilde{\nu}_{12,q} \, \zeta(\E_{12,q},\tau_1) \,\mbp_{12}\otimes \mbp_{12,q}(\Delta p_1)^3 \\&+  \sum_q \omega_q d_{2,q} \tilde{\nu}_{21,q}\, \zeta(\E_{21,q},\tau_2)\, \mbp_{21}\otimes \mbp_{21,q}(\Delta p_2)^3 ,
\end{align*}
where
$\mbp_{12,q} = (1,0,p_{1,q},\frac{|p_{1,q}|^2}{2m_1})^\top$, $\mbp_{21,q}=  (0,\mbp_{2,q})^\top$ and
\begin{align*}
    \zeta(g,\tau) = \begin{cases}
     g \quad &\text{for } \tau = 0, \\
     g^2 e^{-\alpha\cdot \mbp_i} \quad &\text{for } \tau = \pm 1.
    \end{cases}
\end{align*}
The input data in \eqref{eq:input_mu_i} is computed in a straight-forward way:
\begin{align*}
    \bar{\mu}_{k} \approx \sum_q \omega_q d_{k,q} \tilde{\nu}_{kk,q} \,G_{k,q}\, \mbp_{k,q} (\Delta p_k)^3.
\end{align*}
Analogously for the input data $\bar{\mu}$ in \eqref{eq:input_mu}.

\section{Numerical results}
In this section, we present several numerical tests. We illustrate the properties of our model and demonstrate the properties of our scheme.

\label{sec:results}
\subsection{Relaxation in a homogeneous setting}
\subsubsection{Decay rates and illustration of the schemes' properties} \label{test:decay-rates}
We validate our numerical scheme for quantum particles and verify the decay rates for the mean velocities and kinetic temperatures which are  given analytically in Section \ref{sec:macro-equations}.\\
\\
Initially, we set the distribution functions to Maxwellians 
\begin{align} \label{eq:Maxwellian}
    f_k = \mathcal{M}[n_k,U_k,T_k,m_k] = \frac{n_k}{(2\pi T_k m_k)^{3/2}} \exp \left( -\frac{|p - m_k U_k|^2}{2T_k m_k} \right) .
\end{align}
with 
\begin{align*}
    m_1 &= 1.0, \quad n_1 = 1.0, \quad U_1 = (0.5,0,0)^\top, \quad T_1 = 1.0, \\
    m_2 &= 1.5, \quad  n_2 = 1.2, \quad  U_2 = (0.1,0,0)^\top, \quad  T_2 = 0.5.
\end{align*}
These initial data are chosen to only illustrate the basic properties of the model and scheme, respectively, but we do not incorporate further physical details (e.g. for a specific quantum regime). The collision frequencies are set to $\tilde{\nu}_{kj}=1$.

For the simulation, we use a momentum grid with $48^3$ nodes and the first-order splitting scheme from Section \ref{subsec:firstordersplit} with the time step $\Delta t = 0.01$.\\
\\
We study any combination of classical particles, fermions and bosons. \textcolor{black}{For example, in} the interactions of fermions with fermions, we illustrate the evolution of the entropy and the entropy dissipation in Figure \ref{fig:entropy_quantum}. In Figure \ref{fig:conservation_quantum}, we demonstrate the conservation properties  where the numerical oscillations in mass, total momentum and total energy are only of the order $10^{-14}$.

In Figure \ref{fig:decay_u_quantum}, we verify the behavior of the mean velocities converging exponentially fast to a common value. The numerical decay rate and the analytical one \eqref{eq:quantum-decay-momentum} coincide very well. We only display the rate for the interactions of fermions with fermions because the decay rate is independent of the type of the species.

In Figure \ref{fig:decay_T_quantum}, we consider the behavior of the temperatures where we distinguish between the kinetic temperatures \textcolor{black}{$T_k$ and the physical temperatures $\vartheta_k$ of the fluid.
\begin{align} \label{eq:T_kinetic}
T_k = \frac{2}{3}\left(\frac{E_k}{ n_k}- \frac{1}{2} \frac{|P_k|^2}{m_k n_k^2}\right), \quad \vartheta_k = \frac{1}{2a_k},
\end{align} 
where $a_k$ is defined in \eqref{ai} and
$\vartheta_k$ corresponds to the temperature defined in \cite{HuJin2011}.} 
In the first column, we observe that the kinetic temperatures do not converge to a common value whenever a quantum particle is involved. 
This is also visible in the second column. The numerical and analytical decay rates for the kinetic temperatures coincide very well, and the difference converges to a constant value for quantum particles. Such behavior of the kinetic temperatures for quantum particles comes by an additional term for the decay rates \eqref{eq:quantum-decay-kinetic-T} which vanishes for classical-classical interactions, see Remark \ref{remark:classic-classic-2}. Additionally, we compare the results to the physical temperatures $\vartheta_k$. Even though the kinetic temperatures behave differently for quantum particles, the physical temperatures converge to a common value in all cases as predicted by the theory. 

{
\begin{figure}[htb]
\centering
\hspace{0.9cm} \begin{subfigure}[c]{0.35\textwidth}
\includegraphics[width=\textwidth]{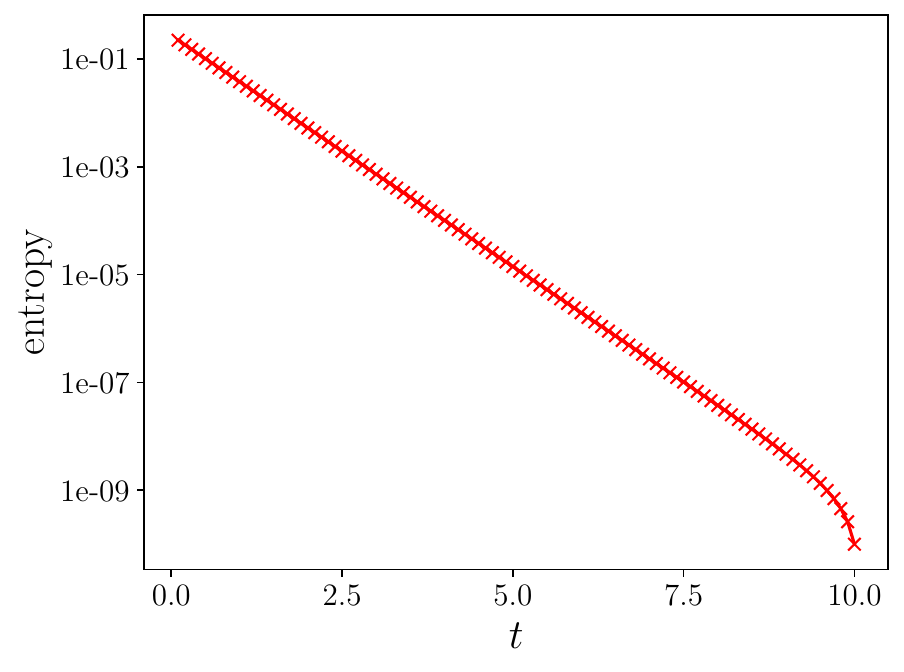}
\subcaption{entropy}
\end{subfigure}
\hspace{0.9cm}
\begin{subfigure}[c]{0.35\textwidth}
\includegraphics[width=\textwidth]{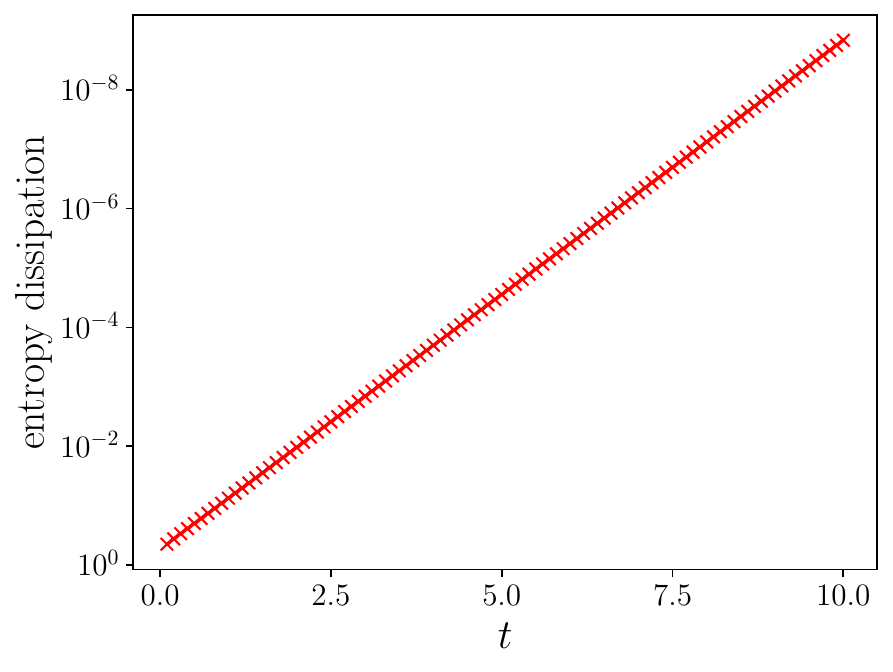}
\subcaption{entropy dissipation }
\end{subfigure}
    \caption{Entropy and entropy dissipation for the test case in Section \ref{test:decay-rates}, exemplary for fermion-fermion interactions. The entropy decays monotonically.
    }
    \label{fig:entropy_quantum}
\end{figure}}

{\scriptsize
\begin{figure}[htb]
\centering
\begin{subfigure}[c]{0.3\textwidth}
\includegraphics[width=\textwidth]{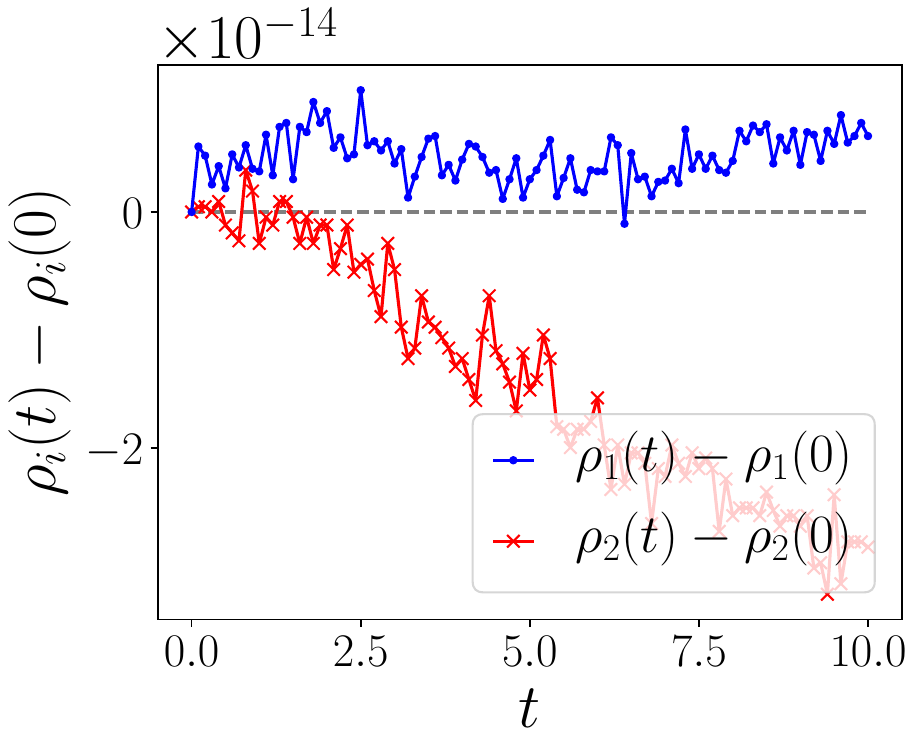}
\subcaption{species mass}
\end{subfigure}
\hfill
\begin{subfigure}[c]{0.3\textwidth}
\includegraphics[width=\textwidth]{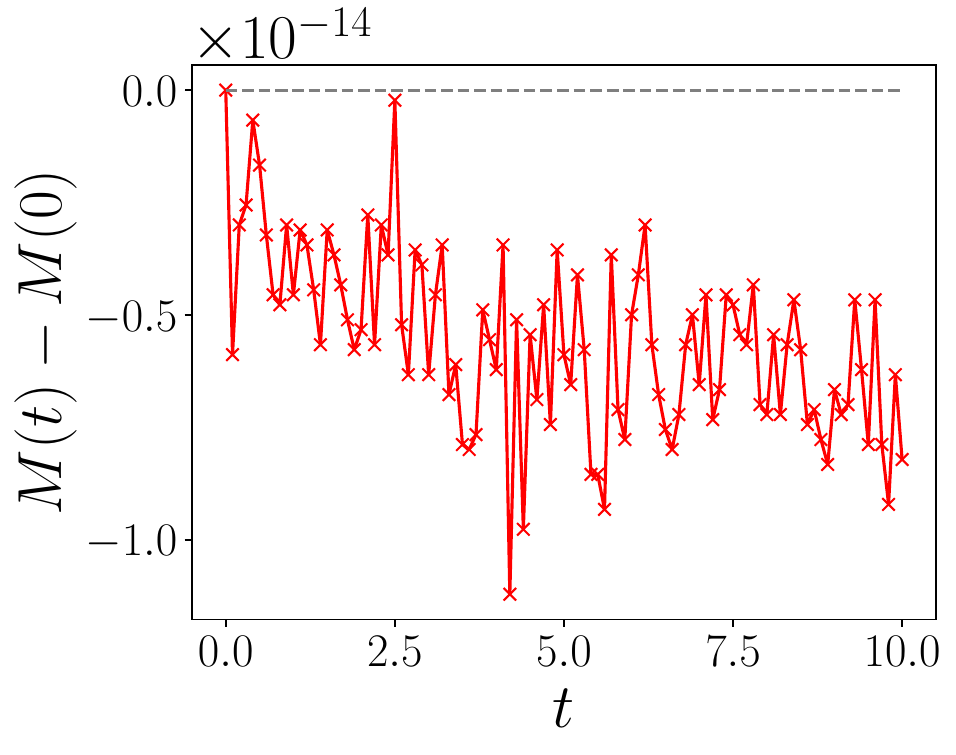}
\subcaption{total momentum}
\end{subfigure}
\hfill
\begin{subfigure}[c]{0.3\textwidth}
\includegraphics[width=\textwidth]{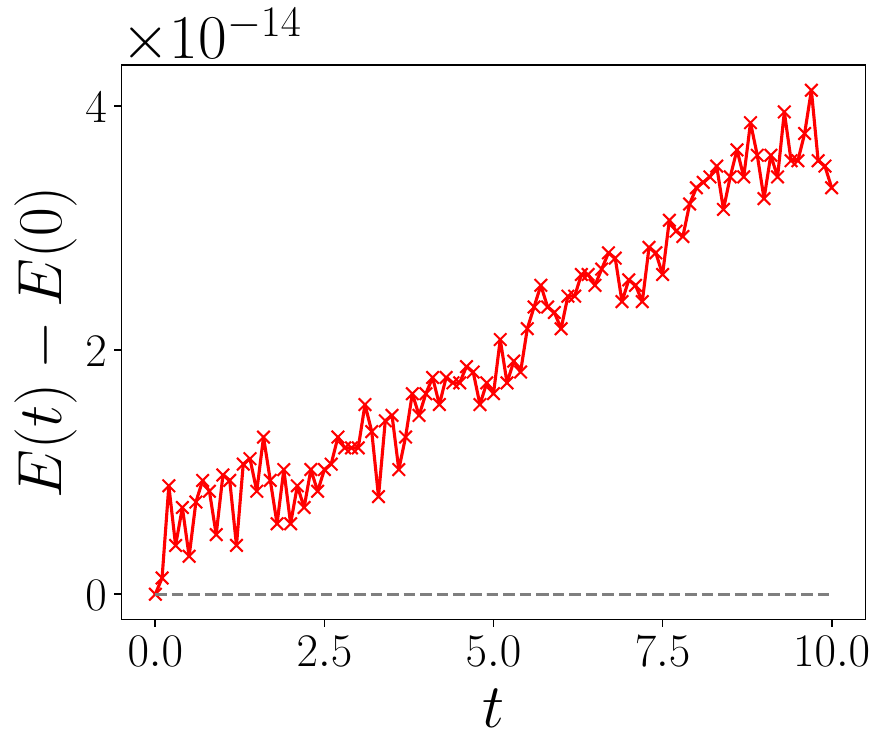}
\subcaption{total energy}
\end{subfigure}
    \caption{Illustration of the conservation properties for the test case in Section \ref{test:decay-rates}, exemplary for fermion-fermion interactions.  The mass densities of each species ($\rho_k=m_k n_k$), 
    the total momentum  ($M$) and total energy ($E$) have small oscillations of the order of $10^{-14}$.  }
    \label{fig:conservation_quantum}
\end{figure}}

\begin{figure}[htb] 
\centering
\hspace{0.6cm} \begin{subfigure}[c]{0.35\textwidth}
\includegraphics[width=\textwidth]{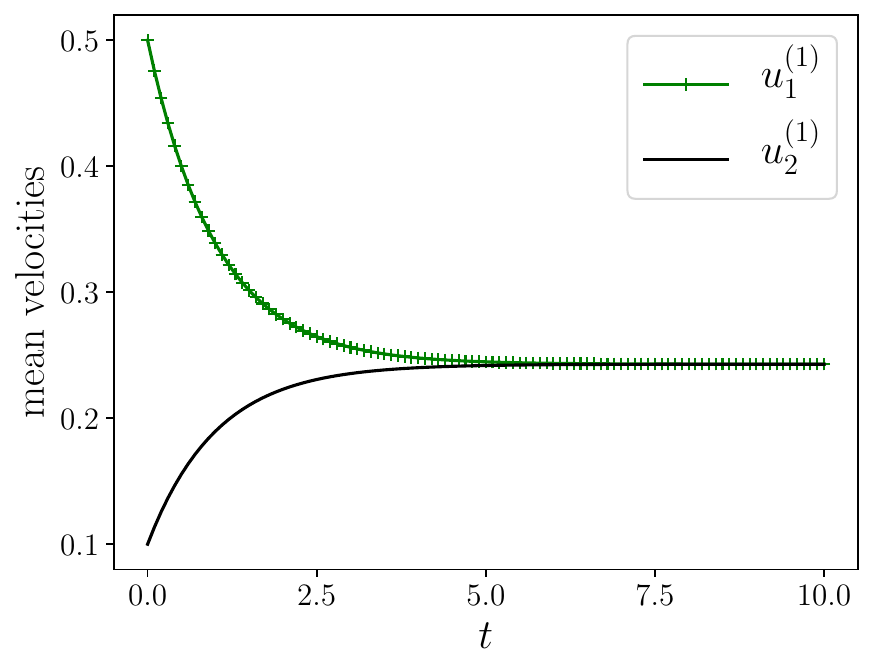}
\end{subfigure}
\hspace{0.6cm}
\begin{subfigure}[c]{0.35\textwidth}
\includegraphics[width=\textwidth]{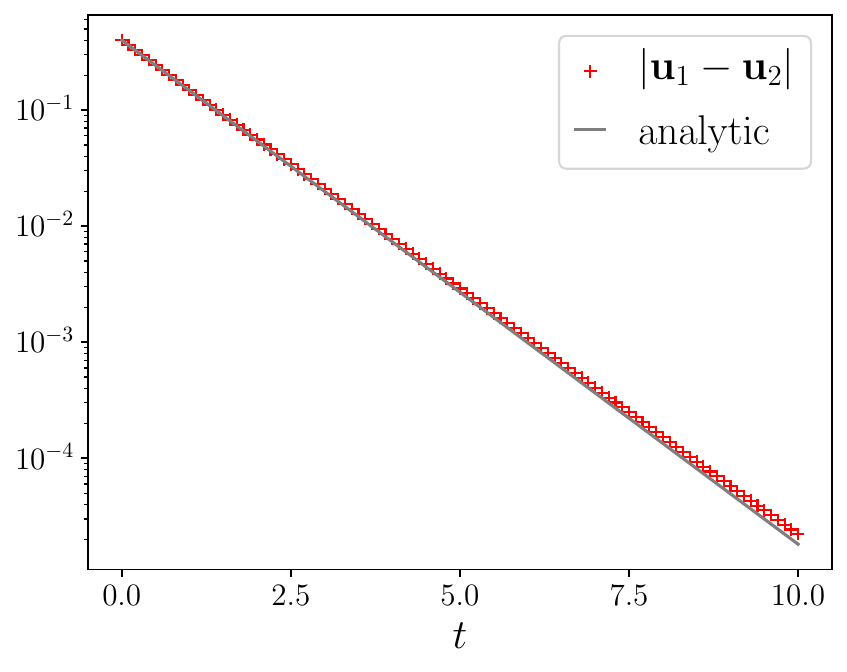}
\end{subfigure}
    \caption{Mean velocities for the test case in Section \ref{test:decay-rates}, exemplary for fermion-fermion interactions. The mean velocities converge exponentially fast to a common value, and the numerical decay rate coincides very well with the analytical one. 
    }
    \label{fig:decay_u_quantum}
\end{figure}

\begin{figure}[htb]
\centering
\begin{subfigure}[r]{0.20\textwidth}
    fermion-fermion:
\end{subfigure}
\hfill
\begin{subfigure}[c]{0.24\textwidth}
\includegraphics[width=\textwidth]{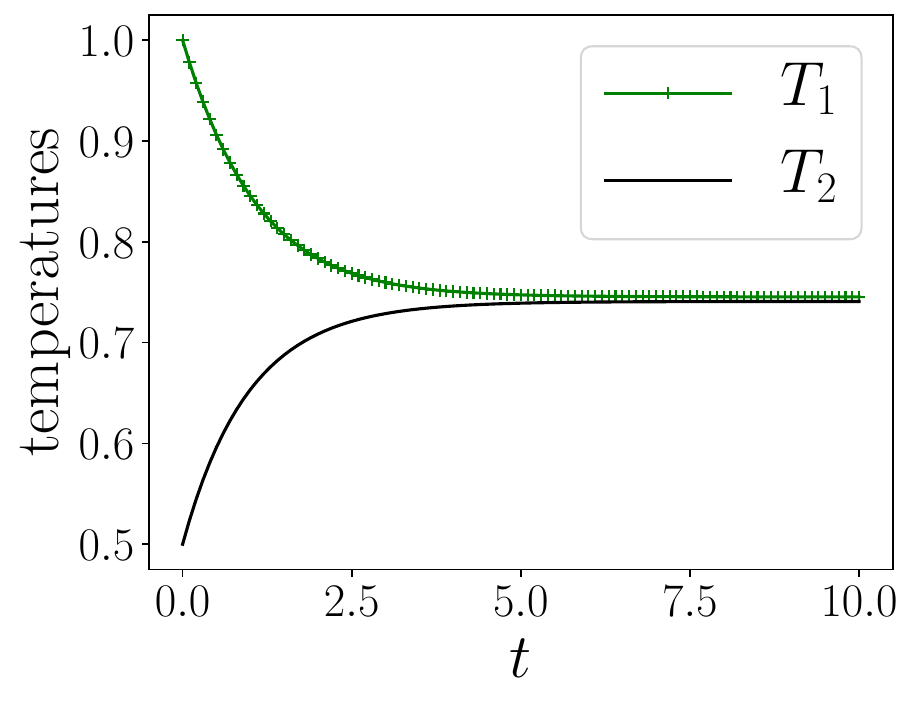}
\end{subfigure}
\hfill
\begin{subfigure}[c]{0.24\textwidth}
\includegraphics[width=\textwidth]{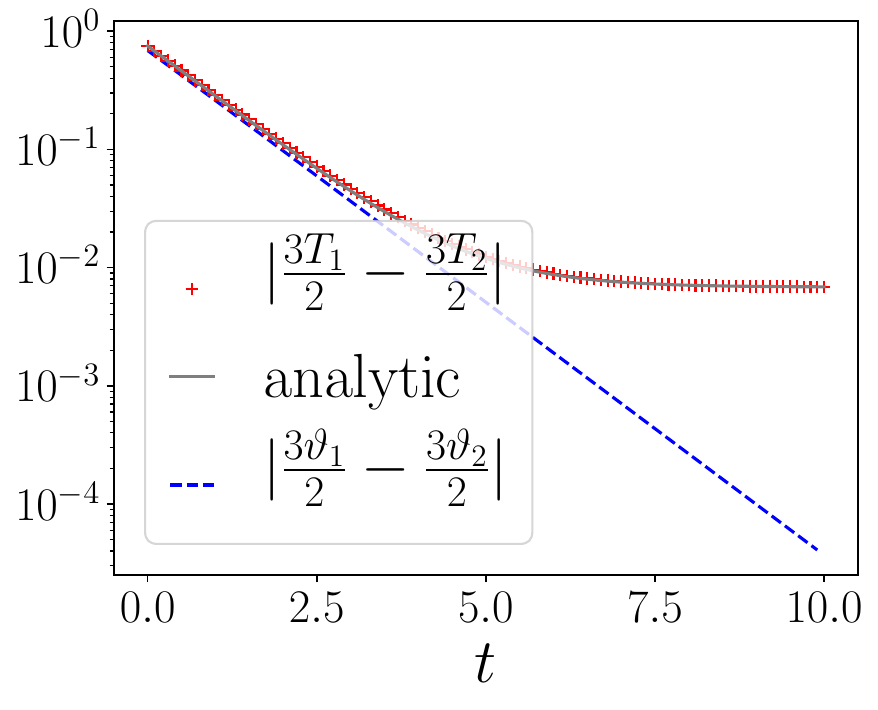}
\end{subfigure}

\begin{subfigure}[r]{0.2\textwidth}
    boson-fermion:
\end{subfigure}
\hfill
\begin{subfigure}[c]{0.24\textwidth}
\includegraphics[width=\textwidth]{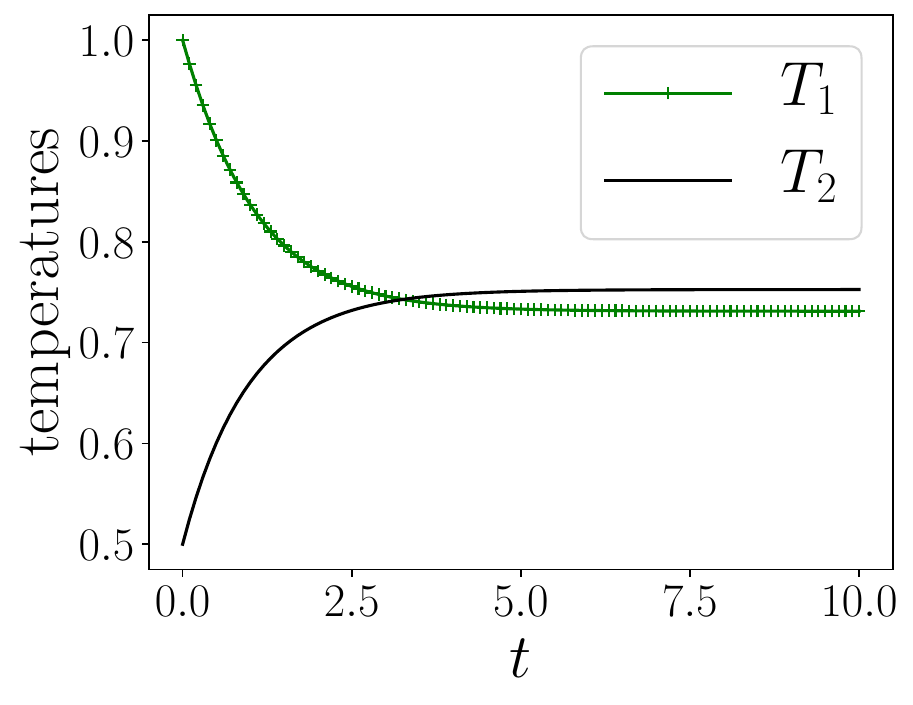}
\end{subfigure}
\hfill
\begin{subfigure}[c]{0.24\textwidth}
\includegraphics[width=\textwidth]{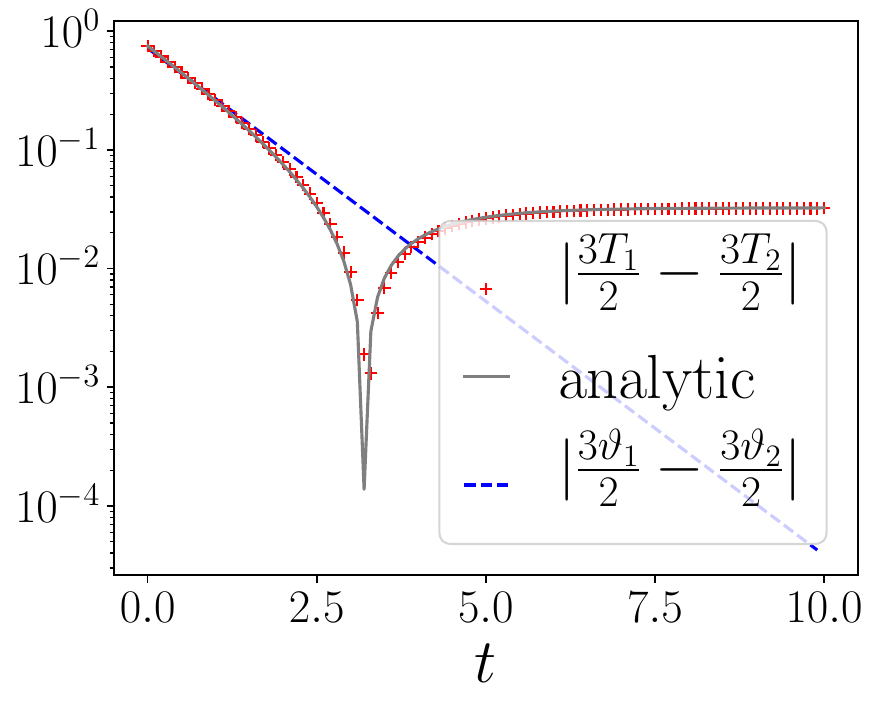}
\end{subfigure}

\begin{subfigure}[r]{0.2\textwidth}
    fermion-classic:
\end{subfigure}
\hfill
\begin{subfigure}[c]{0.24\textwidth}
\includegraphics[width=\textwidth]{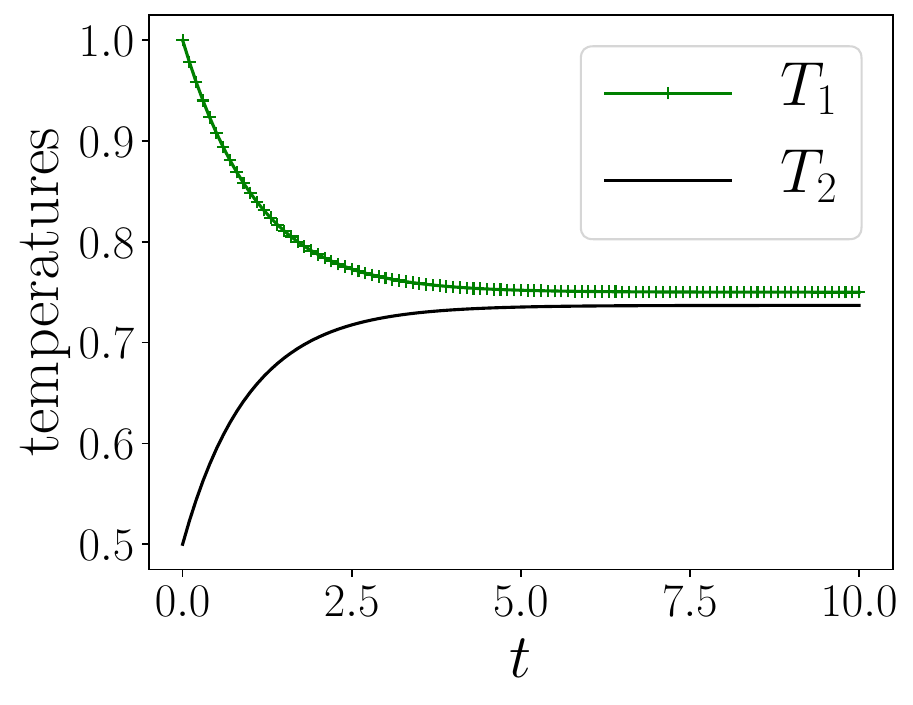}
\end{subfigure}
\hfill
\begin{subfigure}[c]{0.24\textwidth}
\includegraphics[width=\textwidth]{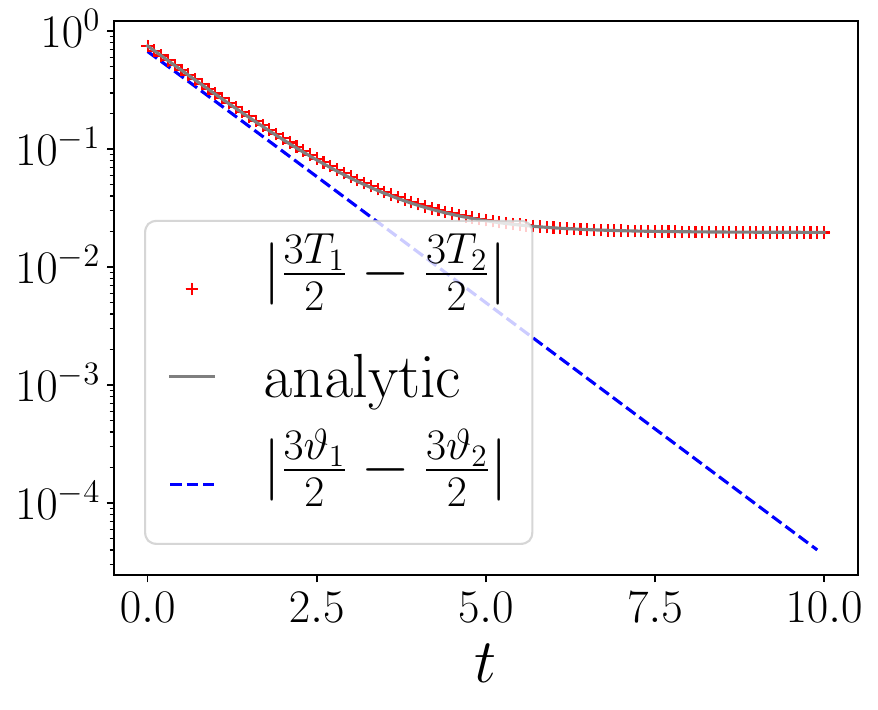}
\end{subfigure}

\begin{subfigure}[r]{0.2\textwidth}
    boson-boson:
\end{subfigure}
\hfill
\begin{subfigure}[c]{0.24\textwidth}
\includegraphics[width=\textwidth]{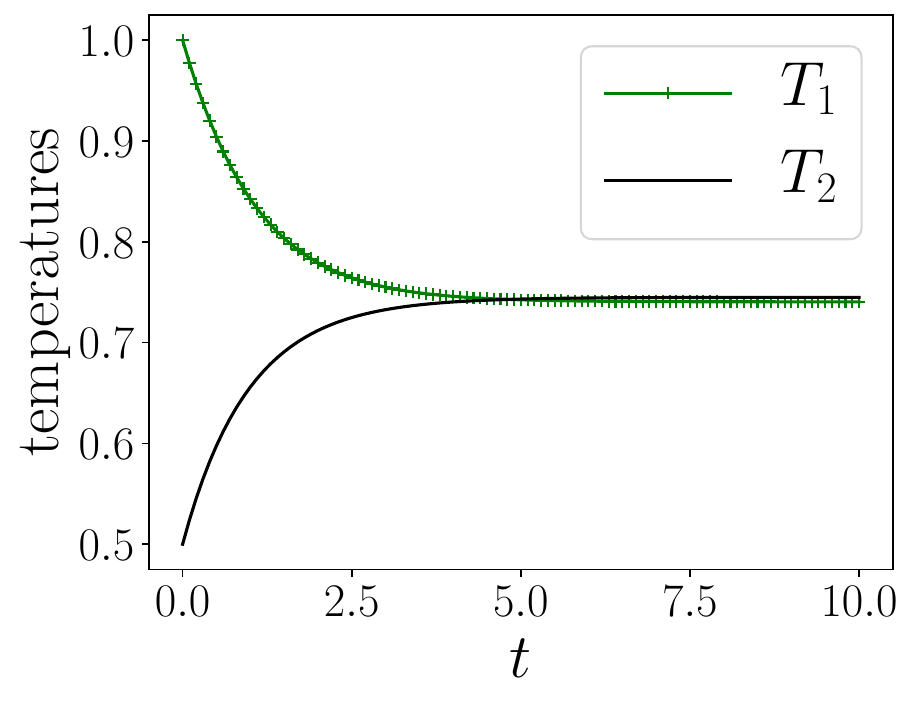}
\end{subfigure}
\hfill
\begin{subfigure}[c]{0.24\textwidth}
\includegraphics[width=\textwidth]{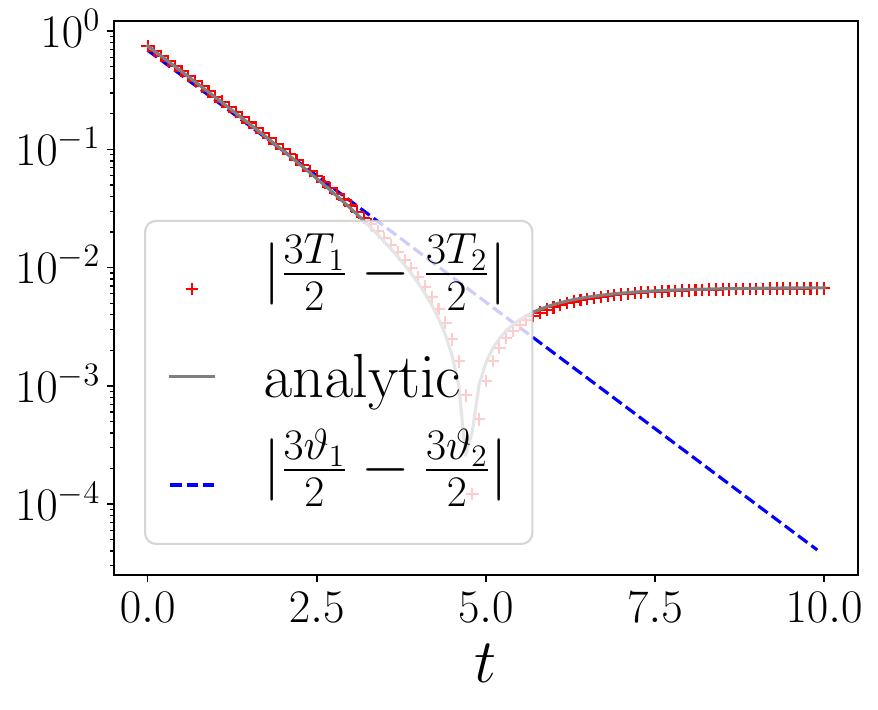}
\end{subfigure}

\begin{subfigure}[r]{0.2\textwidth}
    boson-classic:
\end{subfigure}
\hfill
\begin{subfigure}[c]{0.24\textwidth}
\includegraphics[width=\textwidth]{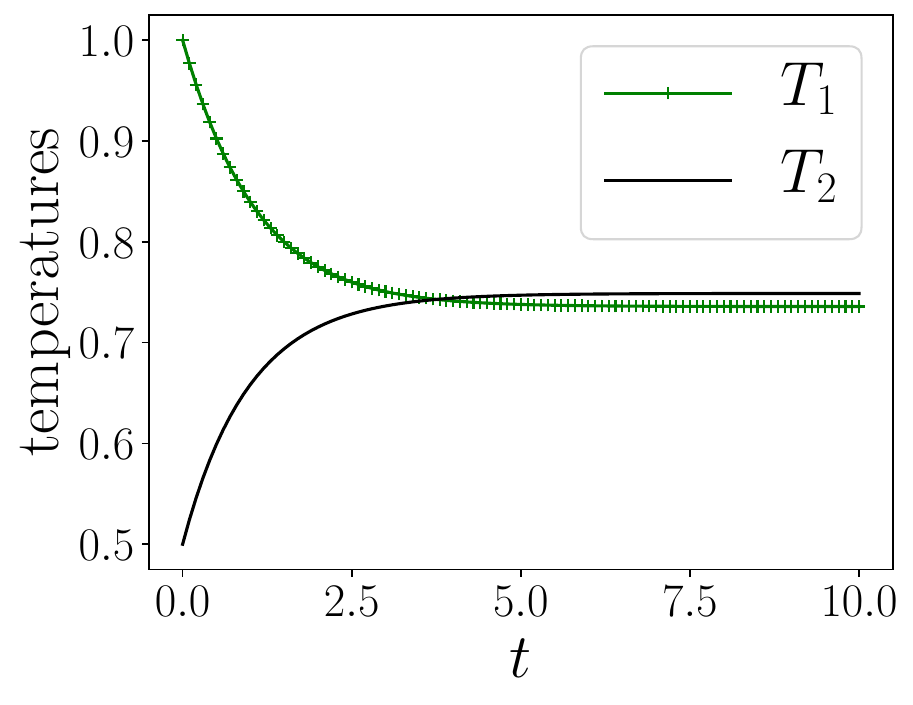}
\end{subfigure}
\hfill
\begin{subfigure}[c]{0.24\textwidth}
\includegraphics[width=\textwidth]{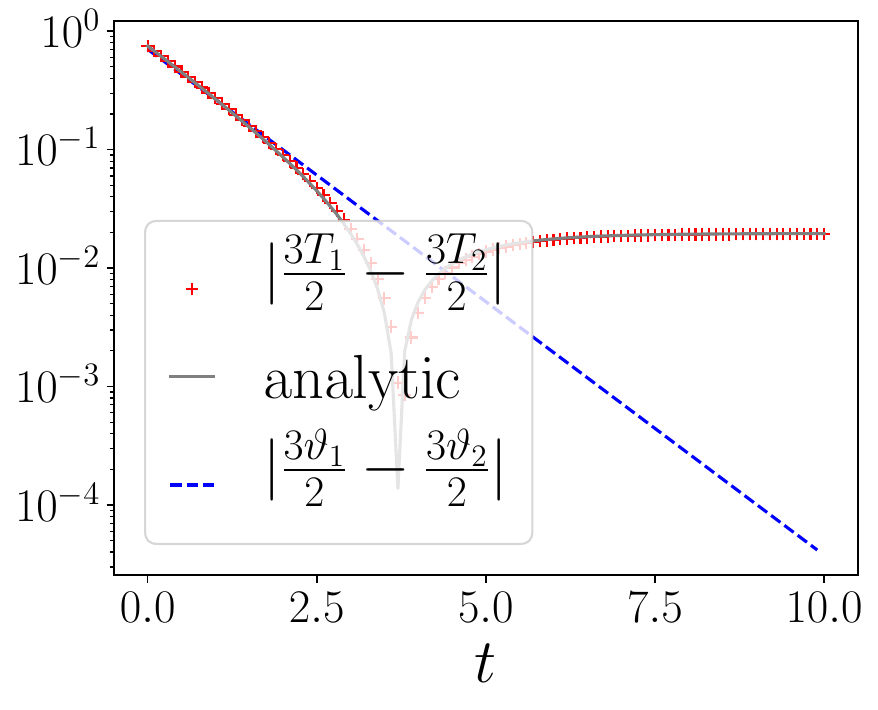}
\end{subfigure}

\begin{subfigure}[r]{0.2\textwidth}
    classic-classic:
\end{subfigure}
\hfill
\begin{subfigure}[c]{0.24\textwidth}
\includegraphics[width=\textwidth]{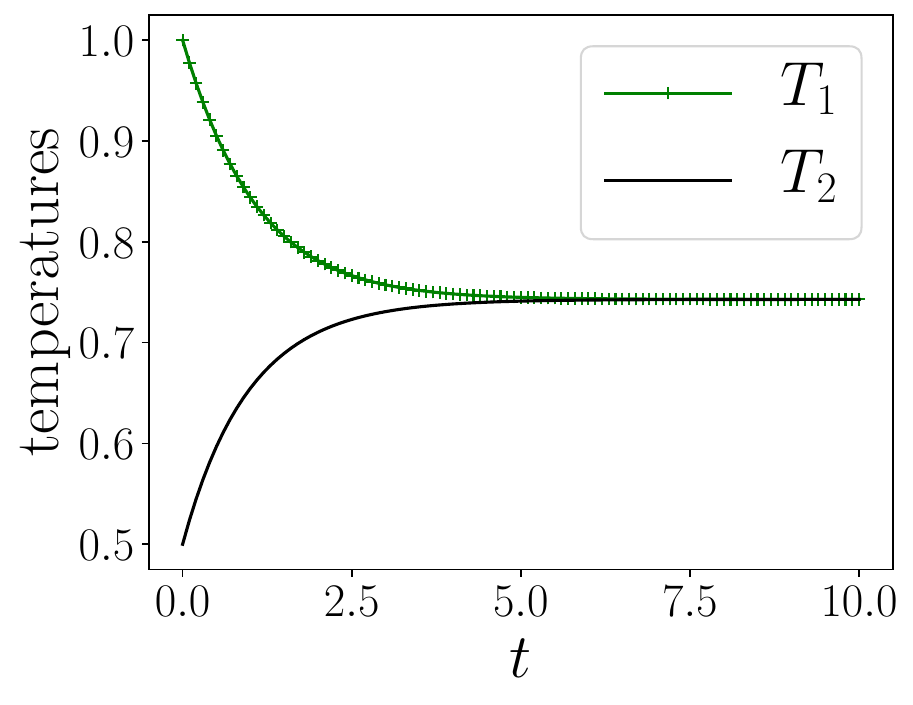}
\end{subfigure}
\hfill
\begin{subfigure}[c]{0.24\textwidth}
\includegraphics[width=\textwidth]{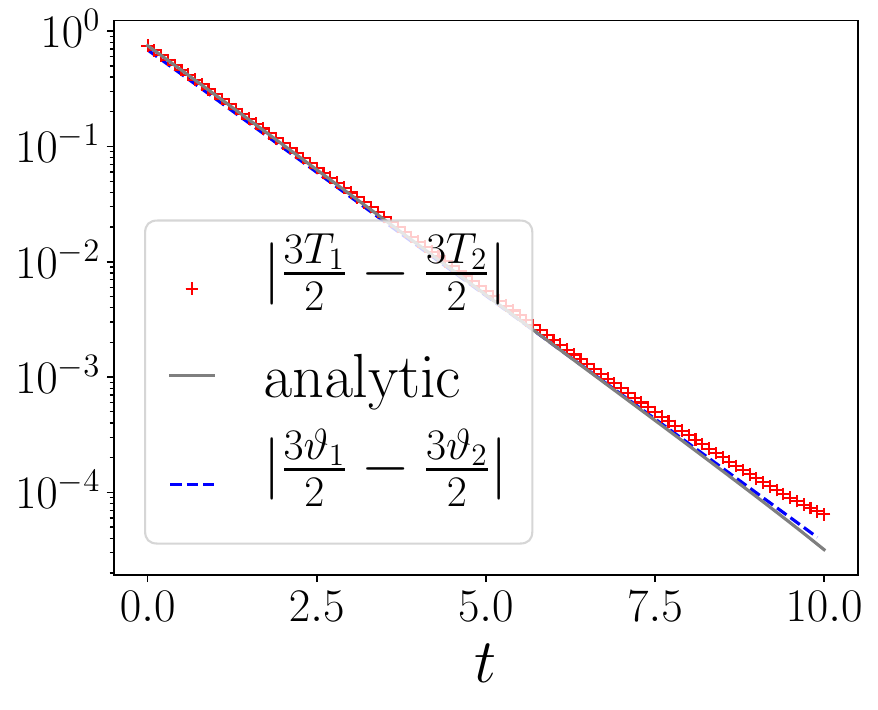}
\end{subfigure}

    \caption{Evolution of the temperatures for the test case in Section \ref{test:decay-rates}. First column: kinetic temperatures $T_k$; whenever a quantum particle is involved, the kinetic temperatures do not converge to a common value. Second column: decay rates for kinetic temperatures in logarithmic scale --- numerical and analytical values coincide very well. Additionally, the difference between the physical temperatures $\vartheta_k$ is displayed which decays exponentially fast, whereas the kinetic temperatures $T_k$ behave differently for quantum particles. }
    \label{fig:decay_T_quantum}
\end{figure}
\FloatBarrier

\subsubsection{Sulfur-Flourine-electrons test case}
\label{test:SFe-quantum}

We run a space homogeneous, 3-species test case inspired by \cite{HHM}. In the following, the index $S$ refers to sulfur ions, the index $F$ refers to fluorine ions, and the index $e$ refers to electrons. For convenience, we clarify in the Appendix how the model and the numerical scheme can be extended straight-forwardly to more than two species.

We incorporate collision frequencies $\nu_{kj}=0.00753 \frac{1}{{\rm fs}}$, \textcolor{black}{where $1{\rm fs} = 10^{-15} {\rm s}$,} which are approximately of the same order as those used in \cite{HHM}. The masses of the species are
\begin{align*}
m_S &= 32.07 \text{u} - 11 m_e,  \quad m_F = 19 \text{u} - 7 m_e, \quad m_e = 9.11\cdot 10^{-28} \text{g} 
\end{align*}
with the atomic mass  $\text{u} = 1.6605\cdot 10^{-24}\,\text{g}$. The ions are treated like classical particles and initialized by $f_k = \mathcal{M}[n_k,U_k,T_k,m_k]$ $(k=S,F)$ with
\begin{align*}
n_S &= 10^{19}\,\text{cm}^{-3}, \quad n_F = 6\cdot 10^{19}\,\text{cm}^{-3}, \\
U_S &= U_F = 0  \,\frac{\text{cm}}{\text{s}},\\
T_S &= T_F = 15\,\text{eV},
\end{align*}
where $M$ is defined in \eqref{eq:Maxwellian}.
For the electrons, we compare the behavior when they are treated like classical particles to the behavior when they are treated like fermions. In the former case, we initialize $f_e = \mathcal{M}[n_e,U_e,\vartheta_e,m_e]$ with
\begin{align*}
n_e = 53\cdot 10^{19}\,\text{cm}^{-3}, \quad
U_e = 0  \,\frac{\text{cm}}{\text{s}}, \quad
\vartheta_e = 100\,\text{eV}.
\end{align*}
It holds $T_e=\vartheta_e$ for classical particles. In the latter case --- electrons being treated as fermions --- we initialize the distribution function by a Fermi-Dirac function, but we keep the same macroscopic quantities, i.e.
\begin{align*}
    f_e = \left[ \frac{(2 \pi m_e \vartheta_e)^{3/2}}{\alpha\, n_e} e^{\frac{|p|^2}{2m_e\vartheta_e}} +1 \right]^{-1}
\end{align*}
with the scaling factor $\alpha=1.061711634$ which leads to the desired $\int f_e dp = n_e$.

We use  momentum grids with $48^3$ nodes for each species, and we use the second-order IMEX RK scheme from Section \ref{subsec:secondorderIMEX} with time step $\Delta t = 0.1$ fs. \\
\\
We illustrate the evolution of the temperatures in Figure \ref{fig:SFe}. For the purely classic test case, the physical and the kinetic temperatures coincide such that the temperature in equilibrium $T_{\rm eq}$ can be precomputed from the initial data \cite{velocity-dependent}:
\begin{align} \label{eq:T_eq}
    T_{\rm eq} = T_{\rm mix}(0) \overset{\eqref{eq:T_mix}}{=} \frac{n_1 T_1(0) + n_2 T_2(0) + n_3 T_3(0)}{n_1+n_2+n_3}. 
\end{align} 
In Figure \ref{fig:SFe}, we observe that all species temperatures converge to that value for the classical simulation. Additionally, we display the results when we consider the electrons to be fermions instead. As predicted by the theory, the physical temperatures converge to a common value. However, the physical temperatures generally differ from the kinetic temperatures in the quantum case. As a consequence, the physical temperature in equilibrium does not equal $T_{\rm eq}$.

\begin{figure}[htb]
\centering
\includegraphics[width=0.75\textwidth]{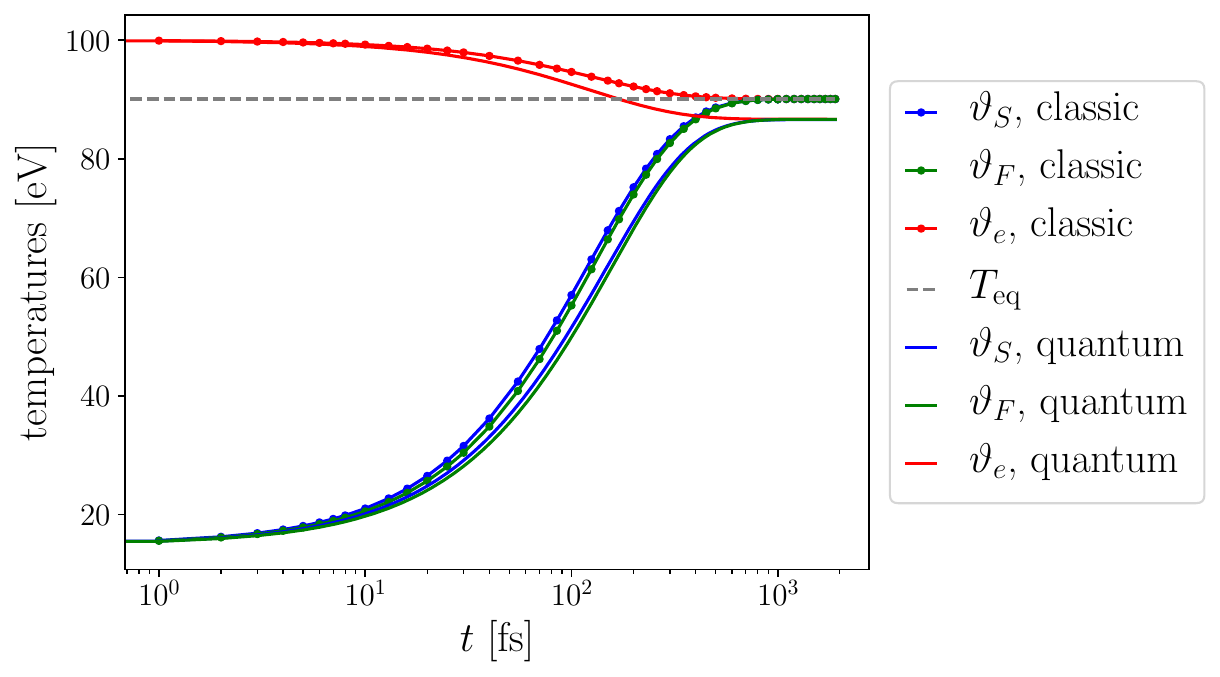}

    \caption{Evolution of the physical temperatures for the Sulfur-Fluorine-electrons quantum test case in Section \ref{test:SFe-quantum}. When both the ions and the electrons are treated classically (lines with dots), the physical temperatures (which coincide with the kinetic temperatures \eqref{eq:T_kinetic}) converge to the mixture temperature $T_{\rm eq}$ defined in \eqref{eq:T_eq}. When the electrons are treated like fermions instead, the physical temperatures do converge to a common value as predicted by the theory. However, this value differs from $T_{\rm eq}$.}
    \label{fig:SFe}
\end{figure}


\subsection{Sod problem} \label{test:Sod}
We run a quantum-kinetic version of the well-known Sod problem \cite{Sod} in the fluid regime for fermions. 
As carried out in \cite{FilbetHuJin}, the limiting equations for the kinetic equations in the fluid regime are the quantum Euler equations.

We implement a single-species test case with the multi-species model by assuming $m_1=m_2=m$, $n_1=n_2=n$, $U_1=U_2=U$ and $T_1=T_2=T$. We set $m=1$ and use $\tilde{\nu}_{kj}=2\cdot 10^4$ for approaching the fluid regime.
The initial data is given by $f_1 = f_2 = \mathcal{M}[n,u,T,m]$
where $\mathcal{M}$ is defined in \eqref{eq:Maxwellian} with 
\begin{align*}
n = 1, 
\qquad U = 0,
\qquad T = 1,
\end{align*}
for $x \leq 0$ and
\begin{align*}
n= 0.125, 
\qquad U = 0,
\qquad T = 0.8
\end{align*}
for $x>0$.

The simulations are run using a velocity grid with $48^3$ points and 300 equally spaced cells in $x$.  We use the second-order IMEX Runge-Kutta scheme from Section \ref{subsec:secondorderIMEX} combined with the second-order finite volume scheme from Section \ref{sec:space}.

Numerical results of the macroscopic quantities are given in Figure \ref{fig:Sod}. The fluid limit is recovered fairly well by the density $n$, mean velocity $u$ and kinetic temperature $T$. We see again that the physical temperature $\vartheta$ deviates from the kinetic temperature.

\begin{figure}[htb]
\centering
\begin{subfigure}[c]{0.45\textwidth}
\includegraphics[width=\textwidth]{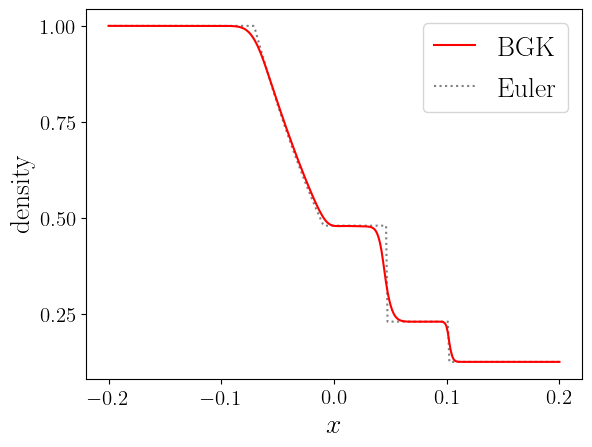}
\caption{density}
\end{subfigure}
~
\begin{subfigure}[c]{0.45\textwidth}
\includegraphics[width=\textwidth]{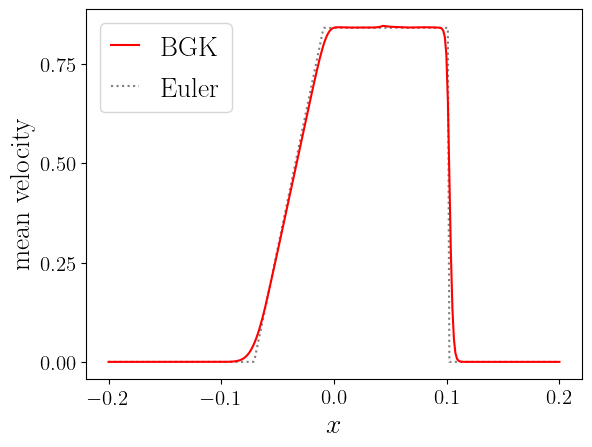}
\caption{mean velocity}
\end{subfigure}
\vskip10pt
\begin{subfigure}[c]{0.45\textwidth}
\includegraphics[width=\textwidth]{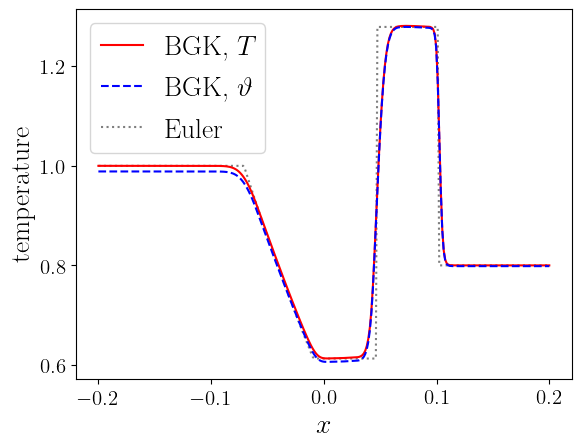}
\caption{temperature}
\end{subfigure}
~
    \caption{Numerical solution  at $t=0.055$ of the Sod problem in Section \ref{test:Sod}.   We show results for a 2-species kinetic simulation for fermions. The solutions for both species are identical; we show only the species 1 results.  For reference, the exact solution for the quantum Euler equations is also provided (dotted gray line).  The kinetic solution recovers the fluid limit fairly well. }
    \label{fig:Sod}
\end{figure}

\par{\bf Appendix.}\, 

\setcounter{section}{6}
\setcounter{subsection}{0}

\subsection{Extension to N-Species}
The two-species model can be extended to a system of $N$-species that undergo binary interactions. For ease in notation, we illustrate here the 3-species case. Each distribution function $f_k$, $k=1,\dots,3$, represents the solution to
\begin{align*}
    \partial_t f_k + \frac{p}{m_k}\cdot \nabla_x f_k = \tilde{\nu}_{k1}(\mathcal{K}_{k1}-f_k) + \tilde{\nu}_{k2}(\mathcal{K}_{k2}-f_k)+ \tilde{\nu}_{k3} (\mathcal{K}_{k3}-f_k)
\end{align*}
with $\tilde{\nu}_{kj} = \nu_{kj}n_j$.
Since we still consider only binary interactions, the properties in section \ref{sec:model}  are still satisfied.
\\

The presented numerical scheme is based on the general implicit solver in Section \ref{sec:generaltime}. Since the transport operators act only on the individual species, we focus on and shortly illustrate the scheme of the relaxation process.

As above, we write the implicit updates of the distribution functions in a generic steady state form
\begin{align} \label{eq:update_general_3}
f_k  = d_k G_k  + d_k \gamma\Delta t  (\tilde{\nu}_{kk} \E_{kk,\tau_{k}}  + \tilde{\nu}_{kj} \E_{kj,\tau_{j}} + \tilde{\nu}_{kl} \E_{kl,\tau_{l}})
\end{align}
for $k,j,l\in\{1,2,3\}$, each of $k,j,l$ distinct, where $\E_{kk,\tau_{k}}$, $\E_{kj,\tau_{j}}$ and $\E_{kl,\tau_{l}}$ are the unique attractors associated to $f_k$,
\begin{equation*}
    d_k = \frac{1}{1+\gamma\Delta t  (\tilde{\nu}_{kk} + \tilde{\nu}_{kj} +\tilde{\nu}_{kl})},
\end{equation*}
and $G_k$ is a known function.
When we can express $\E_{kk} $,  $\E_{kj} $ and $\E_{kl}$ as functions of $G_k$, $G_j$ and $G_l$, \eqref{eq:update_general_3} provides an explicit update formula for $f_k$.

We apply the conservation properties to \eqref{eq:update_general_3}. An analogous calculation as in the 2-species case leads to a set of constraints to determine  the attractors from the given data:
\begin{align} \label{eq:moments_A_3}
\begin{split}
&\int d_1  \left(\tilde{\nu}_{11} \E_{11,\tau_{1}}+\tilde{\nu}_{12} \E_{12,\tau_{1}} + \tilde{\nu}_{13} \E_{13,\tau_{1}} \right) \mbp_1  dp \\&+ \int d_2   \left(\tilde{\nu}_{21} \E_{21,\tau_{2}}+\tilde{\nu}_{22} \E_{22,\tau_{2}} + \tilde{\nu}_{23} \E_{23,\tau_{2}}\right) \mbp_2dp \\
+  &\int d_3   \left(\tilde{\nu}_{31} \E_{31,\tau_{3}}+\tilde{\nu}_{32} \E_{32,\tau_{3}} + \tilde{\nu}_{33} \E_{33,\tau_{3}}\right) \mbp_3dp \\ =
&\int  d_1  \left(\tilde{\nu}_{11} +\tilde{\nu}_{12} + \tilde{\nu}_{13} \right) G_1  \mbp_1 dp
+ \int  d_2  \left(\tilde{\nu}_{21} + \tilde{\nu}_{22} + \tilde{\nu}_{23}\right) G_{2}  \mbp_2 dp \\&+ \int  d_3  \left(\tilde{\nu}_{31} + \tilde{\nu}_{32} + \tilde{\nu}_{33} \right) G_{3}  \mbp_3 dp.
\end{split}
\end{align}
These constraints \eqref{eq:moments_A_3} represent first-order optimality conditions associated to the minimization of the  convex function
\begin{align*}
\varphi_{\rm tot}(\alpha_1,\alpha_2,\alpha_3,\alpha_{12},\alpha_{13},\alpha_{23})  = \varphi_1(\alpha_1) + \varphi_2(\alpha_2) + \varphi_3(\alpha_3) + \varphi(\alpha_{12}) + \varphi(\alpha_{13}) + \varphi(\alpha_{23})
\end{align*}
with
\begin{align*}
\varphi_k(\alpha_k) &= \int d_k \tilde{\nu}_{kk} w[\E_{kk,\tau_{k}}] dp + \mu_{kk}\cdot \alpha_k
\end{align*}
and
\begin{align*}
\varphi(\alpha_{kj}) &= \int \left(  d_k \tilde{\nu}_{kj} w[\E_{kj,\tau_{k}}] +  d_j \tilde{\nu}_{jk} w[\E_{jk,\tau_{j}}] \right) dp + \mu_{kj} \cdot \alpha_{kj},
\end{align*}
where
\begin{align*}
\textcolor{black}{ w[\E_{kj,\tau_{k}}] 
=
\begin{cases}
	-\E_{kj,\tau_{k}} \qquad  &\text{for } \tau_{k} = 0, \\
	 \log(1-\E_{kj,+1})\quad &\text{for } \tau_{k} =  +1, \\
	 -\log(1+\E_{kj,-1})\quad &\text{for } \tau_{k} =  -1.
\end{cases} }
\end{align*}
Moreover, $\alpha_k = (\alpha_k^0,\alpha_k^1,\alpha_k^2)^\top$;
\begin{align*}
\mu_{kk} =
\begin{pmatrix}
\mu_{kk}^0 \\ \mu_{kk}^1 \\ \mu_{kk}^2
\end{pmatrix} =
\int d_k \tilde{\nu}_{kk}  G_k \mbp_k dp
\end{align*}
for $k=1,2,3$; for $k\neq j:$ $\alpha_{kj} = (\alpha_{kj}^0,\alpha_{jk}^0,\alpha_{kj}^1,\alpha_{kj}^2)^\top $; and
\begin{align*}
\mu_{kj} =
\begin{pmatrix}
\mu_{kj}^0 \\ \mu_{jk}^0 \\ \mu_{kj}^1 \\ \mu_{kj}^2
\end{pmatrix} &= \int \left[
\begin{pmatrix}
1 \\ 0 \\ p\\ \frac{|p|^2}{2m_k}
\end{pmatrix}
 d_k \nu_{kj}  G_k +
\begin{pmatrix}
0 \\ 1 \\ p \\ \frac{|p|^2}{2m_j}
\end{pmatrix}
 d_j \nu_{jk}  G_j  \right] dp.
\end{align*}

\subsection{Entropy minimization problem of the mixture equilibria}
\label{sec2}
In this section, \textcolor{black}{we prove that the local equilibria in \eqref{parameters} are the unique entropy minimizing solution. We note that we choose the same equilibrium parameter $(b,c)$ in $\mathcal{K}_{12}$ and $\mathcal{K}_{21}$. This is also reflected in the choice of the equilibrium parameters in our numerical scheme.} 
For this, it will be convenient to define the following notations. We denote
\begin{align*}
\begin{split}
 \E_{11} 
    = (e^{-\alpha_{11}\cdot \mathbf{p}_1}+\tau)^{-1}, \quad  \E_{22} 
    = (e^{-\alpha_{22}\cdot \mathbf{p}_2}+\tau')^{-1}, \\ \E_{12} 
    = (e^{-\alpha_{12}\cdot \mathbf{p}_1}+\tau)^{-1}, \quad \E_{12} 
    = (e^{-\alpha_{21}\cdot \mathbf{p}_2}+\tau')^{-1}
    \end{split}
    \end{align*}
where
\begin{align*}
    \mathbf{p}_k(p) := (1,p, \frac{|p|^2}{2m_k})^\top, \quad k=1,2
\end{align*}
and the parameters $(a_{1},a_2, a,b_{1}, b_2, b,c_1, c_2, c_{12}, c_{21})$ can be mapped one-to-one to $\alpha_{kj}=(\textcolor{black}{\alpha^0_{kj}},\alpha^1_{kj}, \alpha^2_{kj}), ~k,j=1,2$. 
As the function \textcolor{black}{ $h_{\tau}$ defined by \eqref{entropy_h} is convex, it follows
\begin{equation}
\label{eq:h_tangent}
h_{\tau}(z) \geq h_{\tau}(y) + \ln(y)(z-y), 
\end{equation}}
for all $y,z>0$ if $\tau=0,-1$ and $0<y,z<1$ if $\tau=+1$.

For interactions between different species, we seek a solution of the entropy minimization problem
\begin{align}
\label{eq:entropy_min_cross_terms}
\min_{g_{1}, g_{2} \in \chi_{12}}  \int  h_{\tau}(g_{1}) dp + \int h_{\tau'}(g_{2}) dp ,
\end{align}
where
\begin{align}
\label{eq:chi12_def}
\begin{split}
\chi_{12}= \Bigg\lbrace (g_{1}, g_{2})~\Big|& ~ g_{1}, g_{2} >0,\,
 (1+ |p|^2) g_{1},\,
 (1+ |p|^2) g_{2}  \in L^1(\mathbb{R}^3),\\
&\int  g_{1} dp = \int  f_1 dp, \quad
\int   g_{2} dp = \int f_2 dp, \\
&\int
\begin{pmatrix}
 p \\  \frac{|p|^2}{2m_1}
\end{pmatrix} (g_{1} - f_1) dp  +
\int \begin{pmatrix}
 p \\ \frac{|p|^2}{2 m_2}
\end{pmatrix} (g_{2} - f_2) dp = 0 \Bigg\rbrace.
\end{split}
\end{align}
Here, $\chi_{12}$ is chosen such that the constraints \eqref{conserv 2} for inter-species collisions are satisfied.
Similar to the case of intra-species collisions, we consider the Lagrange functional $L \colon \chi_{12} \times \mathbb{R} \times \mathbb{R} \times \mathbb{R}^3 \times \mathbb{R} \to \mathbb{R} $
\begin{align*}
\begin{split}
L(g_1, g_2, \alpha_{12}^0,\alpha_{21}^0, \alpha^1, \alpha^2) &= \int  h(g_{1}) dp + \int  h(g_{2}) dp \\
&+\alpha_{12}^0 \int  (g_{1} - f_1 )dp
+ \alpha_{21}^0 \int  (g_{2} - f_2 )dp \\
&+ \alpha^1 \cdot \left( \int   p (g_1 - f_1) dp + \int p (g_2 - f_2) dp \right)\\
&+\alpha^2 \left( \int \frac{  |p|^2}{2m_1} (g_1 - f_1) dp +  \int \frac{  |p|^2 }{2 m_2} (g_2 - f_2) dp \right).\\
\end{split}
\end{align*}%
Any critical point $(\mathcal{K}_{12}, \mathcal{K}_{21}, \lambda_{12}^0,\lambda_{21}^0, \lambda^1, \lambda^2)$ of $L$ satisfies the first-order optimality conditions
\begin{align*}
\frac{\delta L}{\delta g_1}(\mathcal{K}_{12}, \mathcal{K}_{21}, \lambda^0_{12},\lambda^0_{21}, \lambda^1, \lambda^2)
 =   \ln \frac{\mathcal{K}_{12}}{1-\tau \mathcal{K}_{12}} + \lambda_{12} \cdot \mathbf{p}_1(p)  =0, \\
 \frac{\delta L}{\delta g_2}(\mathcal{K}_{12}, \mathcal{K}_{21}, \lambda^0_{12},\lambda^0_{21}, \lambda^1, \lambda^2)
 =   \ln \frac{\mathcal{K}_{21}}{1-\tau \mathcal{K}_{21}} + \lambda_{21} \cdot \mathbf{p}_2(p)   =0,
\end{align*}
where $\lambda_{12} = \textcolor{black}{(\lambda^0_{12}, \lambda^1, \lambda^2)}$ and $\lambda_{21} = \textcolor{black}{(\lambda^0_{21}, \lambda^1, \lambda^2)}$.  Therefore
\begin{align} \label{form1}
\begin{split}
\mathcal{K}_{12} &= (\exp(\lambda_{12} \cdot \mathbf{p}_1(p))+\tau)^{-1}, 
\cr
\mathcal{K}_{21} &= (\exp(\lambda_{21} \cdot \mathbf{p}_2(p))+ \tau')^{-1}. 
\end{split}
\end{align}
We highlight again that we only require conservation of the combined momentum and kinetic energy so that there is only one Lagrange multiplier for the momentum constraint and one Lagrange multiplier for the energy constraint. Hence, $\lambda^1_{12} = \lambda^1_{21}$ and $\lambda^2_{12} = \lambda^2_{21}$ in \eqref{form1}.  When we are in the classical case ($\tau=0$), this restriction is the same as the one used in \cite{HHM}, but more restrictive than the model in \cite{Pirner}.

Theorem 2.1 in \cite{Quantum} shows the existence of functions of the form \eqref{form1} which satisfy the constraints in \eqref{conserv 1} and \eqref{conserv 2}. It follows that these functions are unique minimizer of the corresponding minimization problem.

\begin{theorem}\label{thm2.2}
  $(\mathcal{K}_{12},\mathcal{K}_{21})$ as defined in \eqref{form1} is the unique minimizer of \eqref{eq:entropy_min_cross_terms}.
\end{theorem}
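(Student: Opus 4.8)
The plan is to run the standard convex-duality argument for entropy minimization, adapted to the fact that the feasible set $\chi_{12}$ couples the two species only through the \emph{combined} momentum and energy constraints. First I would record that the candidate $(\mathcal{K}_{12},\mathcal{K}_{21})$ of the form \eqref{form1} is itself feasible: by Theorem 2.1 in \cite{Quantum} the multipliers $(\lambda_{12},\lambda_{21})$ can be chosen so that $\int \mathcal{K}_{12}\,dp = \int f_1\,dp$, $\int \mathcal{K}_{21}\,dp = \int f_2\,dp$, and the combined momentum/energy constraints in \eqref{eq:chi12_def} hold, so that $(\mathcal{K}_{12},\mathcal{K}_{21})\in\chi_{12}$.

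Next, for an arbitrary competitor $(g_1,g_2)\in\chi_{12}$, I would invoke the tangent-line inequality coming from convexity of $h_\tau$ (cf. \eqref{eq:h_tangent}), applied at $\mathcal{K}_{12}$ and $\mathcal{K}_{21}$:
\begin{align*}
h_\tau(g_1) \ge h_\tau(\mathcal{K}_{12}) + h_\tau'(\mathcal{K}_{12})(g_1-\mathcal{K}_{12}), \quad
h_{\tau'}(g_2) \ge h_{\tau'}(\mathcal{K}_{21}) + h_{\tau'}'(\mathcal{K}_{21})(g_2-\mathcal{K}_{21}),
\end{align*}
where $h_\tau'(z)=\ln\frac{z}{1-\tau z}$. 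I would then integrate in $p$ (the $(1+|p|^2)$-weighted $L^1$ conditions in $\chi_{12}$ together with the growth of $h_\tau$ justify the integrals) and add the two inequalities.

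The heart of the argument is to show that the first-order term vanishes. Using the optimality conditions in \eqref{form1}, namely $h_\tau'(\mathcal{K}_{12}) = -\lambda_{12}\cdot\mathbf{p}_1$ and $h_{\tau'}'(\mathcal{K}_{21}) = -\lambda_{21}\cdot\mathbf{p}_2$ with $\lambda_{12}=(\lambda_{12}^0,\lambda^1,\lambda^2)$ and $\lambda_{21}=(\lambda_{21}^0,\lambda^1,\lambda^2)$ sharing the momentum and energy multipliers, the summed linear term reorganizes into $\lambda_{12}^0\int(g_1-\mathcal{K}_{12})\,dp$, $\lambda_{21}^0\int(g_2-\mathcal{K}_{21})\,dp$, and $\lambda^1,\lambda^2$ contracted against the \emph{combined} momentum and energy differences. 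Since both $(g_1,g_2)$ and $(\mathcal{K}_{12},\mathcal{K}_{21})$ lie in $\chi_{12}$, each of these five moment differences is zero, so the entire linear term drops out and I would conclude $\int h_\tau(g_1)+h_{\tau'}(g_2)\,dp \ge \int h_\tau(\mathcal{K}_{12})+h_{\tau'}(\mathcal{K}_{21})\,dp$, i.e.\ $(\mathcal{K}_{12},\mathcal{K}_{21})$ is a minimizer. For uniqueness I would use that $h_\tau$ is strictly convex on its domain, so the tangent-line inequalities are strict wherever $g_1\neq\mathcal{K}_{12}$ or $g_2\neq\mathcal{K}_{21}$; equality in the minimum therefore forces $g_1=\mathcal{K}_{12}$ and $g_2=\mathcal{K}_{21}$ almost everywhere.

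I expect the main obstacle to be the bookkeeping in this cancellation step: because $\chi_{12}$ imposes only the summed momentum and energy constraints (with a single pair $\lambda^1,\lambda^2$ of multipliers shared across species), one must verify that the two species' linear terms recombine \emph{exactly} into the combined-constraint differences rather than into the individual species moments, which do not separately vanish. The mass multipliers $\lambda_{12}^0,\lambda_{21}^0$ remain decoupled and pair with the separate mass constraints, so care is needed to keep the zeroth-order and higher-order contributions organized correctly. A secondary technical point is justifying the integrability needed to pass from the pointwise tangent-line inequality to the integrated one, which follows from the weighted $L^1$ membership built into $\chi_{12}$.
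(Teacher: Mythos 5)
Your proposal is correct and follows essentially the same route as the paper's own proof: the tangent-line inequality from convexity of $h_\tau$ applied at $(\mathcal{K}_{12},\mathcal{K}_{21})$, regrouping the linear terms via the shared multipliers $\lambda^1,\lambda^2$ so that they pair exactly with the mass and combined momentum/energy constraints of $\chi_{12}$ and vanish, then strict convexity for uniqueness. Your added remarks on feasibility of the candidate (via Theorem 2.1 of the cited work) and on the integrability justification are minor refinements of points the paper leaves implicit.
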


\begin{proof}
	According to \eqref{eq:h_tangent}
	\begin{equation*}
	h_{\tau}(g)
	\geq  h_{\tau_k}(\mathcal{K}_{kj}) + \lambda_{kj} \cdot \mathbf{p}_k (g-\mathcal{K}_{kj}),
	\end{equation*}
	point-wise in $p$, for any measurable function $g$ and $k,j \in \{1,2\}$.
	Therefore it follows that for any measureable functions $g_1$ and $g_2$,
	\begin{align}
	\label{eq:convex_h1h2}
	\int h_{\tau}(g_1) dp + \int & h_{\tau'}(g_2)  dp
		\geq  \int h_{\tau}(\mathcal{K}_{12}) dp
			+ \int  h_{\tau'}(\mathcal{K}_{21})dp  \nonumber \\
			& \quad + \lambda_{12} \cdot\int  \mathbf{p}_1 (g_1-\mathcal{K}_{12})dp
			+ \lambda_{21} \cdot \int   \mathbf{p}_2 (g_2-\mathcal{K}_{21})dp.
	\end{align}
	Since $\lambda^1_{12}=\lambda^1_{21}$ and $\lambda^2_{12}=\lambda^2_{21}$,
	\begin{align*}
\begin{split}
	\lambda_{12} \cdot\int  \mathbf{p}_1& (g_1-\mathcal{K}_{12}) dp
		+ \lambda_{21} \cdot \int  \mathbf{p}_2 (g_2-\mathcal{K}_{21})dp 
	= \lambda^0_{12} \int  (g_1-\E_{12})dp
		+ \lambda^0_{21} \int (g_2-\mathcal{K}_{21})dp \\
		& \quad + \lambda^1_{12} \cdot \left(\int p (g_1-\mathcal{K}_{12}) dp + \int p (g_2-\mathcal{K}_{21}) dp \right)\\& + \lambda^2_{12} \left(\int \frac{|p|^2}{2 m_1} (g_1-\mathcal{K}_{12}) dp
		  + \int  \frac{ |p|^2}{2 m_2} (g_2-\mathcal{K}_{21}) dp \right) .
\end{split}
	\end{align*}
	If $(g_1,g_2)$ and $(\mathcal{K}_{12},\mathcal{K}_{21})$ are elements of  $\chi_{12}$, then the constraints in \eqref{eq:chi12_def} imply that each of the terms above is zero.
	In such cases, \eqref{eq:convex_h1h2} reduces to
	\begin{align*}
	\int h_{\tau}(g_1) dp + \int h_{\tau'}(g_2)  dp
	\geq  \int  h_{\tau}(\mathcal{K}_{12}) dp
	+ \int  h_{\tau'}(\mathcal{K}_{21})dp,
	\end{align*}
	which shows that $(\mathcal{K}_{12},\mathcal{K}_{21})$ solves \eqref{eq:entropy_min_cross_terms}.  Since  $h_{\tau_k}$ is strictly convex, it follows that this solution is unique.

\end{proof}

\vskip2mm
\section*{Acknowledgements}
\textcolor{black}{Gi-Chan Bae is supported by the National Research Foundation of Korea (NRF-2021R1C1C2094843), and}
Marlies Pirner was funded by the Deutsche Forschungsgemeinschaft (DFG, German Research Foundation) under Germany's Excellence Strategy EXC 2044-390685587, Mathematics Münster: Dynamics–Geometry–Structure, by the Alexander von Humboldt foundation and the German Science Foundation DFG (grant no. PI 1501/2-1).

\par{\bf References.}\,

          \end{document}